\newtheorem{thm}{Theorem}[section]
\newtheorem*{thm*}{Theorem}
\newtheorem*{theoremain}{Main Theorem}
\newtheorem{lemma}[thm]{Lemma}
\newtheorem{cor}[thm]{Corollary}
\newtheorem{rmk}[thm]{Remark}
\newtheorem{prop}[thm]{Proposition}
\newtheorem{definition}[thm]{Definition}
\newtheorem{conj}[thm]{Conjecture}
\numberwithin{equation}{section}
\newtheorem*{question}{Question}
\newcommand{\tw}{\mathcal{B}}
\newcommand{\twc}{\mathcal{T}}
\newcommand{\n}{\vec{\textbf{n}}}
\newcommand{\tpn}{t_p^{\n}}
\newcommand{\spn}{\mathcal{S}_{\n,p}}
\newcommand{\Bir}{\mathcal{O}}
\newcommand{\f}{\mathcal{F}}
\newcommand{\col}{\mathrm{col}}
\newcommand{\one}{{\textbf{1}}}
\newcommand{\toral}{\mathcal{R}_{\mathbb{T}^d}}
\title{Nondegeneracy of the spectrum of the twisted cocycle for interval exchange transformations}
\author{Hesam Rajabzadeh and Pedram Safaee}
\date{September 2023}
\begin{document}

\maketitle
\begin{abstract}
    We prove the positivity of the top Lyapunov exponent of the twisted (spectral) cocycle, associated with IETs, with respect to a family of natural invariant measures. The proof relies on relating the top exponent to limits of exponents along families of affine invariant submanifolds of genus tending to infinity. Applications include an observation about a conjecture of Kontsevich and Zorich, a discrepancy estimate, and a formula for the lower local dimension of spectral measures. 
\end{abstract}

\tableofcontents{}

\section{Introduction}\label{intro}
An interval exchange transformation (IET) is a map $T:[0,1] \to [0,1]$ of the interval to itself for which the interval admits a finite partition by subintervals over each of which $T$ acts as translation in such a way that the image of the interior of the intervals is disjoint from one another. These family of maps arise naturally as the return maps of directional flows on translation surfaces.

The ergodic theory of IETs and translation flows has been studied extensively by many authors (see for instance \cite{KatokStepinWeak, KeaneMinimal, Katoknomixing, MasurIETMeasuredFoliations, VeechGaussmeasure, Zorich97deviation, ForniDeviation, AvilaForniweak, AthreyaForni, Chaika-Annals, Hubert-Ferenczi-2019}). Some of the main highlights of this theory 
include the typicality of properties such as minimality, unique ergodicity, rigidity, and quantitative weak mixing among these families of dynamical 
systems. The main method to establish the mentioned properties is the study 
of the associated renormalization dynamics. To this end, one generally uses associated linear cocycles over the renormalization operators and recasts the properties of individual systems in terms of those cocycles.

The Kontsevich-Zorich cocycle, introduced in the work of Kontsevich and Zorich, is a linear cocycle defined over the Teichmüller flow on the moduli space of abelian differentials. A discrete-time version of this cocycle had been previously introduced by 
Zorich, in an attempt to understand, among other things, the growth rates of Birkhoff sums of observables over typical IETs (see \cite{Zorich97deviation}). The Zorich cocycle is an accelerated version of the Rauzy-Veech cocycle (see \cite{rauzy1979echanges, VeechGaussmeasure}). Study of the Lyapunov spectra of these cocycles has remarkable implications for the dynamics of typical IETs and translation flows. It is well-known 
the cocycles mentioned above preserve a symplectic structure, thereby having symmetric spectra with respect to zero. The Kontsevich-Zorich conjecture asserts that the Lyapunov spectra of the restricted Kontsevich-Zorich cocycle, and consequently that of the restricted Zorich cocycle, are simple and nonzero \cite{kontsevich-Zorich}. This conjecture also provides precise asymptotics for Birkhoff sums in terms of the Lyapunov spectrum (see \cite[Theorem 9.6]{ForniDeviation}). While Giovanni Forni provided, along with many other important things, a partial 
resolution of this conjecture \cite{ForniDeviation}, a complete proof of it, using different methods, was given by Avila and Viana in \cite{AvilaVianaSimplicity}. For the mathematical definitions, see Section \ref{sec: Prelim} and the references therein.

In this paper, we study the spectrum of a linear cocycle introduced by Bufetov and Solomyak, known as the twisted (or spectral) cocycle. This cocycle is designed so that its iterations control the exponential Birkhoff sums of locally constant functions, analogous to the fact that the iterations of the Zorich cocycle govern the behavior of ordinary Birkhoff sums of such functions. This cocycle and its cousins have appeared in several papers studying rates of weak mixing and the spectral measures associated with IETs and substitution dynamical systems (see for instance \cite{BufetovSolomyakHoldergenustwo, ForniTwisted, BufetovSolomyak-HolderRegularity, BufetovSolomyak-selfsimilarity, marshallmaldonado2020modulus, marshallmaldonado2022lyapunov, AvilaForniSafaee}).

To put things in perspective, let $d$ be a positive integer and $\Delta:=\{\lambda \in \mathbb{R}_{+}^d: \|\lambda\|_1=1 \}$, where $\|.\|_{1}$ denotes the $L^1$-norm, be the simplex that parameterizes the length vectors of $d$-IETs. Let $\pi$ be an irreducible permutation on $d$-symbols and $\mathfrak{R}$ its corresponding Rauzy diagram. The Zorich transformation (renormalization) denoted by $\mathcal{R}_{Z}: \Delta \times \mathfrak{R}\to  \Delta \times \mathfrak{R}$ preserves an invariant probability measure, which we call $\mu_Z$. We denote the Zorich cocycle defined over the Zorich transformation by $(\mathcal{R}_{Z}, B^{Z})$. For every $(\lambda, \pi)$, $B^{Z}(\lambda, \pi)$, being an element of $ SL(d, \mathbb{Z})$, induces an automorphism of $\mathbb{T}^d$ which brings us to the definition of the toral Zorich cocycle $\mathcal{R}^{Z}_{\mathbb{T}^d}: \Delta \times \mathfrak{R} \times \mathbb{T}^d \to \Delta \times \mathfrak{R} \times \mathbb{T}^d$ by
\begin{equation}
    \toral^Z(\lambda,\pi, \zeta):= (\mathcal{R}_{Z}(\lambda, \pi), B^Z(\lambda, \pi)\zeta).
\end{equation} 

The main object of study in this paper is the twisted Zorich cocycle, which is a matrix cocycle over $\toral^Z$. Similar to the definition of (ordinary) Zorich cocycle, we associate to every $(\lambda,\pi,\zeta)\in \Delta\times \mathfrak{R}\times \mathbb{T}^d$, a matrix $\tw^{Z}(\lambda, \pi, \zeta)\in GL(d,\mathbb{C})$ with determinant of unit absolute value. In case $\zeta= 0 \in \mathbb{R}^d/\mathbb{Z}^d$, the definition of $\tw^Z(\lambda,\pi,\zeta)$ reduces to that of the ordinary Zorich matrix $B^Z(\lambda,\pi)$ (see Definition \ref{def:twisted-cocycle}). Our main theorem concerns the positivity of the top Lyapunov exponent of this cocycle with respect to a pair of natural invariant probability measures of the base dynamics $\toral^{Z}$, which project onto $\mu_Z$. 
One is the product measure  $\mu_Z \times m_{\mathbb{T}^d}$, where $m_{\mathbb{T}^d}$ is the normalized Lebesgue measure on $\mathbb{T}^d$. The other one is $\sum_{\pi \in \mathfrak{R}}\mu_{Z}\big\vert_{\Delta_{\pi}} \times m_{\mathbb{T}^{2g}_{\pi}}$ whose disintegration along the fibers is supported on $\mathbb{T}^{2g}_{\pi}$ which are $2g$-dimensional subtori of $\mathbb{T}^d$ (see Section \ref{sec: Twisted Cocycle}). We now turn to the statement of our
\begin{theoremain}\label{main-thm}
Let $\pi$, $\mathfrak{R}$, and $\mu_{Z}$ be as before. If $g>1$, then the top Lyapunov exponent of the twisted cocycle is positive with respect to both invariant measures $\mu_{Z} \times m_{\mathbb{T}^d}$ and $\sum_{\pi \in \mathfrak{R}}\mu_{Z}\big\vert_{\Delta_{\pi}} \times m_{\mathbb{T}^{2g}_{\pi}}$.
\end{theoremain}
While the similar statement for the ordinary Zorich cocycle is trivial, this result has remained unknown since the introduction of the twisted cocycle in the early 2010s. It is worthwhile to mention that our proof yields an explicit lower bound for the top exponent in terms of the combinatorial data. 
We remark that in the case of genus $1$, all exponents turn out to be zero. It is also true that in higher genus the cocycle admits at least two zero exponents. The proofs of these claims, which will appear in an upcoming paper by the authors, are of a different flavor. On par with the Kontsevich-Zorich conjecture, the following question seems most natural.  
\begin{question}
    Determine whether the twisted cocycle admits a $2g-2$-dimensional invariant subbundle over which the Lyapunov spectrum is simple and nonzero.
\end{question}

The interest of the mathematical community in the nondegeneracy of the spectrum of cocycles goes far beyond the special case of cocycles over the space of interval exchange transformations, 
and the Teichmuller flow (see \cite{Herman,Wilkinson-What-are-Lyapunov,Brown-Fisher-Hurtado, Avila2023kam} and references therein). Other prominent examples include the invariance principle \cite{Furstenberg, Ledrappier,BonattiVianaGomezMont,invariance-principle, AvilaVianaSantamaria}, the genericity of nonvanishing of Lyapunov spectrum for cocycles over sufficiently chaotic base dynamics \cite{VianaNonvanishing_of_exponents}, and the 
positivity of the Lyapunov exponent for certain dynamically defined Schrödinger operators \cite{Avila-Damanik-Zhang}. While the main pillar of the aforementioned papers is a dexterous application of the invariance principle, we 
use here a different approach that may be applicable in broader contexts.

The paper is organized as follows.  Section \ref{sec: Prelim} is comprised of preliminary definitions and notations regarding interval exchange transformations, translation surfaces, renormalization schemes, and some basics of cocycle dynamics. Section \ref{sec: Twisted Cocycle} deals with toral and twisted 
cocycles. In particular, it defines these cocycles, and provides a multifaceted study of certain invariant measures of the toral cocycle and their ergodic properties. Moreover, a control for 
the growth of exponential Birkhoff sums of locally constant functions is presented in terms of the growth of the corresponding vectors under the twisted cocycle. Section \ref{sec: IETs with positive exponent} is 
devoted to the construction of some tools for the proof of Main Theorem presented in Section \ref{sec:proof-main-theorm}. The idea of the proof 
is to approximate $\mu_Z\times m_{\mathbb{T}^d}$ by measures $\mu_Z\times \nu_p$ with atomic disintegration along the fibers and using semi-continuity of the top Lyapunov exponent with respect to $\toral^Z$ invariant measures. Then, the main difficulty is to obtain a uniform lower bound for the top exponent of the above-mentioned measures. A sketch of the proof of this fact is provided below. 

Given a prime number $p$ and a vector $\n \in \mathbb{Z}^d$, to simplify the study of twisted Birkhoff sums with respect to the twist parameters $\zeta_k=\frac{k}{p} \n$, an untwisting mechanism 
is devised that allows to realize the twisted Birkhoff sums with twist factors $\zeta_k$ of locally constant functions along a subsequence of times as ordinary Birkhoff sums for a suspended system $\spn(\lambda, \pi)$ (see Definition \ref{Stopping time}, and Lemma \ref{Small kernel surjection}). A translation surface $X$ is constructed in such a way that $\spn(\lambda, \pi)$'s are return maps to a fixed transversal of translation flows on $X$ (see Subsection \ref{translation-surface}). An analysis of the genus and order of singularities of $X$ 
(see Lemma \ref{genus bound}, and Lemma \ref{Order of singularities}), along with an application of \cite[Theorem 1]{Eskin-Kontsevich-Zorich} to the $SL(2, \mathbb{R})$ invariant closure of $X$, yields a uniform (independent of $p$) lower bound 
for the mean of the Lyapunov exponents of the Kontsevich-Zorich cocycle. This fact, combined with the comparison of the growth rate of exponential Birkhoff sums of locally constant functions with the 
growth rate of the corresponding vectors under the twisted cocycle (see Proposition \ref{exponent-comparison}), provides a uniform lower bound for the pointwise exponent of the cocycle for a fixed portion of the points $\zeta_1, \ldots, \zeta_{p-1}$ (see Proposition \ref{Sum of exponents} for more details). This proves the uniformity of the lower bound of the top exponent of the twisted cocycle with respect to $\mu_{Z}\times \nu_p$.

Section \ref{sec: Applications} collects various applications of the main result, and the ideas developed in the previous sections. First, a surprising connection to one of the conjectures of Kontsevich-Zorich regarding high genus limits of second Lyapunov exponents (see Theorem \ref{Kontsevitch-Zorich-Conjecure}) is given. Second, the discrepancy of ergodic sums under IETs modulo prime numbers is defined and analyzed. Last, an application to the dimension of spectral measures is provided. 

\section*{Acknowledgements} We would like to express our deep gratitude to Giovanni Forni for suggesting this problem, his immense support and encouragement as well as many fruitful discussions during work on this project. We also thank Artur Avila and Carlos Matheus for helpful conversations.

\section{Preliminaries} \label{sec: Prelim}
We collect here a number of definitions and tools that we need in the forthcoming sections.
\subsection{Basics of cocycle dynamics}\label{basic-ergodic-theory}

Let $X$ be a separable second countable metric space. We fix the Borel $\sigma$-algebra on $X$ and measurable functions are defined with respect to this $\sigma$-algebra. For a measurable transformation $f:X\to X$, a point $x\in X$ is called {\it Birkhoff regular}, whenever for every compactly supported continuous observable $\varphi:X\to \mathbb{R}$, the following limit exists 
\begin{equation}\label{eq:Birkhoff-erg-thm}
\lim\limits_{n\to +\infty} \frac{1}{n}\sum\limits_{j=1}^n\varphi(f^j(x))   
\end{equation}
The Birkhoff ergodic theorem guarantees that the set of Birkhoff regular points has full measure with respect to all the $f$-invariant probability measures on $X$. Moreover, if the invariant measure $\mu$ is ergodic for $f$, then for every $\varphi\in L^1(X,\mu)$, the limit in \eqref{eq:Birkhoff-erg-thm} equals $\int_X \varphi {d}\mu$ for $\mu$-almost every $x\in X$. 

A linear cocycle over the base dynamics $f:X\to X$, is a map $F:X\times V\to X\times V $ of the form 
\begin{equation}
    F(x,v)=\big(f(x), A(x)v)
\end{equation}
where $V$ is a normed vector space and $A:X\to GL(V)$ is measurable. 

\begin{thm}(Oseledets multiplicative ergodic theorem) Let $f:X\to X$ be a measurable map, preserving invariant probability measure $\mu$, and $F:X\times V\to X\times V$ be a linear cocycle, as above, such that $\max\{\log\|A(x)\|,0\}\in L^1(X,\mu)$.\\ Then, there exist measurable functions $k=k(x)\in\mathbb{N}$, $\vartheta_k(x)<\vartheta_{k-1}<\cdots<\vartheta_1$, and linear subspaces 
\begin{equation}
\{0\}\subsetneq V_k(x)\subsetneq V_{k-1}(x)\subsetneq\cdots \subsetneq V_1(x)\subset V,    
\end{equation}
defined for $\mu$-almost every $x\in X$ and depending measurably on $x\in X$, satisfying the following properties 
\begin{itemize}
    \item[i)] For $\mu$-almost every $x\in X$, $k(x)=k(f(x))$ and for every $i\in \{1,2,\ldots,k\}$, $\vartheta_i(x)=\vartheta_i(f(x))$.
    \item[ii)] For $\mu$-almost every $x\in X$, and every $i\in \{1,2,\ldots,k\}$, $A(x)V_i(x)=V_i(f(x))$.
    \item[iii)] For every $v\in V_{i}(x)\setminus V_{i+1}(x)$, 
    \begin{equation}
        \lim\limits_{n\to +\infty}\frac{1}{n}\log\|A_n(x)v\|=\vartheta_i(x),
    \end{equation}
where, $A_n(x)=A(f^{n-1}(x))\cdots A(f(x))A(x)$. 
\end{itemize}
\end{thm}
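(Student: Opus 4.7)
The strategy is to combine Kingman's subadditive ergodic theorem with a linear-algebraic analysis of singular values, in the spirit of Raghunathan. By the ergodic decomposition, it suffices to treat the case where $\mu$ is $f$-ergodic; the measurable functions $k(x)$ and $\vartheta_i(x)$ then become constants on ergodic components, and the general statement is reassembled fiberwise. Throughout, I take $V$ finite-dimensional, as is the case for every cocycle appearing later in the paper.

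First I would apply Kingman's subadditive ergodic theorem to the sequence $\log\|A_n(x)\|$ and, more generally, to $\log\|\wedge^k A_n(x)\|$ for each $1\le k\le \dim V$. These are subadditive cocycles over $f$, and the hypothesis $\max\{\log\|A(x)\|,0\}\in L^1(\mu)$ together with $\log\|\wedge^k A_n\|\le k\log\|A_n\|$ provides the integrability needed to apply Kingman. One obtains deterministic limits $\Lambda_k$ with $\tfrac{1}{n}\log\|\wedge^k A_n(x)\|\to\Lambda_k$ almost everywhere. Since $\|\wedge^k A_n\|$ equals the product $\sigma_1(n)\cdots\sigma_k(n)$ of the top $k$ singular values of $A_n(x)$, the numbers $\vartheta_i:=\Lambda_i-\Lambda_{i-1}$ (with $\Lambda_0=0$) form a non-increasing list; collapsing coincidences yields the distinct candidate exponents $\vartheta_1>\vartheta_2>\cdots>\vartheta_k$ together with their multiplicities.

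The main work is to upgrade these scalar asymptotics to asymptotics of $\|A_n(x)v\|$ along an $A$-invariant filtration. The cleanest route is to show that the symmetric positive-definite matrices $(A_n(x)^\ast A_n(x))^{1/(2n)}$ converge almost everywhere to a positive-definite operator $\Lambda(x)$, whose eigenvalues are precisely the $e^{\vartheta_i(x)}$ with the multiplicities identified above. The subspaces $V_i(x)$ are then defined as the sums of eigenspaces of $\Lambda(x)$ corresponding to eigenvalues $\le e^{\vartheta_i(x)}$. I expect this convergence to be the principal obstacle: one must compare the singular vectors of $A_n$ and $A_{n+1}$, controlling their drift via the exterior-power rates $\Lambda_k$ together with a compactness argument on the relevant Grassmannians, and exclude the degenerate cases where two exponents are asymptotically equal but the associated singular subspaces fail to stabilize.

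Granted the existence of $\Lambda(x)$, the remaining assertions are essentially formal. For $v\in V_i(x)\setminus V_{i+1}(x)$ the identity $\|A_n(x)v\|^2=\langle A_n(x)^\ast A_n(x)v,v\rangle$ and diagonalization of the limit give $\lim \tfrac{1}{n}\log\|A_n(x)v\|=\vartheta_i(x)$, because $v$ has a nonzero component in the eigenspace of $\Lambda(x)$ with eigenvalue $e^{\vartheta_i(x)}$. The cocycle invariance $A(x)V_i(x)=V_i(f(x))$ then follows from the relation $A_{n+1}(x)=A_n(f(x))A(x)$ together with the uniqueness of the limit defining $\Lambda(f(x))$, and the same relation gives the $f$-invariance of $k(x)$ and of each $\vartheta_i(x)$. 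Measurability of $x\mapsto V_i(x)$, $k(x)$ and $\vartheta_i(x)$ is inherited from the almost-everywhere pointwise convergence that defines $\Lambda(x)$.
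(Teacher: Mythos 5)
The paper does not prove this statement: it is quoted as the classical Oseledets multiplicative ergodic theorem in the preliminaries, with the reader referred to the books of Viana--Oliveira and Viana for background, so there is no in-paper argument to compare yours against. On its own terms, your plan is the standard and correct route (Kingman applied to $\log\lVert\wedge^k A_n\rVert$, identification of the exponents from singular-value products, then the Raghunathan--Ruelle upgrade to convergence of $(A_n^\ast A_n)^{1/(2n)}$ and the eigenspace filtration), and your reduction to the ergodic, finite-dimensional case is appropriate --- note that finite-dimensionality is genuinely needed even though the paper's definition of a linear cocycle allows an arbitrary normed $V$.

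However, as written this is a proof plan rather than a proof: the step you yourself flag as ``the principal obstacle,'' namely the almost-everywhere convergence of $(A_n(x)^\ast A_n(x))^{1/(2n)}$, is where essentially all of the content of the theorem lives, and the one-sentence gesture toward comparing singular vectors of $A_n$ and $A_{n+1}$ with a Grassmannian compactness argument does not carry it out. Two smaller points to attend to if you complete the argument: (1) in item (iii), the lower bound $\liminf\frac1n\log\lVert A_n(x)v\rVert\ge\vartheta_i$ for $v$ with a nonzero component in the $e^{\vartheta_i}$-eigenspace requires the full operator convergence (so that the eigenspaces of $A_n^\ast A_n$ stabilize), not merely the convergence of the exponents $\Lambda_k$; Ruelle's Proposition 1.3 is the standard reference for this deduction. (2) Under the stated one-sided hypothesis $\log^+\lVert A\rVert\in L^1$ only, the bottom exponent $\vartheta_k$ may equal $-\infty$, a case your argument should either accommodate or explicitly exclude.
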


For the measure $\mu$, the functions 
$\vartheta_1(x),\ldots,\vartheta_k(x)$ in the Oseledets theorem are known as the {\it Lyapunov exponents} of the cocycle (with respect 
to $\mu$) and the set of points $x\in X$ for which items (i),(ii),(iii) of the theorem hold true, are called {\it Oseledets regular} with respect to $\mu$. For general background on ergodic theory, Lyapunov exponents, and cocycles see \cite{Viana-Oliveira, VianaLyapunovExponentsBook}.

\subsection{IETs and translation surfaces}

An \textit{interval exchange transformation} (IET) is a dynamical system that acts on an interval by reshuffling the elements of a 
finite partition of the interval into subintervals. More precisely, let $I=[0,1)$, $d>1$ an integer, $\mathcal{A}$ a set with $|\mathcal{A}|=d$, and $\pi:=(\pi_t, \pi_b)$ where $\pi_t, \pi_b: \mathcal{A} \to \{1, \ldots, d\}$ are bijections. Let $\Delta^{A}:= 
\{ \lambda \in \mathbb{R}_{+}^\mathcal{A}: \sum_{\alpha \in \mathcal{A}} \lambda_\alpha=1\} \subset \mathbb{R}_{+}^{\mathcal{A}}$ (or simply $\Delta$ when there is no 
ambiguity). Let \[I_{\alpha_{0}}:=
\Big[\sum\limits_{\pi_t(\alpha)<\pi_{t}(\alpha_0)}\lambda_{\alpha}, \sum\limits_{\pi_t(\alpha) \leq \pi_t(\alpha_0)} \lambda_\alpha\Big),\; \forall \alpha_0 \in \mathcal{A},\] where $\lambda \in \Delta^{\mathcal{A}}$. Then the interval exchange transformation 
associated with $(\lambda, \pi)$ is the map $T_{\lambda, \pi}: I \to I$ defined by 
\[T_{\lambda, \pi}(x)=x-\sum\limits_{\pi_t(\alpha)<\pi_t(\alpha_0)}\lambda_{\alpha}+\sum\limits_{\pi_b(\alpha)
<\pi_b(\alpha_0)}\lambda_{\alpha}, \quad x\in I_{\alpha_0},\]
where $\alpha_0 \in \mathcal{A}$. It is clear from the definition 
that there exists $\delta \in \mathbb{R}^{\mathcal{A}}$ such that  $T_{\lambda, \pi}(x)=x+\delta_\alpha$ for every $x\in I_\alpha$.

Let $X$ be a Riemann surface and $\Sigma \subset X$ be a finite set. A \textit{translation structure} $\omega$ on $(X, \Sigma)$ is an abelian differential $\omega$ with singularities in $\Sigma$. That is, for every $p \in X$ there exist an open set $U_p$ containing $p$ and holomorphic local coordinates $z: U_p \to \mathbb{C}$ such that
\begin{equation}
    \omega = z^{m(p)} dz,
\end{equation}
where $m=0$ for all $p \in X \setminus \Sigma$. A point $p\in \Sigma$ is called a {\it singularity} of order $m=m(p)$. In case $m(p)=0$ for $p\in \Sigma$, we call $p$ a {\it fake singularity}. The pair $(X, \omega)$ is also called a {\it translation surface}. $\omega$ induces a flat metric with singularities at $\Sigma$.  Gauss-Bonnet implies that
\begin{equation}
    2g(X)-2= \sum_{i=1}^{\kappa} m_i.
\end{equation}
A translation surface is an equivalence class of polygons in $\mathbb{C}$: Every translation surface admits a finite collection of polygons, together with a choice of pairing of parallel edges of equal length that are on opposite sides. The moduli space of all translation structures with singularities of orders $m_1, \ldots, m_\kappa$ is denoted by $\mathcal{H}(m_1, \ldots, m_\kappa)$. This space admits a natural action of $SL(2, \mathbb{R})$: for every $g\in SL(2, \mathbb{R})$, $g(X, \omega)$ is understood as the linear action of the matrix $g$ on the union of polygons representing $(X, \omega)$. 

The \textit{Kontsevich-Zorich cocycle} is a monodromy cocycle defined over the action of $SL(2, \mathbb{R})$ on the moduli space of translation surfaces. In the ensuing sections, we will use a discrete-time version of this cocycle (the Zorich cocycle), defined below. For further background on translation surfaces, their relations to IETs, their moduli spaces, and the Kontsevich-Zorich cocycle, we refer the reader to \cite{zorich2006flat, Forni-Matheus, Wright-translation-surface, yoccoz2007interval, viana2008dynamics}.

\subsection{Rauzy-Veech and Zorich renormalizations}
\smallskip
Let $T_{\lambda^{(0)}, \pi^{(0)}}$ be an IET on the interval $I^{(0)}$ with parameters $(\lambda^{0}, \pi^{0})$
and $\alpha_{t}, \alpha_{b}$ be the letters with the property that $\pi_{t}(\alpha_{t})=\pi_{b}(\alpha
_{b})=d$. Assuming that $\lambda^{(0)}_{\alpha_t} \neq \lambda^{(0)}_{\alpha_b}$, we define the Rauzy induction algorithm by considering the first return map of $T_{\lambda^{(0)}, \pi^{(0)}}$ to the subinterval $[0, 1- \min \{\lambda_{\alpha_t}, \lambda_{\alpha_b}\})$. The resulting transformation is the IET $T_{\lambda^{(1)}, \pi^{(1)}}$ defined as follows
\begin{itemize} 
\item[i)] $\lambda^{(0)}_{\alpha_{t}}> \lambda^{(0)}_{\alpha_{b}}$: then let $\lambda^{(1)}_{\alpha}:=\lambda^{(0)}_{\alpha}$ for all $\alpha \neq \alpha_{t}$, $\lambda^{(1)}_{\alpha_{t}}:= \lambda^{(0)}_{\alpha_{t}}- \lambda^{(0)}_{\alpha_{b}}$, $\pi_{t}^{(1)}:=\pi_{t}^{(0)}$, and
\begin{equation} \label{eq:top}
(\pi^{(1)}_{b})^{-1}(i):=
\begin{cases}
(\pi^{(0)}_{b})^{-1}(i) & \text{if}\; i \leq \pi^{(0)}_{b}(\alpha_t),\\
\alpha_{b} & \text{if}\;
i=\pi^{(0)}_{b}(\alpha_t)+1,\\
(\pi^{(0)}_{b})^{-1}(i-1) & \text{otherwise},
\end{cases}
\end{equation}
in which case $(\lambda^{(0)}, \pi^{(0)})$ is said to be of \textit{top type}
(since $\alpha_t$ ``wins'' against $\alpha_b$).
\item[ii)]$\lambda_{\alpha_{b}}^{(0)}>\lambda_{\alpha_{t}}^{(0)}$: then let $\lambda_{\alpha}^{(1)}:=\lambda_{\alpha}^{(0)}$ for all $\alpha \neq \alpha_{b}$, $\lambda_{\alpha_{b}}^{(1)}:=\lambda_{\alpha_{b}}^{(0)}- \lambda_{\alpha_{t}}^{(0)}$, $\pi_{b}^{(1)}:=\pi_{b}^{(0)}$, and 
\begin{equation}\label{eq:bottom}
(\pi^{(1)}_{t})^{-1}(i):=
\begin{cases}
(\pi^{(0)}_{t})^{-1}(i) & \text{if}\; i \leq \pi^{(0)}_{t}(\alpha_b),\\ 
\alpha_{t} & \text{if}\; i=\pi^{(0)}_{t}(\alpha_b)+1,\\
(\pi^{(0)}_{t})^{-1}(i-1) & \text{otherwise},
\end{cases}
\end{equation}
in which case $(\lambda^{(0)}, \pi^{(0)})$ is said to be of \textit{bottom type}
(since $\alpha_b$ ``wins'' against $\alpha_t$).
\end{itemize}
The combinatorial data $\pi$ is \textit{irreducible}, 
if there exists no $1\leq k < d$ such that 
$\pi_{t}^{-1}(\{1,\dots,k\})=\pi_{b}^{-1}(\{1,\dots,k\})$. We denote the space of such combinatorial data by $\mathfrak{S}^{0}(\mathcal{A})$. The above operations define an equivalence relation that partitions
the set of combinatorial data into equivalence \textit{Rauzy classes}, denoted by $\mathfrak{R} \subset \mathfrak{S}^{0}(\mathcal{A})$. 

Let $\mathfrak{R}$ be a Rauzy class and $\pi \in \mathfrak{R}$ a permutation. We define $\mathcal{Q}_{R}: \mathbb{R}_{+}^{\mathcal{A}} \times \mathfrak{R} \to 
\mathbb{R}_{+}^{\mathcal{A}} \times \mathfrak{R}$, called \textit{Rauzy 
induction map}, to be the map that sends $(\lambda^{(0)}, \pi^{(0)})$ to $(\lambda^{(1)}, \pi^{(1)})$, according to equations $\ref{eq:top}$ and $\ref{eq:bottom}$. Rescaling $\lambda^{(1)}$ back to size $1$ 
(size being the $\ell^{1}$-norm) yields a transformation $\mathcal{R}_{R}: 
\Delta^{\mathcal{A}} \times \mathfrak{R} \to \Delta^{\mathcal{A}} \times \mathfrak{R}$, which we call the \textit{Rauzy renormalization map}.

Zorich defined an accelerated version of the Rauzy renormalization, which is now called \textit{Zorich transformation}.

\begin{definition}
For an element $(\lambda, \pi) \in \Delta^{\mathcal{A}} \times \mathfrak{R}$, let    $n:=n(\lambda, \pi)$ be the smallest 
 $n\in \mathbb{N}$ for which the type of $(\lambda^{(n)}, \pi^{(n)})=\mathcal{Q}_{R}^{n}(\lambda, \pi)$ is different from that of $(\lambda, \pi)$. Then, the Zorich induction, $\mathcal{Q}_{Z}(\lambda, \pi)$ is defined by 
\[\mathcal{Q}_{Z}(\lambda, \pi):=(\lambda^{(n)}, \pi^{(n)})\] 
and the Zorich renormalization is defined by rescaling the intervals back to size $1$
\[\mathcal{R}_Z(\lambda,\pi):=\big(\frac{\lambda^{(n)}}{|\lambda^{(n)}|},\pi^{(n)}\big).\]
\end{definition}
Zorich showed the following
\begin{thm} (\cite{ZorichGaussmeasure}) Let $\mathfrak{R} \subset \mathfrak{S}^{0}(\mathcal{A})$ be a Rauzy class. Then $\mathcal{R}_{Z}|_{\Delta^{\mathcal{A}} \times \mathfrak{R}}$ admits a unique ergodic invariant probability measure $\mu_{Z}$ equivalent to the Lebesgue measure.
\end{thm}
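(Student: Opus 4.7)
The plan is to prove this result by viewing the Zorich renormalization $\mathcal{R}_Z$ as a first-return acceleration of the Rauzy-Veech renormalization $\mathcal{R}_R$, and transferring an appropriate invariant measure from $\mathcal{R}_R$ to $\mathcal{R}_Z$. The starting point is the unrescaled induction $\mathcal{Q}_R:\mathbb{R}_+^{\mathcal{A}}\times\mathfrak{R}\to\mathbb{R}_+^{\mathcal{A}}\times\mathfrak{R}$, which on each piece acts on the length coordinates by a matrix in $GL(d,\mathbb{Z})$ and therefore preserves Lebesgue measure on $\mathbb{R}_+^{\mathcal{A}}$. A standard radial-integration (cone) construction then produces an absolutely continuous $\mathcal{R}_R$-invariant measure $\nu_R$ on $\Delta^{\mathcal{A}}\times\mathfrak{R}$ with an explicit, positive density; this is the measure originally constructed by Veech.

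Next I would show that $\nu_R$ has infinite total mass while the Zorich acceleration $\mathcal{R}_Z$ preserves a finite measure. The blow-up of $\nu_R$ concentrates on configurations where one of the two ``competing'' letters $\alpha_t,\alpha_b$ is much larger than the other, since in that regime $\mathcal{R}_R$ repeats the same type for many iterations in a row and the preimages accumulate along the corresponding boundary face of $\Delta^{\mathcal{A}}$. By definition, $\mathcal{R}_Z(\lambda,\pi)$ is the first iterate of $\mathcal{R}_R$ on which the type differs from that of $(\lambda,\pi)$, so the roof function $n(\lambda,\pi)$ is finite $\nu_R$-almost everywhere, and Kac's formula applied to it yields an $\mathcal{R}_Z$-invariant measure on $\Delta^{\mathcal{A}}\times\mathfrak{R}$. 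A direct computation (integrating out along each ``same-type tower'') shows that this accelerated measure has finite total mass, which we normalize to the probability measure $\mu_Z$; absolute continuity with respect to Lebesgue measure is preserved and in fact $\mu_Z$ is equivalent to Lebesgue since the density inherited from $\nu_R$ remains positive.

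For ergodicity and uniqueness, I would invoke Veech's theorem that $(\mathcal{R}_R,\nu_R)$ is conservative and ergodic. Since $\mathcal{R}_Z$ is the first-return map of $\mathcal{R}_R$ to a positive-measure set that is visited infinitely often by $\nu_R$-almost every orbit, a standard induced-map argument transfers ergodicity from $(\mathcal{R}_R,\nu_R)$ to $(\mathcal{R}_Z,\mu_Z)$. Uniqueness of the absolutely continuous invariant probability then follows automatically: any two ergodic invariant probabilities in the same measure class must coincide.

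The main obstacle is establishing ergodicity of $\mathcal{R}_R$ in the first place. A purely combinatorial route must combine distortion estimates for the piecewise projective branches of $\mathcal{R}_R$ with a Hopf-style argument, and it requires ruling out invariant sets that could be hidden in the structure of the Rauzy graph; a conceptually cleaner but technically heavier alternative is to transport the question to the moduli space of abelian differentials via the Masur-Veech construction and invoke Masur's ergodicity of the Teichmüller flow on connected components of strata. Either route brings in input well beyond the elementary definitions introduced in this subsection, which is what makes this theorem nontrivial despite its concise statement.
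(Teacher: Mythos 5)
The paper offers no proof of this statement: it is imported wholesale from Zorich \cite{ZorichGaussmeasure}, so there is no internal argument to compare against. Your outline does follow the standard (essentially Zorich's original) route -- Veech's infinite absolutely continuous invariant measure for $\mathcal{R}_R$, acceleration to kill the divergence, transfer of ergodicity, and uniqueness from mutual singularity of distinct ergodic measures -- and all the ingredients you name are the right ones.

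Two points where the sketch papers over real work. First, $\mathcal{R}_Z$ is \emph{not} literally the first-return map of $\mathcal{R}_R$ to a measurable subset of $\Delta^{\mathcal{A}}\times\mathfrak{R}$: the Zorich times are the times at which the type changes relative to the \emph{previous} step, and since $\mathcal{R}_R$ is (a.e.) two-to-one --- every point of $\Delta_{\pi'}$ has one preimage of top type and one of bottom type --- the type of the previous step is not a function of the current point. So the set you would want to induce on does not exist downstairs; one must either pass to the natural extension on the space of zippered rectangles (where the $\tau$-datum remembers the last move) or to the two-fold extension $(\Delta^{\mathcal{A}}\times\mathfrak{R})\times\{t,b\}$, induce there, and project. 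With that fix your Kac-type argument and the transfer of ergodicity go through. Second, ``a direct computation shows that this accelerated measure has finite total mass'' is precisely where the content of the theorem lies: it requires the explicit form of Veech's density (a finite sum of products of reciprocals of linear forms in $\lambda$) and an estimate of its integral over the region where one letter wins many consecutive times; this is the part of \cite{ZorichGaussmeasure} that cannot be waved through. Finally, note that the non-integrability of the roof $n(\lambda,\pi)$ with respect to $\mu_Z$ is forced by Kac's formula (finite induced measure, infinite total measure), so one should not expect to deduce finiteness of $\mu_Z$ from integrability of the return time; the estimate must be done on the density itself. With these caveats the plan is sound and matches the proof in the cited source.
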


\subsection{Rauzy-Veech and Zorich cocycles}
The {\it Rauzy-Veech cocycle} is a matrix cocycle that stores the visitation data of the renormalization procedure. In other words, for $(\lambda,\pi)$, $B^{R}(\lambda, \pi)$ is the $d\times d$ matrix  given by the following formula
\begin{equation}
    B^{R}(\lambda, \pi):=
    \begin{cases}
    I+ E_{\alpha_{b}\alpha_{t}} & \text{if} \; (\lambda, \pi)\; \text{is of top type,}\\
    I+ E_{\alpha_{t}\alpha_{b}} & \text{if} \; (\lambda, \pi) \; \text{is of bottom type,}
    \end{cases}
\end{equation}
where for all $i,j \in \{1, \dots, d\}$, the symbol $E_{ij}$ denotes the $d\times d$ elementary matrix with a single non-zero entry equal to $1$ in position $(i,j)$.

With the above definition, $(\mathcal{R}_{R}, B^{R})$ forms an integral
cocycle. The corresponding cocycle over the Zorich transformation is called the {\it Zorich cocycle} and is denoted by $(\mathcal{R}_{Z},B^{Z})$. 

Let us consider a diagram with vertices the set of all combinatorial data in the Rauzy class $\mathfrak{R}$. We connect two vertices with
an arrow labeled by $t$ or $b$ specifying the type of operation needed 
to go from the initial vertex of the edge to its terminal vertex. We denote the space of all paths in this diagram by $\Pi(\mathfrak{R})$. Now, if $\gamma \in \Pi(\mathfrak{R})$ is a path obtained by concatenation of labeled edges $\gamma_1, \gamma_2,\dots, \gamma_k$, for some $k\in \mathbb{N}$, we let $B_{\gamma}$ be the product of the corresponding renormalization matrices. That is,
\begin{equation}
    B_{\gamma}:=B_{\gamma_k}\cdots B_{\gamma_1}.
\end{equation}
Note that if $\lambda^{(\gamma)}$ is the length vector corresponding to the induction of $(\lambda, \pi)$ following the path $\gamma$ (we are tacitly assuming that $\gamma$ starts at $\pi$) then 
\begin{equation}\label{lambda parameters under renormalization}
    \lambda^{(\gamma)}B_{\gamma}=\lambda.
\end{equation}
where we are viewing $\lambda^{(\gamma)}$ and $\lambda$ as row vectors.

\subsection{Veech's zippered rectangles}
\label{Veech's Zippered Rectangles}

Veech introduced a construction, called \textit{zippered rectangles}, for creating a translation surface out of an interval exchange transformation and a locally constant roof function. To be more precise, to each permutation $\pi \in \mathfrak{R}$ there corresponds a $2g$-dimensional subspace $H(\pi) \subset \mathbb{R}^d$. Given $\lambda \in \Delta$ and $h \in H^{+}(\pi)$, the positive cone consisting of vectors with positive coordinates in $H(\pi)$, Veech's method allows to construct a family of translation surfaces each corresponding to $\lambda, \pi, h$ and a certain zipping data. As long as one is concerned with the dynamics of the translation flow in the north direction, the zipping data do not play any role, and thus in this paper, we omit any considerations thereof. Strictly speaking, the flow in the north direction is measurably isomorphic to the special (suspension) flow with roof function $h$ over the IET $T_{\lambda, \pi}$. We give the details of Veech's construction below.

Let $\Omega_\pi$ be the following transformation

\begin{equation}
    (\Omega_{\pi})_{(\alpha, \beta)}:= 
    \begin{cases}
    +1 & \text{if} \; \pi_{b}(\beta)< \pi_b(\alpha), \; \pi_t(\beta)> \pi_t(\alpha),\\
    -1 & \text{if} \; \pi_b(\beta)> \pi_b(\alpha), \; \pi_t(\beta)< \pi_t(\alpha),\\
    0 & \text{otherwise},
    
    \end{cases}
\end{equation}
and  
\begin{equation}
    T^{+}(\pi):=\bigg\{\tau \in \mathbb{R}^{d}: \sum_{\pi_{t}(\alpha)< \pi_{t}(\beta)} \tau_{\alpha}>0, \; \sum_{\pi_b(\alpha)> \pi_{b}(\beta)} \tau_{\alpha}<0\bigg\}.
\end{equation}
Take $H^{+}(\pi) \subset \mathbb{R}_{+}^{d}$ to be the image of $T^{+}(\pi)$ under $-\Omega_{\pi}$.

Let us call $\tau \in T^{+}(\pi)$ the zipping data, and $h= -\Omega_{\pi}(\tau) \in H^{+}(\pi)$ the roof function. The translation surface corresponding to $(\lambda, \pi, h, \tau)$ is obtained by applying certain identifications on the union of rectangles of the 
form $I_{\alpha}\times [0,h_\alpha]$. Roughly speaking, the zipping data $\tau$ determine the heights up to which two adjacent rectangles are sewn together in the vertical direction. Regarding the horizontal sides, we identify the top edge of the rectangle $I_\alpha \times [0, h_\alpha]$ with the subinterval of $T_{\lambda, \pi}(I_\alpha) \subset 
I$. The resulting surface $M=M(\lambda, \pi, \tau, h)$ is a translation surface whose $1$-st cohomology space $H_1(M)$ corresponds to the subspace $H(\pi)$. Then $\Omega(\pi)$ is the symplectic intersection 
pairing form on $H_1(M)$, which is well known to be preserved under the Zorich cocycle. This implies that the spectrum of the Zorich cocycle restricted to the subspace $H(\pi)$ is symmetric with respect 
to zero. For a detailed explanation of this construction and its consequences, we refer the reader to \cite{VeechGaussmeasure}, or \cite{viana2008dynamics}.

Let $g$ and $\kappa$ be the genus and the number of singularities of the surface $M$, respectively. Denote by $N(\pi)$ the kernel of $\Omega(\pi)$. The following are due to Veech \cite{VeechGaussmeasure}
\begin{equation} \label{genus and singularities}
    \dim(N(\pi))= \kappa -1, \; d= 2g+ \kappa-1.
\end{equation}

Veech \cite{VeechGaussmeasure} showed that the Rauzy induction extends to an invertible dynamical system 
(natural extension), called {\it Rauzy--Veech induction}, on the space of zippered rectangles. Moreover, the locally constant subbundles $H(\pi), N(\pi)$ are respectively invariant and contravariant under the {\it Rauzy--Veech cocycle}. That is, for $\mathcal{R}_{R}(\lambda, \pi)=(\lambda^{(1)}, \pi^{(1)})$, we have
\begin{align}
    B^{R}(\lambda, \pi)\cdot H(\pi)=H(\pi^{(1)}) \\
    {}^tB^{R}(\lambda, \pi)\cdot N(\pi^{(1)})=N(\pi)\,.
\end{align}
In the same paper, Veech also showed that for any loop $\gamma \in \Pi(\mathfrak{R})$ starting and ending at $\pi$, ${}^tB_\gamma$ acts as identity on $N(\pi)$. Therefore the Lyapunov exponents of the Zorich cocycle corresponding to the complement of $H(\pi)$ in $\mathbb{R}^d$ are all zeros. However, Forni \cite{ForniDeviation} showed that the Zorich cocycle restricted 
to $H(\pi)$ is non-uniformly hyperbolic, i.e., its Lyapunov exponents $\chi_1(\mathfrak{R}) \geq \chi_2(\mathfrak{R}) \geq \dots \geq \chi_{2g}(\mathfrak{R})$ are all nonzero. This result was later strengthened by Avila and 
Viana in \cite{AvilaVianaSimplicity}, where they showed that the cocycle has simple Lyapunov spectrum, thereby finishing the proof of the so-called Kontsevich-Zorich conjecture (see \cite{kontsevich-Zorich} and Section \ref{intro} of this paper). 

\begin{thm} (\cite{ForniDeviation}, \cite{AvilaVianaSimplicity}) For any Rauzy class $\mathfrak{R} \subset \mathfrak{S}_d$ the Zorich cocycle on $\Delta \times \mathfrak{R}$ is non-uniformly hyperbolic and has simple Lyapunov spectrum. Therefore,
\begin{equation}
    \chi_1(\mathfrak{R})>\dots>\chi_g
    (\mathfrak{R})>0> -\chi_{g}(\mathfrak{R})>\dots>-\chi_{1}(\mathfrak{R}).
\end{equation}
The symmetry of the Lyapunov spectrum is due to the cocycle being symplectic. 
\end{thm}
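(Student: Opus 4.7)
The plan is to split the stated theorem into two essentially independent parts: (a) \emph{non-uniform hyperbolicity}, namely that none of the Lyapunov exponents of the Zorich cocycle restricted to $H(\pi)$ vanishes; and (b) \emph{simplicity}, namely that the $2g$ exponents on $H(\pi)$ are pairwise distinct. The symmetry with respect to $0$ follows directly from the invariance of the symplectic form $\Omega(\pi)$ on $H(\pi)$ under the Zorich cocycle, as recalled in Subsection \ref{Veech's Zippered Rectangles}, so it suffices to exhibit a strictly positive upper half of the spectrum.

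For part (a), I would pass to the continuous-time picture and work with the Kontsevich-Zorich cocycle on the Hodge bundle over the stratum $\mathcal{H}(m_1,\ldots,m_\kappa)$. Each fiber $H^1(X,\mathbb{R})$ is endowed with the Hodge norm induced by the translation structure, and the Hodge decomposition deforms along the Teichmüller geodesic flow. Using the Hodge-Riemann bilinear relations, the logarithmic derivative of $\|c\|_{\mathrm{Hodge}}$ along the flow can be expressed as a nonnegative quadratic form $\Phi_\omega(c)$ involving the Kodaira-Spencer map. The analytic core is to show that $\Phi_\omega$ is strictly positive outside a thin locus, and after integration with respect to the Masur-Veech measure, it dominates a strictly positive amount. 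This forces the neutral Oseledets subbundle to be trivial. Transporting the conclusion back through Veech's zippered rectangles identifies the Kontsevich-Zorich exponents with those of $(\mathcal{R}_Z, B^Z)|_{H(\pi)}$, yielding non-uniform hyperbolicity.

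For part (b), I would appeal to the Avila-Viana pinching and twisting criterion for simplicity of locally constant cocycles. Concretely, it is enough to construct two loops $\gamma_1, \gamma_2 \in \Pi(\mathfrak{R})$ based at a fixed $\pi \in \mathfrak{R}$ such that: (i) $B_{\gamma_1}$ restricted to $H(\pi)$ has $2g$ real eigenvalues of pairwise distinct moduli (pinching), and (ii) $B_{\gamma_2}$ maps the flag of invariant subspaces of $B_{\gamma_1}$ into general position relative to the complementary flag (twisting). Both conditions are combinatorial statements about the Rauzy diagram, and can be verified by explicit inductive construction of the loops using connectedness of $\mathfrak{R}$ and the elementary form of the matrices $B^Z$ on edges. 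Once pinching and twisting are established, the Avila-Viana machinery (a probabilistic separation argument for invariant sections of Grassmannian bundles) delivers simplicity directly.

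The main obstacle is unquestionably the quantitative nondegeneracy of the Hodge form $\Phi_\omega$ in part (a). Controlling the behavior of the Hodge norm near the principal boundary of $\mathcal{H}(m_1,\ldots,m_\kappa)$, and ensuring integrability of $\log\|c\|_{\mathrm{Hodge}}$ with respect to the Masur-Veech measure, requires delicate work involving a spectral-gap style estimate for the Hodge Laplacian; this is the genuinely novel input. By contrast, the pinching and twisting verifications in part (b), though technically intricate, reduce to finitely many combinatorial checks per Rauzy class and lie within the reach of direct construction.
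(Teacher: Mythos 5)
The paper does not prove this theorem; it is quoted verbatim from \cite{ForniDeviation} (non-uniform hyperbolicity, via the variational formula for the Hodge norm along the Teichm\"uller flow) and \cite{AvilaVianaSimplicity} (simplicity, via the pinching-and-twisting criterion for the Rauzy monoid), which is exactly the two-part strategy your sketch outlines. Your proposal is therefore a correct high-level summary of the cited proofs, matching the paper's own (purely referential) treatment.
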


\section{Toral and twisted cocycles} \label{sec: Twisted Cocycle}
The Rauzy-Veech and Zorich cocycles naturally induce cocycles acting by linear automorphisms on the $d$-torus $\mathbb{T}^d= \mathbb{R}^d/\mathbb{Z}^{d}$, which we call the \textit{toral Rauzy--Veech cocycle} and the \textit{toral Zorich cocycle}, respectively. 
\begin{definition} The toral Rauzy-Veech cocycle is defined as 
\begin{align}
 \toral^{R}:&\Delta \times \mathfrak{R} \times \mathbb{T}^{d} \longrightarrow  \Delta \times \mathfrak{R} \times \mathbb{T}^{d},
\\
& (\lambda,\pi, \zeta)\longmapsto (\mathcal{R}_{R}(\lambda, \pi), B^R(\lambda, \pi)\zeta).
\end{align}
Similarly, we call the accelerated version of the above cocycle the toral Zorich cocycle which we define as \[\toral^Z(\lambda,\pi, \zeta):= (\mathcal{R}_{Z}(\lambda, \pi), B^Z(\lambda, \pi)\zeta).\] 
\end{definition}

The twisted cocycles are cocycles defined over the toral Rauzy-Veech and Zorich cocycles as follows.

\begin{definition} \label{def:twisted-cocycle}(Twisted Rauzy-Veech and Zorich cocycles) For an element $(\lambda, \pi, \zeta) \in \Delta \times \mathfrak{R} \times \mathbb{T}^{d}$, we define the corresponding twisted Rauzy-Veech matrix by
\begin{equation} 
    \tw^{R}(\lambda, \pi, \zeta):=
    \begin{cases}
    I+ \exp(2\pi i \zeta_{\alpha_b})E_{\alpha_{b}\alpha_{t}}& \text{if} \; (\lambda, \pi)\; \text{is of top type}\\
    I+ E_{\alpha_{t}\alpha_{b}} +(-1+ \exp(2\pi i 
    \zeta_{{\alpha}_b})) E_{\alpha_t, \alpha_t} & \text{if} \; (\lambda, \pi) \; \text{is of bottom type.}
    \end{cases}
\end{equation}
The corresponding twisted Zorich matrix is defined by
\begin{equation}
    \tw^{Z}(\lambda, \pi, \zeta):= \prod\limits_{k=0}^{n(\lambda, \pi)-1} \tw^{R}((\toral^{R})^{j}(\lambda, \pi, \zeta)),
\end{equation}
and the corresponding twisted Zorich cocycle is given by
\begin{align*}
    \twc:&~\Delta\times \mathfrak{R} \times \mathbb{T}^{d}\times \mathbb{C}^{d} \to \Delta\times \mathfrak{R} \times \mathbb{T}^{d}\times \mathbb{C}^{d}\\
    &(\lambda, \pi, \zeta, f)\longmapsto(\toral(\lambda, \pi, \zeta), \tw^{Z}(\lambda, \pi, \zeta) f)
\end{align*}
\end{definition}
\begin{rmk}
    Notice that the definition of the twisted Rauzy-Veech matrix depends solely upon the equivalence class of $\zeta$ mod $\mathbb{Z}^{d}$.
\end{rmk}

\subsection{Invariant measures}
For $p>1$ prime, let $Q_p \subset \mathbb{T}^d$ be the set of rational points of the form $\{\n/p \vert \n \in \mathbb{Z}^d\setminus \{0\}\} \subset \mathbb{R}^d/\mathbb{Z}^d$. The subspace $H(\pi)$ is defined over $\mathbb{Q}$ and therefore $H(\pi) \cap \mathbb{Z}^d \subset H(\pi)$ is a lattice. We denote by $\mathbb{T}^{2g}_{\pi}$ the torus obtained via quotienting $H(\pi)$ by this lattice. We let $Q^{(\pi)}_{p} \subset H(\pi)/(H(\pi) \cap \mathbb{Z}^d)$ be the set of nonzero rational points with denominator $p$ in $\mathbb{T}^{2g}_\pi$. Then we define $\nu_{p}, \nu^{(\pi)}_{p}$ to be the atomic measures that give equal weight to the elements of $Q_p$ and $Q^{(\pi)}_{p}$, respectively. Let $m_{\mathbb{T}^{2g}_{\pi}}$ be the Lebesgue measure on $\mathbb{T}^{2g}_{\pi}$ normalized to be a probability measure. It can be readily verified that the measures
\begin{itemize}
    \item $\sum_{\pi \in 
    \mathfrak{R}}\mu_{Z}\big\vert_{\Delta_{\pi}} \times m_{\mathbb{T}^{2g}_{\pi}}$, $\mu_Z \times m_{\mathbb{T}^d}$, 
    \item $\sum_{\pi \in 
    \mathfrak{R}}\mu_{Z}\big\vert_{\Delta_{\pi}} \times \nu^{(\pi)}_p$, $\mu_Z \times \nu_p$
\end{itemize}
are invariant under $\toral^{Z}$. 

Let $k$ be a positive integer and $\gamma= \gamma_1 \cdots \gamma_k$, where $\gamma_i$'s are single arrows in the diagram (labeled edges), be a loop 
starting and ending at $\pi$ whose corresponding Rauzy-Veech matrix $B_\gamma$ has 
only positive entries. Moreover, assume that for every $1 \leq j \leq k$ $\gamma_1 \cdots \gamma_j \neq \gamma_{k-j+1} \cdots \gamma_k$.  Let $\Delta_{\gamma}:={}^tB_\gamma. \Delta\subset \Delta$ where ${}^tB_{\gamma}.$ is understood as the projective action on the simplex. Then $\Delta_\gamma$ is the simplex of parameters 
whose first $k$ steps of renormalization follow $\gamma$ (See \eqref{lambda parameters under renormalization}). We denote by $\mathcal{R}^{\gamma}$,  $\mathcal{R}^{\gamma}_{\mathbb{T}^d}$ the return maps of $\mathcal{R}_Z$ and $\mathcal{R}^{Z}_{\mathbb{T}^d}$ to $\Delta_\gamma \times \{\pi\}$ and $\Delta_\gamma \times \{\pi\} \times \mathbb{T}^d$, respectively. We introduce preferred invariant measures for $\mathcal{R}^{\gamma}$ and $\mathcal{R}^{\gamma}_{\mathbb{T}^d}$ which will be used in the statement of Proposition \ref{Exponential mixing}.

Let $\Omega$ be the set of loops ${\ell} \in \Pi(\mathfrak{R})$ starting and ending at $\pi$ with the property that the type of the last arrow in $\ell$ differs from 
that of the starting arrow $\gamma_1$ in $\gamma$. Then there exists a measurable countable partition of $\Delta_\gamma$ into sets with disjoint interior of the form $\Delta^{(\ell)}:= {}^tB_{\gamma {\ell} \gamma}. \Delta$ for ${\ell} \in \Omega$ 
whose elements are mapped onto $\Delta_\gamma$ under $\mathcal{R}^{\gamma}$. Furthermore, for ${\bf l} :=(\ell_0,\ldots, \ell_{n-1}) \in \Omega^n$ we define 
\begin{equation}
    \Delta^{\bf l}:= \big\{ x \in \Delta_\gamma \vert (\mathcal{R}^{\gamma})^{k}(x) \in \Delta ^{(\ell_k)},\; 0\leq k \leq n-1 \big\}
\end{equation}
Positivity of the entries of $B_\gamma$ implies that the inverse branches are all 
projective contractions and thus by \cite[Lemma 2.1]{AvilaForniweak} there exists a 
unique absolutely continuous $\mathcal{R}^{\gamma}$-invariant measure $\mu_\gamma$ on $\Delta_\gamma$ which satisfies the following bounded distortion property. There exists $K>0$ such that for every $m, n \in \mathbb{N}$ and every $\textbf{l} \in \Omega^n, \textbf{l}' \in \Omega^m$
\begin{equation}\label{bounded-distortion}
    \frac{1}{K} \mu_\gamma (\Delta^{\bf l'}) \leq \frac{\mu_\gamma(\Delta ^{{\bf l l}'})}{\mu_\gamma(\Delta ^{\bf l })} \leq K \mu_\gamma (\Delta ^{{\bf l}'}),
\end{equation}
where ${\bf l} {\bf l}'$ stands for the concatenation of $\bf l$ and ${\bf l}'$. We rely on many important theorems of several authors (\cite{AvilaForniweak,AvilaDelecroixweak, AvilaVianaSimplicity}) which are based upon this quasi-independence property. 

Notice that absolute continuity of $\mu_\gamma, \mu_Z$ with respect to Lebesgue implies that $\mu$ is equal to the measure induced by $\mu_Z$ under acceleration of $\mathcal{R}_{Z}$. Then $\mu_\gamma \times m_{\mathbb{T}^{2g}_{\pi}}$ is the $\mathcal{R}^{\gamma}_{\mathbb{T}^d}$-invariant measure on $\Delta_{\gamma} \times \mathbb{T}^{d}$ induced by $\sum_{\pi \in 
    \mathfrak{R}}\mu_{Z}\big\vert_{\Delta_{\pi}} \times m_{\mathbb{T}^{2g}_{\pi}}$.

\begin{prop} \label{Exponential mixing}With the same notations as above, $(\mathcal{R}^{\gamma}_{\mathbb{T}^d}, \mu_\gamma \times m_{\mathbb{T}^{2g}_{\pi}} )$ is exponentially mixing. 
\end{prop}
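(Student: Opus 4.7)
The plan is to view $(\mathcal{R}^{\gamma}_{\mathbb{T}^d}, \mu_\gamma \times m_{\mathbb{T}^{2g}_{\pi}})$ as a compact-group extension of the base map $(\mathcal{R}^{\gamma}, \mu_\gamma)$ and decompose correlations by Fourier analysis on the fiber $\mathbb{T}^{2g}_{\pi}$, reducing the assertion to two ingredients: exponential mixing of the base, and exponential expansion of the dual cocycle $k\mapsto B_n(\lambda,\pi)^{T}k$ on the character lattice of $\mathbb{T}^{2g}_{\pi}$.

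First I would record that $\mathcal{R}^{\gamma}$ is a full-branch uniformly expanding Markov map: the partition $\{\Delta^{(\ell)}\}_{\ell\in\Omega}$ is generating, each inverse branch ${}^tB_{\gamma\ell\gamma}$ is a strict projective contraction thanks to the positivity of the entries of $B_{\gamma}$, and the bounded distortion estimate \eqref{bounded-distortion} holds by construction. Classical Ruelle--Perron--Frobenius / Aaronson--Denker theory then yields a spectral gap for the associated transfer operator on a Banach space of locally H\"older observables, and hence exponential decay of correlations on the base at some rate $\rho_{0}\in(0,1)$.

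Next, write $\chi_{k}(\zeta):=e^{2\pi i\langle k,\zeta\rangle}$ for $k$ in the dual lattice of $H(\pi)\cap\mathbb{Z}^{d}$. Since the fiber action $\zeta\mapsto B(\lambda,\pi)\zeta$ preserves Haar measure and satisfies $\chi_{k}\circ B=\chi_{B^{T}k}$, the Fourier decompositions $F=\sum_{k}F_{k}\chi_{k}$, $G=\sum_{k}G_{k}\chi_{k}$ of smooth test functions give
\begin{equation*}
\int F\circ(\mathcal{R}^{\gamma}_{\mathbb{T}^d})^{n}\cdot \bar G\, d(\mu_\gamma\times m)=\sum_{k}\int F_{k}\circ(\mathcal{R}^{\gamma})^{n}(\lambda,\pi)\cdot \overline{G_{B_n(\lambda,\pi)^{T}k}(\lambda,\pi)}\, d\mu_\gamma.
\end{equation*}
The $k=0$ term reproduces the product of means plus a base correlation that decays like $\rho_{0}^{n}$. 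For $k\neq 0$ I would exploit the smoothness of $F,G$ in $\zeta$, which yields the Fourier decay $\|F_{k}\|_{\infty},\|G_{k}\|_{\infty}\leq C(1+\|k\|)^{-r}$ for any desired $r$, combined with a uniform lower bound $\|B_{n}(\lambda,\pi)^{T}k\|\geq e^{cn}$ valid outside a set of $\mu_\gamma$-measure $O(e^{-\eta n})$. The latter would follow from the positivity (indeed, simplicity) of the top Lyapunov exponent of the Zorich cocycle restricted to $H(\pi)$ together with a large deviation estimate in the Gibbs--Markov setting. Summing the polynomially decaying fiber Fourier coefficients against this exponential expansion yields $O(e^{-\delta n})$ for a $k$-uniform $\delta>0$, completing the proof.

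The main obstacle is to promote the pointwise Oseledets growth of $\|B_{n}^{T}k\|$ to a large deviation bound that is \emph{uniform in the nonzero character} $k$. I would handle this either through Dolgopyat-type uniform spectral estimates on the family of twisted transfer operators indexed by characters $\chi_{k}$, in the spirit of the spectral techniques of \cite{AvilaVianaSimplicity,AvilaDelecroixweak}, or more concretely by partitioning $\Delta_\gamma$ into cylinders $\Delta^{\bf l}$ of combinatorial length $n$ on which $B_{n}$ is locally constant, and using the quasi-independence \eqref{bounded-distortion} to convert fiberwise exponential growth into a union-bound large deviation estimate that is uniform in $k$.
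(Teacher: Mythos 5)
Your proposal is correct and follows essentially the same route as the paper: Fourier decomposition along the toral fiber, polynomial decay of the fiber Fourier coefficients, and a large deviation estimate for the growth of $\|{}^tB_n(x)\cdot\n\|/\|\n\|$ that is uniform in the nonzero character, which the paper also derives from uniqueness of u-states via \cite{AvilaVianaSimplicity}, \cite{AvilaDelecroixweak}, and \cite{Dolgopyat_2004} — exactly your first suggested resolution of the "main obstacle." The only cosmetic difference is that you treat the zero mode explicitly via a spectral gap for the base transfer operator, a point the paper leaves implicit.
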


\begin{proof}

    Let $f(x,\zeta):= \sum_{\n\in \mathbb{Z}^{d}} f_{\n}(x) \exp(2\pi i \n. \zeta), h(x, \zeta):= \sum_{\n\in \mathbb{Z}^{d}} h_{\n}(x) \exp(2\pi i \n. \zeta)$ be two observables with the property that
       \begin{equation}\label{regularity:f,h}
        \|h_{\n}\|_{\infty} \leq C_h \|\n\|^{-\beta}, \;\; \sum_{\n \in \mathbb{Z}} \|f_{\n}\|_2 < \infty.
    \end{equation}
    Then
    \begin{align} \label{exponential mixing ineq}
        \int f \circ (\mathcal{R}^{\gamma}_{\mathbb{T}^d})^k(x, \zeta) \overline{h(x, \zeta)} d\mu(x) dm_{\mathbb{T}_{\pi}^2g}(\zeta) &=\sum 
        _{\n \in \mathbb{Z}^d} \int f_{\n}\circ(\mathcal{R}^{\gamma})^{k}(x)\overline{h_{B^{t}_k(x). \n}(x)}d\mu(x)\\
        & \leq C \sum_{\n \in \mathbb{Z}^d} \bigg( \|f_{\n} \|_{2} \sum_{{\bf l} \in \Omega^{k}} \| h_{{}^tB^{{\bf l}}.\n} \|_{\infty} \mu_\gamma( \Delta^{{\bf l}}) \bigg) \\
        & \leq CC_h \sum_{\n \in \mathbb{Z}^d} \bigg( \|f_{\n} \|_{2}  \sum_{{\bf l} \in \Omega^{k}} \|{}^tB^{{\bf l}}. \n\|^{-\beta} \mu_\gamma(\Delta^{\bf l})\bigg)\\
        & \leq C_1 e^{-k\beta_2} \sum_{\n \in \mathbb{Z}^d} \|f_{\n}\|_2.
    \end{align}
    While the first two inequalities follow from regularity assumptions \eqref{regularity:f,h}, we use the following large deviation estimate to justify the last inequality

    \begin{equation}\label{eq:large-deviation}
    \mu_\gamma\left(\left\{ x \in \Delta_\gamma: \; \frac{1}{k}\log\frac{\|{}^tB_k(x) .\n\|}{\|\n\|}- \chi< -\epsilon \right\}\right) \leq Ce^{-\beta_1 k}
    \end{equation}
    where $\chi$ is the top Lyapunov exponent of $((\hat{\mathcal{R}}^{\gamma})^{-1}, {}^tB)$ defined below. 
    
    It is well known by the work of Dolgopyat \cite{Dolgopyat_2004} that the uniqueness of u-states implies the large 
    deviation principle. We outline the arguments and give the necessary references for the convenience of the reader.
    
    It is easy to verify that the matrices ${}^tB: \Delta_\gamma \to Sp(2g, 
    \mathbb{Z})$ form a cocycle, which is conjugate to a locally constant 
    cocycle (see \cite[Chapter 6]{VianaLyapunovExponentsBook}), over the inverse of the natural extension $\hat{\mathcal{R}}^{\gamma}: \hat{\Delta}_\gamma \to \hat{\Delta}_\gamma$ of $\mathcal{R}^{\gamma}$. The transformation $\hat{\mathcal{R}}^\gamma$, 
    being the natural extension of $\mathcal{R}^{\gamma}: \Delta_\gamma \to \Delta_\gamma$ with its natural invariant measure (induced by $\mu_\gamma$), enjoys the same 
    bounded distortion properties as $\mathcal{R}^{\gamma}$. In 
    \cite{AvilaVianaSimplicity} the 
    authors show the uniqueness of u-states under the assumptions of twisting and pinching of the monoid generated by the 
    image of the cocycle. 
    The monoid generated by the image of ${}^tB$ is conjugate to the monoid generated by the image of $B^{-1}$ (by symplecticity). Therefore, the results of \cite{AvilaVianaSimplicity} imply that the u-state for the cocycle $((\hat{\mathcal{R}}^{\gamma})^{-1}, {}^tB)$ is unique. This in turn puts us in a position to apply the ideas of Dolgopyat \cite{Dolgopyat_2004}, which in this specific case are best illustrated in the proof of \cite[Theorem 25]{AvilaDelecroixweak}. We note that although the 
    notion of u-states is not directly mentioned in \cite{AvilaDelecroixweak}, the proof of the large deviation estimate in \cite{AvilaDelecroixweak} relies on equality (2) in the proof of Theorem 25 of that paper which is an immediate corollary of the uniqueness of u-states. The upshot of the above discussion is that the estimate \eqref{eq:large-deviation} holds.

Positivity of $\chi$ is also guaranteed by the results of \cite{AvilaVianaSimplicity}. Thus
\begin{equation}
    \sum_{{\bf l} \in \Omega^{k}} \|{}^tB^{{\bf l}}. \n\|^{-\beta} \mu(\Delta_\gamma^{\bf l}) \leq Ce^{-\beta_1 k}+ \left\| \n \right\|^{-\beta} e^{-\beta k (\chi-\epsilon)} \leq Ce^{-\beta_2 k}
\end{equation}
for some $\beta_2>0$ which finishes the proof of the last inequality in \eqref{exponential mixing ineq}. 
\end{proof}

\begin{cor}
    The invariant measure $\sum_{\pi \in \mathfrak{R}}\mu_{Z}\big\vert_{\Delta_{\pi}} \times m_{\mathbb{T}^{2g}_{\pi}}$ is ergodic for $\mathcal{R}^Z_{\mathbb{T}^d}$.
\end{cor}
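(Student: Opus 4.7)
The plan is to bootstrap ergodicity of the global system from ergodicity of the return map furnished by Proposition \ref{Exponential mixing}, using the classical fact that if the first-return map of a measure-preserving system to a set of positive measure is ergodic and almost every point visits the set, then the original system is ergodic.

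First, I would fix a loop $\gamma$ as in the statement of Proposition \ref{Exponential mixing} (such a loop exists because the Rauzy-Veech monoid contains positive matrices; this is standard). The proposition shows that the return map $(\mathcal{R}^\gamma_{\mathbb{T}^d},\mu_\gamma\times m_{\mathbb{T}^{2g}_{\pi}})$ is exponentially mixing, and in particular mixing, hence ergodic. Next I would verify that, up to normalization, $\mu_\gamma\times m_{\mathbb{T}^{2g}_{\pi}}$ is exactly the restriction to $A:=\Delta_\gamma\times\{\pi\}\times\mathbb{T}^{2g}_{\pi}$ of the global invariant measure $\mu := \sum_{\pi\in\mathfrak{R}}\mu_Z\vert_{\Delta_\pi}\times m_{\mathbb{T}^{2g}_\pi}$; this is immediate because $\mu_\gamma$ is defined to be the measure induced by $\mu_Z$ on $\Delta_\gamma$, and the $m_{\mathbb{T}^{2g}_\pi}$-fiber is the same as in the global measure. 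Moreover $\mathcal{R}^\gamma_{\mathbb{T}^d}$ is by construction the first-return map of $\mathcal{R}^Z_{\mathbb{T}^d}$ to $A$, and $\mu(A)>0$ since $\mu_Z$ is absolutely continuous with respect to Lebesgue and $\Delta_\gamma$ is open.

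Now let $E\subset\Delta\times\mathfrak{R}\times\mathbb{T}^d$ be a measurable $\mathcal{R}^Z_{\mathbb{T}^d}$-invariant set with $\mu(E)>0$; the goal is to show $\mu(E)=1$. The key observation is that ergodicity of $(\mathcal{R}_Z,\mu_Z)$ (known by Zorich) implies that the saturation $\bigcup_{n\geq 0}\mathcal{R}_Z^{-n}(\Delta_\gamma\times\{\pi\})$ has full $\mu_Z$-measure, and consequently $\bigcup_{n\geq 0}(\mathcal{R}^Z_{\mathbb{T}^d})^{-n}(A)$ has full $\mu$-measure (the fibers over points in this saturation remain full subtori of the relevant $\mathbb{T}^{2g}_\pi$ because the twisted $SL(d,\mathbb{Z})$-action on $\mathbb{T}^d$ preserves each $\mathbb{T}^{2g}_\pi$ and its Haar measure). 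Invariance of $E$ then forces $\mu(E\cap A)>0$, for otherwise the full-measure union of preimages of $A$ would be contained in the $\mu$-null set $\bigcup_n(\mathcal{R}^Z_{\mathbb{T}^d})^{-n}(A\setminus E)$, contradicting $\mu(E)>0$.

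Finally, since $E\cap A$ is $\mathcal{R}^\gamma_{\mathbb{T}^d}$-invariant of positive $(\mu_\gamma\times m_{\mathbb{T}^{2g}_\pi})$-measure, ergodicity of the return map yields $(\mu_\gamma\times m_{\mathbb{T}^{2g}_\pi})(A\setminus E)=0$. Translating back, $\mu(A\setminus E)=0$, and applying $\mathcal{R}^Z_{\mathbb{T}^d}$-invariance of $E$ together with the full-measure covering $\bigcup_n(\mathcal{R}^Z_{\mathbb{T}^d})^{-n}(A)$, we conclude $\mu(E^c)=0$, i.e.\ $\mu(E)=1$. There is no real obstacle here; the whole argument is a direct packaging of Proposition \ref{Exponential mixing} with the classical induced-map criterion for ergodicity, so the corollary follows in a few lines.
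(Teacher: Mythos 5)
Your argument is correct and is precisely the derivation the paper intends: the corollary is stated without proof as an immediate consequence of Proposition \ref{Exponential mixing}, via the standard fact that ergodicity (here, mixing) of the induced map on a positive-measure section, together with almost-sure visitation guaranteed by ergodicity of $(\mathcal{R}_Z,\mu_Z)$, yields ergodicity of the ambient system. Your verification that $\mu_\gamma\times m_{\mathbb{T}^{2g}_{\pi}}$ is the induced measure and that the saturation of $\Delta_\gamma\times\{\pi\}\times\mathbb{T}^{2g}_{\pi}$ has full measure matches what the paper records just before the proposition.
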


\begin{prop} \label{large-ergodic-components}Let $\mathfrak{R}$ and $\pi$ be as before, then there exists  $m_{0}:=m_{0}(\mathfrak{R})>0$ such that for every prime number $p$ the measure of any ergodic component of $\sum_{\pi \in \mathfrak{R}}\mu_{Z}\big\vert_{\Delta_{\pi}} \times \nu^{(\pi)}_p$ is at least $m_0$. 
\end{prop}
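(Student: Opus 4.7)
The plan is to identify the ergodic components of $(\mathcal{R}^Z_{\mathbb{T}^d},\sum_{\pi\in\mathfrak{R}}\mu_Z|_{\Delta_\pi}\times\nu_p^{(\pi)})$ with orbits of a certain integral symplectic monoid on $\mathbb{F}_p^{2g}\setminus\{0\}$, and then to combine Zariski density of the Rauzy--Veech group with strong approximation to handle all but finitely many primes uniformly.

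Concretely, I would first fix $\pi_0\in\mathfrak{R}$ and a positive loop $\gamma$ at $\pi_0$, and consider the return map $\mathcal{R}^\gamma$ introduced before Proposition~\ref{Exponential mixing}. Its cocycle takes values in $Sp(H(\pi_0)\cap\mathbb{Z}^d)\cong Sp(2g,\mathbb{Z})$ and generates a subsemigroup $\mathcal{M}$. Since $\mu_Z$ is $\mathcal{R}_Z$-ergodic, every $\mathcal{R}^Z_{\mathbb{T}^d}$-invariant set projects onto a full $\mu_Z$-measure subset of $\Delta\times\mathfrak{R}$, and a standard disintegration argument then identifies the ergodic components of our measure with the $\mathcal{M}$-orbits on $Q_p^{(\pi_0)}\cong\mathbb{F}_p^{2g}\setminus\{0\}$: an orbit $\mathcal{O}$ corresponds to an ergodic component of mass $|\mathcal{O}|/(p^{2g}-1)$. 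The task thus reduces to bounding this ratio from below, uniformly in $p$.

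The key observation is that, after reducing mod $p$, the image $\mathcal{M}(p)\subset Sp(2g,\mathbb{F}_p)$ is a subsemigroup of a finite group, and the standard pigeonhole trick of iterating powers of a single element until repetition shows that $\mathcal{M}(p)$ is automatically a subgroup, necessarily equal to $\Gamma(p)$, the reduction of the group $\Gamma\subset Sp(2g,\mathbb{Z})$ generated by $\mathcal{M}$. By the pinching and twisting properties of the Rauzy--Veech cocycle established in \cite{AvilaVianaSimplicity}, $\Gamma$ is Zariski dense in $Sp(2g,\mathbb{R})$. Since $Sp_{2g}$ is simply connected and absolutely almost simple over $\mathbb{Q}$, strong approximation (Matthews--Vaserstein--Weisfeiler) then yields a finite set of primes $S(\mathfrak{R})$, depending only on $\mathfrak{R}$, such that $\Gamma(p)=Sp(2g,\mathbb{F}_p)$ for every $p\notin S(\mathfrak{R})$; Witt's extension theorem gives transitivity on $\mathbb{F}_p^{2g}\setminus\{0\}$, so for such ``good'' primes there is a single ergodic component of mass $1$.

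For each of the finitely many primes $p\in S(\mathfrak{R})$, the $\mathcal{M}$-orbits in $Q_p^{(\pi_0)}$ form a finite family of positive integer sizes and therefore produce ergodic components of definite positive mass $m(p)>0$; taking
\[m_0:=\min\bigl(\{1\}\cup\{m(p):p\in S(\mathfrak{R})\}\bigr)>0\]
yields the required uniform lower bound. The delicate point I expect to require most care is invoking strong approximation in a form that makes the exceptional set $S(\mathfrak{R})$ truly intrinsic to $\mathfrak{R}$ (independent of the auxiliary choice of $\pi_0$ and $\gamma$); the subsemigroup-to-subgroup trick in $Sp(2g,\mathbb{F}_p)$ is then what allows one to transfer surjectivity of the group reduction mod $p$ (given by strong approximation) to surjectivity of the monoid reduction mod $p$, which is what actually controls the ergodic decomposition.
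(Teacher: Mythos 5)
Your proposal is correct in outline but takes a genuinely different route from the paper's at the group-theoretic heart of the argument. The paper likewise passes to the induced map $\mathcal{R}^{\gamma}_{\mathbb{T}^d}$ and shows, via a Lebesgue density point combined with the bounded distortion estimate \eqref{bounded-distortion}, that every invariant set of positive measure has the form $\Delta_\gamma\times U$ with $U$ invariant under the group $O$ generated by the return matrices $B_{\gamma\ell\gamma}$; but it then quotes Guti\'errez-Romo's theorem \cite{Gutierrez} (the Zorich conjecture) that $O$ has some finite index $k_0$ in $Sp(2g,\mathbb{Z})$ and concludes by the index inequality $|O.y|\geq [Sp(2g,\mathbb{Z}):\mathrm{Stab}(y)]/k_0$, which yields the single uniform constant $m_0=1/k_0$ for \emph{every} prime simultaneously, with no exceptional set. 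You instead use Zariski density plus strong approximation (together with the semigroup-to-subgroup observation in the finite group $Sp(2g,\mathbb{F}_p)$, which is correct and is the right way to pass from the group statement to the monoid acting on the fiber). This gives a stronger conclusion for all but finitely many $p$ — a single ergodic component of full mass — at the cost of an ad hoc minimum over the exceptional primes; it also requires less input than the full Zorich conjecture, since Zariski density of the Rauzy--Veech groups was available earlier. Your concern about the dependence of the exceptional set on the auxiliary choice of $\pi_0$ and $\gamma$ is harmless: any fixed choice produces a valid $m_0(\mathfrak{R})$, because the ergodic components of the original measure correspond to those of the induced one.

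Two steps need tightening. The identification of ergodic components with $\mathcal{M}$-orbits is not a formal consequence of ergodicity of the base: for general finite skew products over an ergodic base it fails (finite group extensions of rotations need not decompose into products), and this identification is exactly where the paper's density-point and bounded-distortion argument does its work. You should run that argument explicitly for $\mathcal{R}^{\gamma}_{\mathbb{T}^d}$ rather than label it standard. Second, ``pinching and twisting imply Zariski density'' is not the correct attribution; quote Zariski density of the Rauzy--Veech groups directly (it follows a fortiori from \cite{Gutierrez}, and partial results in this direction appear in \cite{Avila_Matheus_Yoccoz}).
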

\begin{proof} Let $\pi$ and $\gamma$ be as before. We first show the statement for the measure $\mu_\gamma \times \nu_{p}^{(\pi)}$. The corresponding statement for the original measure $\sum_{\pi \in \mathfrak{R}}\mu_{Z}\big\vert_{\Delta_{\pi}} \times \nu^{(\pi)}_p$ follows as the ergodic components of $\mu_\gamma \times \nu_{p}^{(\pi)}$ are obtained from those of  $\sum_{\pi \in \mathfrak{R}}\mu_{Z}\big\vert_{\Delta_{\pi}} \times \nu^{(\pi)}_p$ via acceleration of $\mathcal{R}^{Z}_{\mathbb{T}^d}$. Let $E$ be a $\mu_\gamma \times \nu_{p}^{(\pi)}$-positive measure invariant subset for the action of $\mathcal{R}^{\gamma}_{\mathbb{T}^d}$. Ergodicity of $(\mathcal{R}^{\gamma}, \mu_{\gamma})$ implies that the projection of $E$ onto $\Delta_\gamma$ has full $\mu_\gamma$ measure. For $x\in \Delta_\gamma$ let $E_x:=\{ y \in Q_p: (x, y) \in E\}$. As there exists only finitely many subsets of the set $Q_p$, there exists a fixed subset $V \subset Q_p$ with the property that $F_V:=\{x \in \Delta_\gamma : E_x =V\}$ has positive $\mu_\gamma$ measure. Let $x_0 \in F_V$ be a Lebesgue-density point. Then
\begin{equation}
    \lim_{n \to \infty}\frac{\mu_\gamma( \Delta_n(x_0) \cap F_V)}{\mu_\gamma(\Delta_n(x_0))}=1,
\end{equation}
where $\Delta_n(x_0)$ is the element of the partition $\{\Delta^{\bf l}: {\bf l} \in \Omega^{n}\}$ of $\Delta_\gamma$ that contains $x_0$. Bounded distortion \eqref{bounded-distortion} then yields 
\begin{equation}
    \lim_{n\to \infty} \mu_\gamma\big((\mathcal{R}^{\gamma})^n(\Delta_n(x_0) \cap F_V)\big)=1.
\end{equation}
As $E$ is invariant, we have  
\begin{equation}
    E \supset(\mathcal{R}^{\gamma}_{\mathbb{T}^d})^{n}\big((\Delta_n(x_0) \cap F_V) \times V\big)= \big((\mathcal{R}^{\gamma})^{n}(\Delta_n(x_0) \cap F_V), B_n(x_0). V\big).
\end{equation}
Therefore, by letting $n$ go to infinity and using the fact that the fibers $E_x$ of $E$ have the same cardinality almost everywhere we get that $E= \Delta_\gamma \times U$ for some $U \subset Q_p$. Such $U$ must then be invariant under the action of the group $O$ generated by matrices $B^{(\ell)}=B_{\gamma \ell \gamma}$. 

Gutierrez showed in \cite{Gutierrez} that $O$ is of some finite index $k_0$ in $Sp(2g, \mathbb{Z})$, the group of matrices preserving the symplectic form on $H(\pi)$ and the lattice $\mathbb{Z}^d \cap H(\pi)$. Pick an arbitrary element $y \in Q_p$. $Sp(2g, \mathbb{Z})$ acts transitively on $Q_p \subset \mathbb{T}^{2g}_{\pi}$. Hence 
\begin{equation}
  [Sp(2g, \mathbb{Z}): Stab(y)]=|Sp(2g, \mathbb{Z}). y|= |Q_p|.
\end{equation}
Similarly,
\begin{equation}
    |O.y|= [O: O \cap Stab(y)] \geq \frac{[Sp(2g,\mathbb{Z}): Stab(y)]}{[Sp(2g, \mathbb{Z}): O]}= \frac{1}{k_0} |Q_p|.
\end{equation}
In particular, this shows that $\mu_\gamma \times \nu_{p}^{(\pi)}(E) \geq \frac{1}{k_0}$. Since this holds for every invariant set $E$, it concludes the proof of the proposition with $m_0=\frac{1}{k_0}$. 
\end{proof}

\subsection{Twisted Birkhoff sums}

In this subsection, we show that the rate of growth of twisted (exponential) Birkhoff sums of a function $f$ is bounded by the rate of growth of the norm of the evolution of $f$ under the twisted cocycle. To this end, we need the following definitions.

\begin{definition}
For a dynamical system $(T,X)$ and measurable functions $\zeta: X \to \mathbb{R},f:X\to \mathbb{C}$, the exponential Birkhoff sum of $f$ with respect to the twist parameter $\zeta$  up to time $n$ is given by
\begin{equation}
    S_{n}(f, \zeta,x):=\sum\limits_{k=0}^{n-1} \exp\big(2\pi i S_{k}(\zeta,x)\big) f(T^{k}(x))
\end{equation}
where $S_k(\zeta,x)=\sum\limits_{j=0}^{k-1}\zeta(T^j(x))$ is the ordinary Birkhoff sum of $\zeta.$
\end{definition}

\begin{definition} \label{col}
    For a matrix with positive entries, we let
        \begin{equation}
            \col(A):= \max _{i,j,k}  \frac{A_{ij}}{A_{kj}}.
        \end{equation}
    \end{definition}
Notice that for a matrix $B$ with nonnegative entries we have 
\begin{equation}
        \col(AB) \leq \col(A).
\end{equation}
We remark that if $h \in \mathbb{R}^{d}_{+}$ then
\begin{equation}
        \max\limits_{i, j} \frac{\langle Ah, e_i\rangle}{\langle Ah, e_j \rangle} \leq \col(A).
    \end{equation}

\begin{prop}\label{exponent-comparison}
For Lebesgue almost every $\lambda \in \Delta$ and every $f \in \mathbb{C}^d$ the following inequality holds
\begin{equation}
     \limsup\limits_{N\to \infty} \sup\limits_{x\in I} \frac{\log \big\vert S_N(f,\zeta,x)\big \vert}{\log N} \leq 
     \max\bigg\{0, \frac{1}{\chi_1} \limsup\limits_{m\to \infty} \frac{\log \|\tw^{Z}_m(\lambda, \pi, \zeta). f\|}{m}\bigg\},
\end{equation}
where $\chi_1$ denotes the top Lyapunov exponent of the Rauzy-Veech-Zorich cocycle. 
    
\end{prop}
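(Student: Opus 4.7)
The plan is to exploit the fact, built into the definition of $\tw^{Z}$, that a single entry of $\tw^{Z}_n(\lambda,\pi,\zeta).f$ equals the twisted Birkhoff sum of the locally constant function encoded by $f$ over one complete Rauzy--Veech Rokhlin tower at renormalization level $n$. Given $x\in I$ and a large $N$, I would decompose the orbit segment $\{x,Tx,\ldots,T^{N-1}x\}$ into a concatenation of such complete towers at various levels (plus short boundary pieces); the cocycle property for twisted sums then represents $S_N(f,\zeta,x)$ as a sum of terms, each of which is a unit-modulus phase times one entry of some $\tw^{Z}_k(\lambda,\pi,\zeta).f$. Taking moduli and summing will yield $|S_N(f,\zeta,x)|\leq C\sum_k\|\tw^{Z}_k(\lambda,\pi,\zeta).f\|$ over the levels $k$ used, and the stated asymptotic follows by comparing growth rates.

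The central step is a greedy Ostrowski-type expansion of the orbit segment. For Lebesgue a.e.\ $\lambda$ the Oseledets theorem applied to the Zorich cocycle gives $\tfrac{1}{n}\log q^{(n)}_{\min}\to\chi_1>0$ for the tower heights $q^{(n)}_\alpha$. I would choose $n=n(N)$ to be the largest level with $q^{(n)}_{\min}\leq N$, so that $n(N)\sim \log N/\chi_1$. To keep the decomposition efficient, I would first accelerate Zorich renormalization to the return times to a positive cylinder $\Delta_\gamma$; by the classical Kerckhoff--Veech arguments these occur with positive density for a.e.\ $\lambda$, and the corresponding Rauzy--Veech matrices have $\col$ bounded by some $K=K(\gamma)$. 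The properties of $\col$ recorded above then force the ratio $q^{(k)}_{\max}/q^{(k)}_{\min}$ to stay $\leq K$ at every accelerated level, so a segment of length $<q^{(k+1)}_{\min}$ contains at most a constant $C_d=C_d(K,d)$ many complete level-$k$ towers. Running the greedy extraction at levels $k=n,n-1,\ldots,0$ then produces at most $C_d(n+1)=O(\log N)$ complete-tower pieces together with a leftover of bounded length.

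Applying the triangle inequality to the pieces yields
\[
\sup_{x\in I}|S_N(f,\zeta,x)| \;\leq\; C_d\sum_{k=0}^{n(N)} \|\tw^{Z}_k(\lambda,\pi,\zeta).f\|.
\]
Set $\vartheta:=\limsup_{m\to\infty} m^{-1}\log\|\tw^{Z}_m(\lambda,\pi,\zeta).f\|$. When $\vartheta\leq 0$, for every $\varepsilon>0$ the right-hand side is dominated by $C_\varepsilon e^{\varepsilon n(N)}=O(N^{\varepsilon/\chi_1})$; when $\vartheta>0$, the sum is geometric and bounded by $C_\varepsilon e^{n(N)(\vartheta+\varepsilon)}=O(N^{(\vartheta+\varepsilon)/\chi_1})$. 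Dividing by $\log N$ and sending $\varepsilon\downarrow 0$ delivers both branches of the $\max$ on the right-hand side of the proposition.

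The main obstacle is making the Ostrowski decomposition logarithmically short in $N$. Without comparability of tower heights at the chosen scales, the greedy algorithm could peel off many complete level-$k$ towers in a single stage, inflating the piece count to a power of $N$ and swamping the desired bound. Securing the $O(1)$-pieces-per-level control requires the passage to the positive-entries acceleration with bounded $\col$, for which one invokes the standard Kerckhoff--Veech density result; once this uniform balancing is in place, matching scales of $N$ to $n(N)\sim \log N/\chi_1$ via Oseledets is routine.
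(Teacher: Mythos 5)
Your overall strategy---decompose the orbit segment into complete Rokhlin towers at the various renormalization levels, bound the twisted sum over each complete tower by the corresponding vector $\tw^{Z}_k(\lambda,\pi,\zeta).f$, and convert the top level $n(N)$ into $\log N/\chi_1$ via balancedness of the heights---is essentially the paper's (which takes the tower decomposition from Marmi--Moussa--Yoccoz, \S 2.2.3, and passes to return times to a cylinder $\Delta_\gamma$ with $B_\gamma>0$ to get the $L$-balanced property). However, there is a genuine gap in the step you yourself identify as central: the claim that, after accelerating to levels with bounded $\col$, a segment of length $<q^{(k+1)}_{\min}$ contains at most a constant $C_d(K,d)$ many complete level-$k$ towers. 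Bounded $\col$ controls the ratio $q^{(k)}_{\max}/q^{(k)}_{\min}$ \emph{within} each level but says nothing about the ratio \emph{between consecutive} levels: the number of complete level-$k$ towers the greedy algorithm peels off at stage $k$ is of order $q^{(k+1)}_{\min}/q^{(k)}_{\min}$, i.e.\ it is governed by the norm of the transition matrix between consecutive accelerated levels (the ``partial quotient''). Already for $d=2$ (circle rotations) these partial quotients are unbounded for Lebesgue-almost every parameter, so the uniform constant $C_d$ and the resulting bound $\sup_x|S_N|\leq C_d\sum_k\|\tw^{Z}_k(\lambda,\pi,\zeta).f\|$ are not justified as stated.

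The repair is exactly what the paper isolates as its condition (i): for a.e.\ $\lambda$ the accelerated times $n_j$ can be chosen so that $\|B^{Z}_{n_{j+1}-n_j}(\mathcal{R}^{Z}_{n_j}(\lambda,\pi))\|_1\leq C_\epsilon e^{\epsilon n_j}$, which follows from Oseledets regularity of the return cocycle to $\Delta_{\gamma\cdot\gamma}$ (forcing $\log\|B^{Z}_{n_{j+1}-n_j}\|=o(n_j)$). The per-level tower count is then merely subexponential in $n_j$ rather than $O(1)$, but that suffices: the factor $e^{\epsilon n_j}$ is absorbed into the $\epsilon$ you already carry, and both branches of the max survive. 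A secondary point: you should index the estimates by the Zorich time $n_j$ rather than by the accelerated index $j$, since both $\chi_1$ and the limsup $\vartheta$ in the statement are normalized by Zorich time; as written, your $n(N)\sim\log N/\chi_1$ conflates the accelerated exponent with $\chi_1$, and the ratio $\vartheta/\chi_1$ would not come out correctly without an additional comparison of the two time scales.
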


\begin{proof}

Let $(\lambda, \pi)$ be infinitely renormalizable (under Zorich renormalization). Let $n_1<n_2< \cdots$ be a sequence of positive integers with the following properties 
\begin{itemize}
    \item[i)] {\it (Subexponential growth of partial quotients) for every $\epsilon>0$ there exists $C_\epsilon>0$ such that}
    \begin{equation}
    \|B^{Z}_{n_{j+1}-n_j}\big(\mathcal{R}_{n_{j}}^{Z}(\lambda, \pi)\big)\|_{1} \leq C_\epsilon e^{n_j\epsilon},
    \end{equation}
    \item[ii)] {\it ($L$-balanced property) for $h=(1,1,\ldots, 1)^{t} \in \mathbb{R}^d, h^{(k)}:= B^{Z}_{n_k}(\lambda, \pi).h$ we require that}
    \begin{equation} 
        \min_{\alpha \in \mathcal{A}} |h^{(k)}_{\alpha}| \geq \frac{1}{L} \max_{\alpha \in \mathcal{A}} |h^{(k)}_{\alpha}|.
    \end{equation}
\end{itemize}

We first show that the above conditions can be guaranteed for a full measures set of $(\lambda, \pi)$. Let $\mathfrak{R}$ be the Rauzy diagram of $\pi$ as before and take $\gamma \in \Pi(\mathfrak{R})$ be a loop starting and ending at $\pi$ such that all the entries of $B_\gamma$ are positive. Furthermore, we assume that the starting and ending 
arrows of $\gamma$ are different. Let $\Delta_{\gamma.\gamma}$ be as in Section \ref{sec: Twisted Cocycle} and $(\lambda, \pi)$ be such that its renormalization trajectory visits $\Delta_{\gamma.\gamma}$ infinitely many times. Let $m_i$ be the $i-th$ visiting time of $\Delta_{\gamma.\gamma}$ by $(\lambda, \pi)$ under $\mathcal{R}_{Z}$. Then we take $n_i= m_i + |\gamma|_{Z}$, where $|\gamma|_Z$ denotes the 
number of Zorich operations needed to represent $\gamma$. That is, $n_i$ is the first time the trajectory of $(\lambda, \pi)$ visits $\Delta_\gamma$ after having visited $\Delta_{\gamma. \gamma}$ for the $i$-th time. By Definition \ref{col} and the fact that 
\begin{equation}
    B^{Z}_{n_i}(\lambda, \pi)= B_\gamma B^{Z}_{m_i}(\lambda, \pi),
\end{equation}
we can ensure, by taking $L= col(B_\gamma)$, that condition $ii)$ is satisfied for this choice of $\{n_i\}_{i=1}^{\infty}$. In order to guarantee condition $i)$, we demand that $(\lambda, \pi)$ be such that $\mathcal{R}_{Z}^{n_1}(\lambda, \pi) \in \Delta_{\gamma.\gamma}$ is Oseledets regular (see Subsection \ref{basic-ergodic-theory}) for the acceleration of the Zorich cocycle defined over $\mathcal{R^{\gamma.\gamma}}$. Then we see that
\begin{equation}
    \lim_{i \to \infty} \frac{\log \|B^{Z}_{m_{i+1}-m_i}(\lambda, \pi)\|}{i}=0
\end{equation}
which easily implies condition $i)$.

We now turn to the proof of the proposition under the aforementioned full measure conditions. Let $(\lambda^{(k)}, \pi^{(k)}):= \mathcal{Q}^{n_k}_Z(\lambda, \pi)$. We denote by $T^{(k)}$ the IET corresponding to 
$(\lambda^{(k)}, \pi^{(k)})$ and let $I^{(k)}:=[0, |\lambda^{(k)}|)$ be its domain of 
definition. Recall that $I^{(k)}$ admits a natural partition to subintervals $I^{(k)}_{\alpha}, \alpha \in \mathcal{A}$. We call the set
\begin{equation}
\bigcup\limits_{j=0}^{h^{(k)}_\alpha -1}T^{j}(I^{(k)}_\alpha),
\end{equation}
the $k$-tower corresponding to the letter $\alpha \in \mathcal{A}$. A $k$-tower orbit segment is then a trajectory of the form $\{y, T(y), \ldots, T^{h^{(k)}_\alpha -1}(y) \}$, where $y\in I^{(k)}_{\alpha}$ for some $\alpha$. We now aim to decompose every trajectory $\{x, T(x), \ldots, T^{N-1}(x)\}$ into a union of special orbit segments, each consisting fully of unions of $k$-tower orbit segments for some $k$. To this end, we use the construction of \cite[\S 2.2.3]{MMYRothtype}. While we do not reproduce their arguments here, we recall a summary of their exposition. Given $x \in I, N\in \mathbb{N}$,
the authors find $k\geq 0$ and a sequence 
\begin{equation}
    0=r^{-}_0 \leq r^{-}_1 \leq \cdots \leq r^{-}_{k+1}=r^{+}_{k+1}\leq r^{+}_{k} \leq \cdots
    \leq r^{+}_0=N
\end{equation}    
in such a way that $\{T^{r^{+}_{j+1}}(x), \ldots, T^{r^{+}_{j}-1}(x)\}$ and $\{T^{r^{-}_{j}}(x), \ldots, T^{r^{-}_{j+1}-1}(x)\}$ are each unions of at most $\|B^{Z}_{n_{j+1}-n_j}\big(\mathcal{R}_{n_{j}}^{Z}(\lambda, \pi)\big)\|_{1}$ many $j$-tower orbit segments where for a matrix $A$ we let $\|A\|_1:= \sum \limits_{i,j} |A_{ij}|$. We can now express the twisted Birkhoff sum as
    \begin{align}
         S_N(f, \zeta, x) &= \sum \limits_{j=0}^{k} \exp\big(2\pi i S_{r^{-}_j}(\zeta, x)\big) S_{r^{-}_{j+1}-r^{-}_{j}-1}\big(f, \zeta, T^{r^{-}_{j}(x)}\big)\\
        &+ \sum \limits_{j=0}^{k} \exp\big(2\pi i S_{r^{+}_{j+1}}(\zeta, x)\big) S_{r^{+}_{j}-r^{+}_{j+1}-1}\big(f, \zeta, T^{r^{+}_{j+1}(x)}\big).
    \end{align}
As the twisted Birkhoff sum over a $j$-tower is, by definition, bounded by $\|\mathcal{B}^{Z}_{n_{j}}(\lambda, \pi, \zeta). f\|$ we get
\begin{equation}
        |S_N(f, \zeta, x)| \leq \sum \limits_{j=0}^{k} 2\|B^{Z}_{n_{j+1}-n_j}\big(\mathcal{R}_{n_{j}}^{Z}(\lambda, \pi)\big)\|_{1}\cdot\|\mathcal{B}^{Z}_{n_{j}}(\lambda, \pi, \zeta). f\|.
\end{equation}
If we denote 
\begin{equation}
    \beta:=\limsup\limits_{m\to \infty} \frac{\log \|\tw^{Z}_m(\lambda, \pi, \zeta). f\|}{m},
\end{equation}
then for every $\epsilon>0$ there exists $C'_\epsilon>0$ such that  
\begin{equation}
    \|\mathcal{B}^{Z}_{n_{j}}(\lambda, \pi, \zeta). f\| \leq C'_\epsilon e^{(\beta+\epsilon)n_j}.
\end{equation}
The above inequalities yield
\begin{equation}
        |S_N(f, \zeta, x)| \leq 2\sum \limits_{j=0}^{k} C^{}_{\epsilon}C_{\epsilon}^{\prime} e^{n_{j}(\beta+2\epsilon)},
\end{equation}
which implies that if $\beta<0$, then
\begin{equation}
    |S_N(f, \zeta, x)| \leq kM,
\end{equation}
for some constant $M$ independent of $N$. On the other hand, for $\beta>0$ we get
\begin{equation}
    |S_N(f, \zeta, x)| \leq 2kM_\epsilon e^{n_k(\beta+2\epsilon)},
\end{equation}
for some $M_\epsilon>0$. Since by the definition of $k$ the orbit $\{x, T(x), \ldots, T^{N-1}(x) \}$ contains at least one $k$-tower and the lengths of $k$-towers are $L$-balanced 
\begin{equation}
    N \geq \min_{\alpha \in \mathcal{A}} \vert h^{(k)}_{\alpha}\vert \geq \frac{1}{L} \|h^{(k)}\| \geq   \frac{1}{L} C'_{\lambda, \epsilon} e^{n_k(\chi_1-\epsilon)}.
\end{equation}
Letting $\epsilon$ go to zero implies the desired result. 

\end{proof}

\section{A suspension construction}\label{sec: IETs with positive exponent}

In this section, we introduce a foliation $\f_p$ by line segments of a subspace of $\Delta$ corresponding to each prime $p$. We then give a method to construct a translation surface $X$ out of $\n\in \mathbb{Z}^d$ and a
leaf of the foliation $\f_p$. It allows us to relate the exponential sums with twist factors $\n \in \mathbb{Z}^d$ to the ordinary Birkhoff sums of a one parameter family of IETs (see Definition \ref{Stopping time}, Lemma \ref{(p,n)-iterate}) which correspond to Poincaré return maps to a fixed transversal section of the one parameter family of directional translation flows on $X$.

Let $\pi=(\pi_t, \pi_b)$ an irreducible permutation such that $\pi_b \circ \pi_t^{-1}(d)=1$. Hereinafter, we let $p$ be a prime and $\n \in \mathbb{Z}^d$ such that $p\nmid n_{\alpha_t}$.  

We consider IETs corresponding to $\pi$ and the following length parameters 

\begin{equation}
    \Delta_{p}:= \big\{\lambda \in \Delta | \lambda_{\alpha_t}> \frac{p-1}{p}\big\}.
\end{equation}
Let $\mathcal{A}_{t}:=\mathcal{A}\setminus \{\alpha_t\}$. Then we define the mapping $\f_p:\Delta^{\mathcal{A}_t} \times [0,1) \to \Delta_p$ by
\begin{equation}
    \f_p(\hat{\lambda}, s):= \big(\frac{s}{p}\hat{\lambda}, (1-s)+\frac{s(p-1)}{p}\big).
\end{equation}

$\f_p$ induces a foliation on $\Delta_{p}$ by a family of line segments.

\begin{definition}\label{Stopping time} For $\lambda \in \Delta_p$ we define the $(p,\n)$-stopping time of a point $x \in I$ under $T_{\lambda, \pi}$ to be the least positive integer $\tpn(x)$ (whose existence is guaranteed by Lemma \ref{(p,n)-iterate}) such that 
\begin{equation} \label{tpn eq}
    \sum\limits_{j=0}^{\tpn(x)-1} \n(T_{\lambda, \pi}^{j}(x)) \equiv 0 \pmod{p}
\end{equation}
    
\end{definition}

\begin{lemma} \label{(p,n)-iterate}
    Let $\lambda \in \Delta_p$ and $\n \in \mathbb{Z}^d$ such that $p \nmid n_{\alpha_{t}}$. Then the following hold
    \begin{itemize}
        \item[(i)] $\tpn(x)$ is well defined and everywhere bounded above by $2p-1$,
        \item[(ii)] the map $\spn(\lambda, \pi): x\mapsto T_{\lambda,\pi}^{t}(x)$, where $t=\tpn(x)$, is a $p(d-1)+1$-IET,
        \item [(iii)] for every $ \alpha \in \mathcal{A}_t$, there exists a unique integer $0\leq \ell_\alpha\leq p-1$ so that for every $x\in \bigcup\limits_{j=0}^{p-1}T^{-j}(I_\alpha)$, $\tpn(x)\overset{p}{\equiv} \ell_\alpha$.
    \end{itemize} 
\end{lemma}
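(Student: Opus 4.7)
The plan is to exploit the very constrained orbit structure of $T := T_{\lambda, \pi}$ under the hypotheses $\pi_b(\alpha_t) = 1$, $\pi_t(\alpha_t) = d$, and $\lambda_{\alpha_t} > (p-1)/p$. Concretely, $I_{\alpha_t} = [1 - \lambda_{\alpha_t}, 1)$ and $T$ restricts on $I_{\alpha_t}$ to the translation $y \mapsto y - (1 - \lambda_{\alpha_t})$, whereas every small interval $I_\beta$ (for $\beta \neq \alpha_t$) is mapped by $T$ into $[\lambda_{\alpha_t}, 1) \subset I_{\alpha_t}$. From this I would extract two structural facts: \textbf{(a)} no orbit contains two consecutive iterates outside $I_{\alpha_t}$; and \textbf{(b)} if $T^j(x) \in I_\beta$ with $\beta \neq \alpha_t$, then $T^{j+1}(x), \ldots, T^{j+p-1}(x) \in I_{\alpha_t}$. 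Statement (b) follows because $T^{j+1}(x) \in [\lambda_{\alpha_t}, 1)$ and subsequent iterates shift leftward by $1 - \lambda_{\alpha_t}$; remaining in $I_{\alpha_t}$ for $p - 1$ steps requires $\lambda_{\alpha_t}/(1-\lambda_{\alpha_t}) > p - 1$, which is precisely the hypothesis. Together (a) and (b) force the crucial fact that among the first $p$ iterates $x, T(x), \ldots, T^{p-1}(x)$, at most one lies in a small interval.

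With this in hand, I split into two cases. In the \emph{fully-$\alpha_t$} case, all of $x, T(x), \ldots, T^{p-1}(x)$ lie in $I_{\alpha_t}$; then $S_k(x) := \sum_{j=0}^{k-1}\n(T^j(x)) = k\, n_{\alpha_t}$ for $k \leq p$, and since $p \nmid n_{\alpha_t}$ one concludes $\tpn(x) = p$. In the \emph{single-excursion} case, there is a unique $j \in \{0, \ldots, p-1\}$ with $T^j(x) \in I_\alpha$ for some $\alpha \in \mathcal{A}_t$; then (b) forces $S_k = k\,n_{\alpha_t}$ for $k \leq j$ and $S_{j+1+m} = (j+m)\,n_{\alpha_t} + n_\alpha$ for $m = 0, 1, \ldots, p-1$. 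Setting $\mu_\alpha := -n_\alpha\, n_{\alpha_t}^{-1} \pmod{p}$, solving $S_t \equiv 0 \pmod{p}$ for the smallest $t \geq 1$ yields
\[
\tpn(x) = j + 1 + r, \qquad r := (\mu_\alpha - j) \bmod p \in \{0, \ldots, p-1\}.
\]
The bounds $j, r \leq p - 1$ immediately give $\tpn(x) \leq 2p - 1$, proving (i); and the observation that $\tpn(x) \equiv \mu_\alpha + 1 \pmod{p}$ depends only on $\alpha$ proves (iii) with $\ell_\alpha := (\mu_\alpha + 1) \bmod p$.

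For (ii), the partition of $I$ into the fully-$\alpha_t$ piece $\Delta_0 = [p(1-\lambda_{\alpha_t}), 1)$ together with the single-excursion pieces $\Delta_{\alpha, j} = j(1 - \lambda_{\alpha_t}) + I_\alpha$ (for $\alpha \in \mathcal{A}_t$ and $0 \leq j \leq p-1$) consists of exactly $1 + p(d-1)$ pairwise disjoint sub-intervals covering $I$. On each piece $\tpn$ is constant, so $\spn = T^{\tpn}$ is a composition of finitely many translations, hence itself a translation. That the images under $\spn$ tile $I$ (so that $\spn$ is a genuine IET rather than just a piecewise-translation map) will follow from the realization of $\spn(\lambda, \pi)$ as a Poincaré return map of the translation flow on the surface $X$ constructed in the forthcoming suspension subsection. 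The most delicate step is the quantitative input (b); once it is established, everything else reduces to a careful bookkeeping of partial sums modulo $p$.
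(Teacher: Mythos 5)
Your treatment of (i) and (iii) is correct and follows essentially the same route as the paper: the same partition of $I$ into $\tilde{I}$ and the preimages $T^{-j}(I_\alpha)$, the same ``at most one excursion out of $I_{\alpha_t}$ among $p$ consecutive iterates'' structure, and the same mod-$p$ bookkeeping. Your $\mu_\alpha$ is the paper's $m_\alpha$ (the unique $m\in\{0,\dots,p-1\}$ with $p\mid n_\alpha+m\,n_{\alpha_t}$), and your formula $\tpn(x)=j+1+\big((\mu_\alpha-j)\bmod p\big)$ reproduces the paper's two cases $m_\alpha+1$ (for $j\le m_\alpha$) and $m_\alpha+p+1$ (for $j>m_\alpha$). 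Your quantitative step (b), with $p-1$ forward iterates in $I_{\alpha_t}$ under the stated threshold $\lambda_{\alpha_t}>(p-1)/p$, is exactly what the computation of $S_{j+1+m}$ for $m\le p-1$ requires, and is checked correctly.

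The one genuine gap is in (ii). You establish that $\spn$ is a translation on each of the $p(d-1)+1$ continuity intervals, but you defer the bijectivity (that the images tile $I$, so that $\spn$ is an actual IET) to the realization of $\spn$ as a Poincar\'e return map on the surface $X$ of the next subsection. That deferral is circular: $X$ is constructed precisely by gluing the top and bottom edges of a square via $\spn$, so the construction presupposes that $\spn$ is a bijection. The paper instead closes this directly: for each $\alpha\neq\alpha_t$, the set $J_\alpha=\bigcup_{j=0}^{p-1}T^{-j}(I_\alpha)$ is mapped by $\spn$ onto $\bigcup_{j=1}^{p}T^{j}(I_\alpha)$ (as $j$ runs over $\{0,\dots,p-1\}$ the exponents $\tpn(x)-j$ run exactly over $\{1,\dots,p\}$), while $\tilde{I}$ is translated onto $[0,|\tilde{I}|)$; these images are pairwise disjoint and cover $I$ because $T$ is injective and $T^{j}(I_\alpha)\subset I_{\alpha_t}$ for $1\le j\le p$. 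Alternatively, one can observe that $\spn$ is the first return map to $I\times\{0\}$ of the skew product $(x,k)\mapsto(T(x),k+\n(x))$ on $I\times\mathbb{Z}/p\mathbb{Z}$, which is itself an invertible piecewise translation with uniformly bounded forward and backward return times, so its return map is automatically a bijection off a finite set. Either fix is short, but as written the bijectivity of $\spn$ is not proved.
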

\begin{proof}
Let  $I_\alpha$ be as in \ref{sec: Prelim}. Since $\lambda_{\alpha_t}> \frac{p}{p+1}$ and that $\pi_b \circ \pi_t^{-1}(d)=1$, it is clear that for every nonzero $-p\leq j \leq p$ $T^{j}(I_\alpha) \subset I_{\alpha_t}$. 

For $x\in I_\alpha$, the Birkhoff sums of $\n$ (viewed as a locally constant function) along the orbit of $x$ up to the $p$-th iterate comprise the following numbers
\begin{equation}
    n_\alpha, n_\alpha+n_{\alpha_t}, \ldots, n_\alpha+ (p-1)n_{\alpha_t}.
\end{equation}
One and only one of the above is divisible by $p$ (as $p \nmid n_{\alpha_t}$), which we denote by $n_\alpha + m_\alpha n_{\alpha_t}$.

Therefore, for $0\leq j \leq p-1$ and $x\in T^{-j}(I_\alpha) \subset I_{\alpha_t}$ we can let
\begin{equation}
\tpn(x):=
\begin{cases}
    m_\alpha+1  &\textit{for} \;\;\;\; 0\leq j\leq m_\alpha,\\
    m_\alpha+p+1  & \textit{otherwise.} 
\end{cases}    
\end{equation}
For $x \in \Tilde{I}:=[0,1) \setminus \bigcup\limits_{j=0}^{p-1}T^{-j}([0,1)\setminus I_{\alpha_t})$, 
the iterates until time $p-1$ of $x$ all remain inside the largest interval $I_{\alpha_t}$. Hence $\tpn(x)=p$ satisfies Definition \ref{tpn eq} in this case. In view of the fact that $\tpn$ is constant on each of the intervals $\Tilde{I}$ and $T^{-j}(I_\alpha)$ for $\alpha \in \mathcal{A} \setminus \{\alpha_t\}$ and $0\leq j \leq p-1$, it may be readily 
verified that $\spn$ is a translation on each of these 
intervals. To prove injectivity we remark that for any $\alpha \in \mathcal{A} \setminus \{\alpha_t\}$ the set
\begin{equation}
    J_\alpha:= \bigcup \limits_{j=0}^{p-1}T^{-j}(I_\alpha)
\end{equation}
will be mapped under $\spn$ to the set
\begin{equation}
    \Tilde{J}_\alpha:= \bigcup \limits_{j=1}^{p} T^{j}(I_\alpha)
\end{equation}
which is a union of $p$ disjoint intervals. The interval $\Tilde{I}$ on the other hand, gets mapped (via a translation) to $[0, |\Tilde{I}|)$. Therefore, $\spn$ is an interval exchange transformation and its number of intervals is $p(d-1)+1$.
\end{proof}

Let $\lambda,\pi,p,\n$ be as before and $\zeta_k:=\frac{k}{p}\n$. We may drop the dependence on $\lambda, \pi$ whenever it arises no confusion. Given a locally constant function $f$ with respect to  $T$ and  $0\leq k\leq p-1$, 
\begin{equation}
     \Bir_k(f)(x):= S_{\tpn(x)}(f, \zeta_k, x)= \sum\limits_{j=0}^{\tpn(x)-1} \exp\big(2\pi i S_{j}(\zeta_k,x)\big) f(T^{j}(x))
\end{equation}
defines a locally constant function with respect to $\spn$. It is easy to verify that $\Bir_k:\mathbb{C}^d\to \mathbb{C}^{p(d-1)+1}$ is a linear transformation. We emphasize that the exponential Birkhoff sums in the definition of $\Bir_k$ are with respect to the twist parameter $\zeta_k$.

\begin{lemma}(Untwisting lemma)\label{Small kernel surjection}
For each $1\leq k \leq p-1$ the kernel of the linear map $\Bir_k$ is one dimensional and the linear transformation 
    
    \begin{equation}
      \bigoplus\limits_{k=0}^{p-1}\Bir_k: \bigoplus\limits_{k=0}^{p-1}\mathbb{C}^{d} \to \mathbb{C}^{p(d-1)+1}
    \end{equation}
is surjective. 
    
\end{lemma}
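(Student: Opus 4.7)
My plan is to recast $\bigoplus_{k=0}^{p-1}\Bir_k$ as the Birkhoff-sum-to-first-return map for a canonical $\mathbb{Z}/p$-extension of $T$; this will make surjectivity essentially tautological, and the kernel dimension follows from a direct geometric-series computation.

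First I would introduce the skew product
\[
\hat T:I\times \mathbb Z/p\to I\times \mathbb Z/p,\qquad \hat T(x,m)=(T(x),\,m+\n(x)\bmod p),
\]
where $\n(x)=n_\alpha$ on $I_\alpha$; this is naturally an IET on the $dp$ cells $I_\alpha\times\{m\}$. By Definition \ref{Stopping time}, $\tpn(x)$ is exactly the first return time of $(x,0)$ to $I\times\{0\}$ under $\hat T$, with return map $(\spn(x),0)$. For the character $\chi_k(m)=\exp(2\pi i km/p)$ of $\mathbb Z/p$ the lift $\tilde f_k(x,m):=\chi_k(m)f(x)$ satisfies $\sum_{\ell=0}^{\tpn(x)-1}\tilde f_k(\hat T^\ell(x,0))=\Bir_k(f)(x)$, and character decomposition of a locally constant $\hat f:I\times\mathbb Z/p\to\mathbb C$ identifies $\bigoplus_k\Bir_k$ with the full first-return Birkhoff-sum map $\Psi:\mathbb C^{dp}\to\mathbb C^{p(d-1)+1}$.

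Surjectivity of $\Psi$ will follow by exhibiting an explicit preimage of $\chi_J$ for each interval $J$ of $\spn$. For $J=T^{-j}(I_\alpha)$ with $\alpha\in\mathcal A_t$ and $0\le j\le p-1$, the $\hat T$-orbit of $(y,0)$ with $y\in J$ visits the cell $I_\alpha\times\{jn_{\alpha_t}\bmod p\}$ at time $j$ (all intermediate iterates lie in $I_{\alpha_t}$ since $\lambda\in\Delta_p$), and this cell is visited by no other first-return orbit because $p\nmid n_{\alpha_t}$ makes the residues $\{jn_{\alpha_t}\bmod p\}_{j=0}^{p-1}$ pairwise distinct. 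Hence $\Psi(\chi_{I_\alpha\times\{jn_{\alpha_t}\}})=\chi_{T^{-j}(I_\alpha)}$; and since $\Psi(\chi_{I\times\{0\}})=\chi_I$ (only the starting position of each first-return orbit lies in $I\times\{0\}$), the indicator of the remaining piece $\tilde I$ is obtained by subtraction.

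To pin down $\ker\Bir_k$ for $1\le k\le p-1$, I would evaluate $\Bir_k(f)(x)$ piecewise. Setting $\omega:=\exp(2\pi ikn_{\alpha_t}/p)$, a primitive $p$-th root of unity by hypothesis, on $\tilde I$ the orbit stays in $I_{\alpha_t}$, so $\Bir_k(f)(x)=f_{\alpha_t}\sum_{\ell=0}^{p-1}\omega^\ell=0$ automatically, imposing no constraint. On $T^{-j}(I_\alpha)$ the orbit passes through $I_\alpha$ exactly once (at time $j$) and through $I_{\alpha_t}$ at all other times; summing the two resulting geometric series and using $n_\alpha+m_\alpha n_{\alpha_t}\equiv 0\pmod p$ collapses $\Bir_k(f)(x)$ to
\[
\omega^j\Bigl(f_\alpha+f_{\alpha_t}\cdot\tfrac{1-\omega^{-m_\alpha}}{\omega-1}\Bigr).
\]
The step I expect to need the most care is verifying that the same expression is obtained in both cases of Lemma \ref{(p,n)-iterate}, which eventually reduces to the cancellation $\omega^p=1$ absorbing the extra $p$ iterations of Case~2. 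Consequently vanishing on all intervals $T^{-j}(I_\alpha)$ reduces to a single linear constraint per $\alpha\in\mathcal A_t$, namely $f_\alpha=f_{\alpha_t}(\omega^{-m_\alpha}-1)/(\omega-1)$, so $\ker\Bir_k$ is the one-dimensional subspace parameterized by $f_{\alpha_t}\in\mathbb C$.
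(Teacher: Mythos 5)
Your proof is correct, and for the surjectivity the underlying mechanism is the same as the paper's: the hypothesis $p\nmid n_{\alpha_t}$ makes the phases $\exp(2\pi i jk n_{\alpha_t}/p)$ attached to the intervals $T^{-j}(I_\alpha)$ form an invertible character table of $\mathbb{Z}/p$. The paper implements this directly: it computes that $\Bir_k(\one_\alpha)$ equals $\exp(2\pi i jk n_{\alpha_t}/p)$ on $T^{-j}(I_\alpha)$ and vanishes elsewhere, invokes Vandermonde invertibility to span all locally constant functions supported on $J_\alpha$, and recovers the last missing direction from the identity $\sum_{k=0}^{p-1}\Bir_k(\one_{\alpha_t})=p\,\one_{\alpha_t}$. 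Your skew product $\hat T(x,m)=(Tx,\,m+\n(x))$ is the Fourier-dual packaging of the same computation — the character decomposition \emph{is} the Vandermonde inversion, and your cells $I_\alpha\times\{jn_{\alpha_t}\}$ are the dual basis — so this part buys conceptual clarity rather than new content. The substantive difference is the kernel statement: the paper's written proof establishes only surjectivity and leaves the one-dimensionality of $\ker\Bir_k$ implicit, whereas your piecewise evaluation $\Bir_k(f)=\omega^{j}\bigl(f_\alpha+f_{\alpha_t}(1-\omega^{-m_\alpha})/(\omega-1)\bigr)$ on $T^{-j}(I_\alpha)$ is correct (I verified both cases of Lemma \ref{(p,n)-iterate}; $\omega^{p}=1$ absorbs the extra $p$ steps exactly as you anticipated, since $\eta:=\exp(2\pi i k n_\alpha/p)=\omega^{-m_\alpha}$) and shows that $\Bir_k(\one_{\alpha_t})$ restricted to each $J_\alpha$ is a scalar multiple of $\Bir_k(\one_\alpha)$, so the kernel is cut out by the $d-1$ independent conditions $f_\alpha=f_{\alpha_t}(\omega^{-m_\alpha}-1)/(\omega-1)$ and is exactly one-dimensional. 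Some such computation (or a proof that the images of the $\Bir_k$ are in direct sum) is genuinely needed for the kernel claim, so on that point your argument is the more complete of the two.
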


\begin{proof} 
Note that $\tpn(x)$ is, by definition, designed so that the total amount of twist along the orbit segment $\{ x, T(x), \ldots, T^{\tpn(x)-1}(x)\}$ is an integer multiple of $2\pi$. Let $\ell$ be a positive integer and for each $0\leq j\leq \ell$ set 

\begin{equation}
    m_j:= \sum_{i=0}^{j-1} \tpn(\spn^{i}(x)), \quad x_j:=\spn^{j}(x)=T^{m_j}(x)
\end{equation}
We now decompose the twisted Birkhoff sum of the function $f$ along the orbit $\{x, T(x), \ldots, T^{m_\ell-1}(x)\}$ into a sum of twisted Birkhoff sums over $\ell$ orbit subsegments $\{T^{m_{j-1}}(x), \ldots, T^{m_{j}-1}(x)\}$, $1 \leq j \leq \ell\}$. 
Clearly, for $m_j\leq n<m_{j+1}$, 
\begin{equation}
    S_n(\zeta_k,x)=S_{n-m_j}(\zeta_k,x_j)+S_{m_j}(\zeta_k,x),
\end{equation}
and since $S_{m_j}(\zeta_k,x)\in \mathbb{Z}$, $\exp(2\pi i S_n(\zeta_k,x))=\exp(2\pi i S_{n-m_j}(\zeta_k,x_j))$. Therefore, 
\begin{align}
    S_{m_\ell}(f, \zeta_k,x)& =\sum\limits_{n=0}^{m_\ell-1} \exp\big(2\pi i S_{n}(\zeta_k,x)\big) f(T^{n}(x))\\
    &= \sum\limits_{r=0}^{\ell-1}\sum\limits_{n=m_r}^{m_{r+1}-1} \exp(2\pi i S_{n-m_r}(\zeta_k,x_r))f(T^{n-m_r}(x_r))\\
    &= \sum\limits_{r=0}^{\ell-1}\Bir_k(f)(\spn^r(x)).
\end{align}

The upshot is that exponential Birkhoff sums of the function $f$ under $T$ with respect to twist parameters $\zeta_k$ at certain special times are equal to the ordinary Birkhoff sums of $\Bir_k(f)$ under $\spn$.

For $\alpha \in \mathcal{A} \setminus \{\alpha_t\}$, we have that $\Bir_k(\one_\alpha)$ is the function that attains the value $\exp\big(2\pi i j \frac{kn_{\alpha_t}}{p}\big)$ on $T^{-j}(I_\alpha)$ for $0 \leq j \leq p-1$ and the value $0$ on the complement of $\bigcup\limits_{j=0}^{p-1} T^{-j}(I_\alpha)$. So by Vandermonde's identity the linear span of the space of functions $\Bir_{k}(\one_\alpha), \; k \in \{0,1, \ldots, p-1\}$ is the space of all locally constant functions (for $\spn$) that are supported on $J_{\alpha}$.

For $\alpha_t$ on the other hand, 
\begin{align}
    \sum \limits_{k=0}^{p-1}\Bir_k(\one_{\alpha_t})(x)&=  \sum \limits_{k=0}^{p-1}\sum\limits_{j=0}^{\tpn(x)-1} \exp\big(2\pi i S_{j}(\zeta_k,x)\big) \one_{\alpha_t}(T^{j}(x))\\
    &= \sum \limits_{j=0}^{\tpn(x)-1}\sum\limits_{k=0}^{p-1} \exp\big(2\pi i S_{j}(\zeta_k,x)\big) \one_{\alpha_t}(T^{j}(x))\\
    &= p \one_{\alpha_t}(x)
\end{align}
Since $I_{\alpha_t}=\bigcup\limits_{j=1}^{p} T^{-j}([0,1)\setminus I_{\alpha_t}) \bigcup \Tilde{I}$, we have   $\one_{\Tilde{I}}=\one_{\alpha_t}-\sum\limits_{\alpha \neq \alpha_t}\sum \limits_{j=1}^{p-1} \one_{T^{-j}(I_{\alpha})}$. Hence, we can express every locally constant function with respect to $\spn$ as a linear combination of functions of the form
$\Bir_k(\one_\alpha), \; \alpha \in \mathcal{A}, \; k \in \{0,1,\ldots, p-1\}$.  
\end{proof}

\subsection{Geometric realization}\label{translation-surface}
Our aim in this subsection is to construct the aforementioned translation surface $X=X(\n, p, \hat{\lambda})$ and realize the one parameter family of IETs $\spn(\f_p(\hat{\lambda}, s), \pi)$ as the Poincaré return maps of the directional translation flows to a certain fixed transversal on $X$.

Let  $\lambda^{*}:=\f_p(\hat{\lambda},\frac{p+1}{2p})$, $\Tilde{\lambda}:= (\frac{1}{2} \hat{\lambda}, \frac{1}{2})$. Then the length of the last interval of the IET $\spn(\lambda^{*}, \pi)$ equals $1/2$. Let $P$ be the square $[0,\sqrt{2}/2] \times [0,\sqrt{2}/2]$. We identify the boundary points of $P$ as follows 
\begin{itemize}
    \item Top and bottom sides: $\big(x,\sqrt{2}/2\big)\sim\big(\sqrt{2}\spn(\frac{x}{\sqrt{2}})-\sqrt{2},0\big)$,
    \item Left and right sides: $\big(0,y\big)\sim \big(\sqrt{2}/2,y\big)$.
\end{itemize}
We denote the translation surface obtained from $P$ via the above side identifications by $X(\n, p, \hat{\lambda})$ (or by $X$ if there is no ambiguity). Indeed, one can view the 
square as a $(2p(d-1)+2)$-gon which has a pair of $p(d-1)$ degenerate sides lined up along the 
top and bottom edges of $P$ (see Figure \ref{fig:surface}). 

By replacing $\spn$ with $T_{\Tilde{\lambda}, \pi}$ in the definition of $X$, we obtain another translation surface which we denote by $Y \label{definition of Y}$. Then the Riemann surface $X$ is a $p$-fold cover of the surface $Y$ ({see item (iii) of Lemma \ref{(p,n)-iterate} and Figure \ref{fig:surface})}. We denote the covering map by $\Pi:X\to Y$.

\begin{figure}[t]
    \centering
    \includegraphics[width=.6\textwidth]{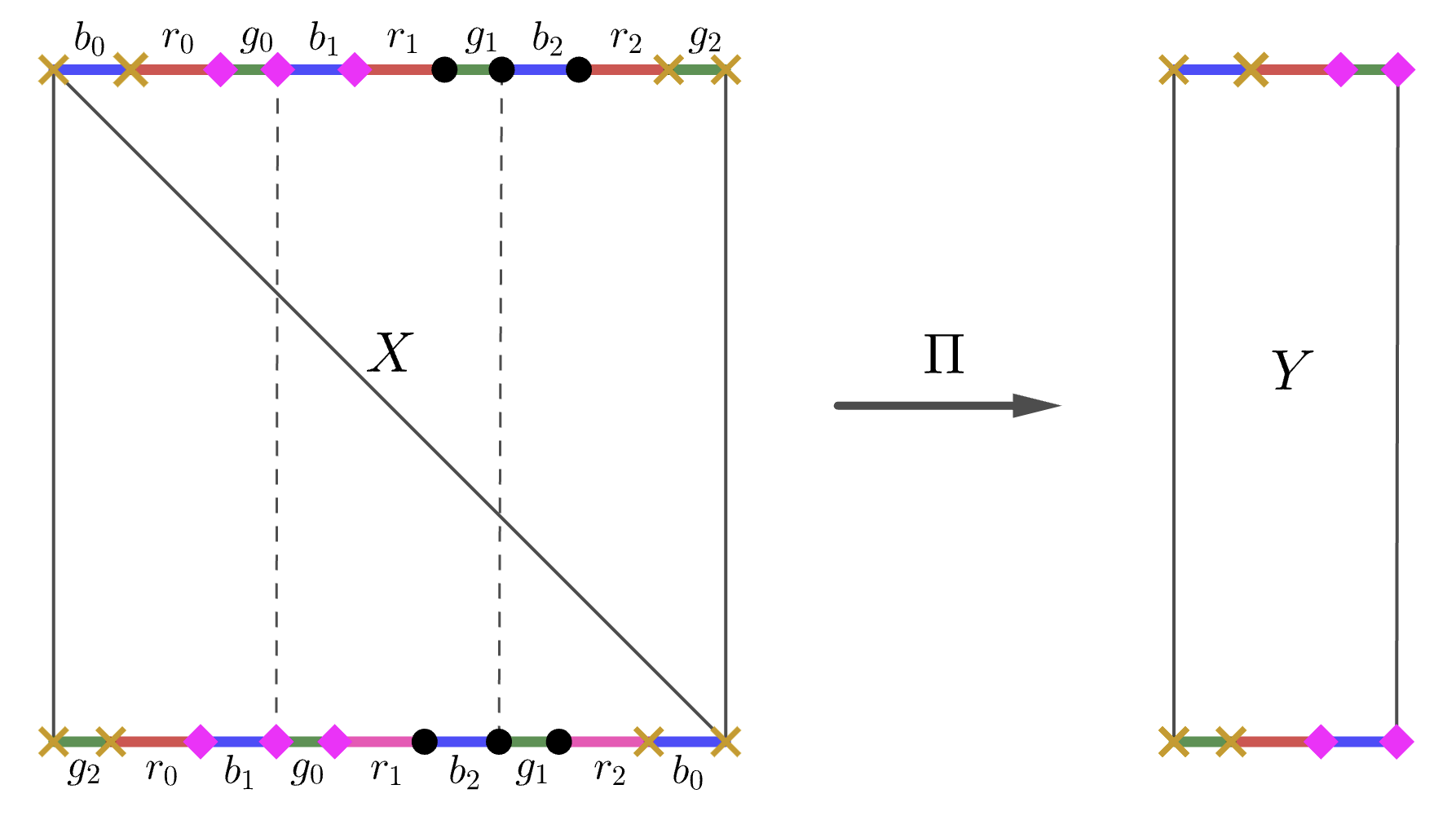}
    \caption{Example of surfaces $X,Y$ for $p=3,d=4$ and $\n=(3,1,2,1)$}
    \label{fig:surface}
\end{figure}

\begin{lemma}(genus bound)\label{genus bound} The genus of the surface $X$ satisfies the following inequalities
\begin{equation}
     p(g-1)+1\leq g(X)\leq p\frac{d}{2}.
\end{equation}
\end{lemma}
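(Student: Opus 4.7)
The plan is to prove the two inequalities by entirely separate methods: Euler characteristic for the upper bound, and the Riemann--Hurwitz formula applied to the cover $\Pi: X \to Y$ for the lower one.

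\emph{Upper bound.} I would view $X$ as the quotient of the square $P$ under the identifications of Subsection \ref{translation-surface}. By Lemma \ref{(p,n)-iterate}(ii), $\spn$ is a $(p(d-1)+1)$-IET, so subdividing the top and bottom of $P$ into the corresponding intervals exhibits $P$ as a polygon with $2p(d-1) + 4$ sides glued in pairs (the $p(d-1)+1$ top/bottom pairs plus the single left/right pair). The quotient therefore has $F = 1$ face, $E = p(d-1) + 2$ edges, and some number $\kappa(X)$ of vertex equivalence classes. Euler's formula $V - E + F = 2 - 2g(X)$ rearranges to
\begin{equation*}
2g(X) + \kappa(X) = p(d-1) + 3.
\end{equation*}
Since $\kappa(X) \geq 1$ and $p \geq 2$ (as $p$ is prime), this yields
\begin{equation*}
g(X) \leq \frac{p(d-1) + 2}{2} \leq \frac{pd}{2}.
\end{equation*}

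\emph{Lower bound.} I would apply Riemann--Hurwitz to the $p$-fold branched cover $\Pi : X \to Y$ constructed in Subsection \ref{translation-surface}. The cover structure is essentially built into the definition of $X$: item (iii) of Lemma \ref{(p,n)-iterate} shows that $\tpn$ is well-defined modulo $p$ on each atom of a natural partition, producing a free $\mathbb{Z}/p$-action on $X$ outside the singular locus whose quotient is $Y$. Riemann--Hurwitz reads
\begin{equation*}
2g(X) - 2 = p\bigl(2g(Y) - 2\bigr) + R,
\end{equation*}
with $R \geq 0$ the total ramification (supported at singularities of $Y$). A parallel polygon Euler computation for $Y$ (built from the $d$-IET $T_{\Tilde{\lambda},\pi}$ by the same recipe) gives $2g(Y) + \kappa(Y) = d + 2$, which, when compared with the Veech formula $d = 2g + \kappa - 1$ for the standard zippered-rectangles surface of $\pi$, identifies $g(Y)$ with $g$ (the square construction introduces exactly one extra fake singularity compared to zippered rectangles, i.e.\ $\kappa(Y) = \kappa + 1$). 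Dropping $R \geq 0$ then gives
\begin{equation*}
g(X) \geq p(g - 1) + 1.
\end{equation*}

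The principal subtle point is justifying that $\Pi$ is a holomorphic degree-$p$ cover of Riemann surfaces, ramified only at the finite singular set; this is where the combinatorial information of Lemma \ref{(p,n)-iterate}(iii) is used in an essential way. Once this is in place, the two bookkeeping steps (polygon Euler characteristic and Riemann--Hurwitz) are entirely routine.
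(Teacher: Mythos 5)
Your upper bound is fine and is essentially the paper's: the authors simply invoke the relation $d'=2g(X)+\kappa(X)-1$ (equation \eqref{genus and singularities}) for the $(p(d-1)+1)$-IET $\spn$, while you re-derive the same count from the Euler characteristic of the glued square; both give $2g(X)\leq p(d-1)+2\leq pd$ since $\kappa(X)\geq 1$ and $p\geq 2$. Your lower bound also follows the paper's stated route (Riemann--Hurwitz for $\Pi:X\to Y$), but it contains a genuine gap at the one point that is not routine bookkeeping: the identification $g(Y)=g$.

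Your justification for $g(Y)=g$ is the assertion that ``the square construction introduces exactly one extra fake singularity compared to zippered rectangles, i.e.\ $\kappa(Y)=\kappa+1$.'' This is not proved, and it is in fact false in general: since your own Euler count gives $2g(Y)+\kappa(Y)=d+2$, the claim $\kappa(Y)=\kappa+1$ is \emph{equivalent} to $g(Y)=g$, so the argument is circular, and a direct check shows the equality can fail. For example, for $\pi=(12345,54321)$ (which satisfies $\pi_b\circ\pi_t^{-1}(d)=1$ and has $g=2$, $\kappa=2$), the side identifications of the square glue all twelve boundary vertices into a single class, so $V=1$, $E=6$, $F=1$ and $g(Y)=3>g$; already for $(123,321)$ one gets $g(Y)=2>g=1$. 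The left/right gluing can create an extra handle rather than an extra marked point. Fortunately the inequality actually needed in Riemann--Hurwitz is only $g(Y)\geq g$, and the failure above goes in the favorable direction. To close the gap, replace the equality by the inequality and prove it directly: the bottom edge of the square is a transversal of the vertical flow on $Y$ whose first-return map is the $d$-IET $T_{\Tilde{\lambda},\pi}$, and the $d$ cycles obtained by closing up the vertical segments over the intervals $I_\alpha$ along this transversal have intersection matrix $\Omega_\pi$, whose rank is $2g$; since the intersection form on $H_1(Y;\mathbb{R})$ is nondegenerate of rank $2g(Y)$, this forces $2g(Y)\geq 2g$. With $g(Y)\geq g$ in hand, dropping the ramification term in $2g(X)-2=p\bigl(2g(Y)-2\bigr)+R$ gives $g(X)\geq p(g-1)+1$ as required; the remaining point you flag (that $\Pi$ is an honest degree-$p$ cover branched only over the singular set) is indeed supplied by the deck transformation $D_p$ used in the proof of Lemma \ref{Order of singularities}.
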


\begin{proof} The upper bound follows from \eqref{genus and singularities}. The lower bound is a consequence of the Riemann-Hurwitz formula applied to the covering map $\Pi:X \to Y$.
\end{proof}
\begin{lemma} \label{Order of singularities} Every conical singularity of order $m$ of the surface $Y$ gives rise to:
\begin{itemize}
    \item[i)] either a singularity of order $p(m+1)-1$,
    \item[ii)] or a family of $p$ singularities each of which of order $m$,  
\end{itemize}
for $X$. 
\end{lemma}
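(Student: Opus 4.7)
The plan is to analyze the branching of the $p$-fold cover $\Pi:X\to Y$ in local normal form near each conical singularity of $Y$. The target dichotomy---either $p$ unramified preimages, or a single totally ramified preimage---is precisely the dichotomy that holds for a Galois cover of prime degree, so the strategy is first to identify the $\mathbb{Z}/p\mathbb{Z}$ structure on $\Pi$ and then to pull back the abelian differential to read off the orders of the preimage singularities.

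I would first exhibit a cyclic deck transformation of order $p$ for $\Pi$. The generator is constructed by cyclically shifting a stopping-time residue that labels preimages: item (iii) of Lemma \ref{(p,n)-iterate} makes this label well-defined on $X$, since $\tpn \bmod p$ depends only on the combinatorial interval of $\spn$ containing the point. Compatibility of the shift with the identifications of Subsection \ref{translation-surface} uses that, by the very definition of $\tpn$, the partial Birkhoff sums of $\n$ along $T$-orbit segments of length $\tpn(x)$ vanish modulo $p$, so the shift descends to the identity on $Y$. This makes $\Pi$ a $\mathbb{Z}/p\mathbb{Z}$-Galois cover, whence the stabilizer of any preimage is a subgroup of $\mathbb{Z}/p\mathbb{Z}$ and, since $p$ is prime, is either trivial or the full group.

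With the Galois structure in place, the cone angles are computed by pulling back the holomorphic $1$-form. A conical singularity $q\in Y$ of order $m$ corresponds to a zero of order $m$ of the abelian differential on $Y$, with cone angle $2\pi(m+1)$. In uniformizing coordinates the cover is locally $w=z^{e}$ with $e\in\{1,p\}$, and the pullback of $w^m\,dw$ is $e\,z^{e(m+1)-1}\,dz$, a zero of order $e(m+1)-1$. Substituting $e=1$ yields $p$ preimages each of order $m$, matching case (ii); substituting $e=p$ yields one preimage of order $p(m+1)-1$, matching case (i). The principal obstacle is the preliminary step of verifying that the candidate shift genuinely descends through the polygonal identifications defining $X$; once $\Pi$ is confirmed to be Galois with group $\mathbb{Z}/p\mathbb{Z}$, the remainder of the argument is a routine application of the local branching formulas for covers of Riemann surfaces.
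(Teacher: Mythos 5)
Your proposal is correct and follows essentially the same route as the paper: the authors realize your order-$p$ deck transformation explicitly as the vertical translation $D_p(x,y)=(x,\,y+\tfrac{\sqrt{2}}{2p})$ (well defined precisely because of item (iii) of Lemma \ref{(p,n)-iterate}), observe that its orbits on the singular set have size $1$ or $p$ since $p$ is prime, and deduce the two cone-angle possibilities $2\pi p(m+1)$ and $2\pi(m+1)$, which is exactly your local computation $e\,z^{e(m+1)-1}\,dz$ with $e\in\{1,p\}$.
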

\begin{proof}
We define the mapping $D_p:X\to X$ as follows
\begin{equation}
    D_p (x, y) = \big(x, y+\frac{\sqrt{2}}{2p}\big),
\end{equation}
where we view the addition operation on the second coordinate  modulo $\frac{\sqrt{2}}{2}$. $D_p$ is a deck transformation for the covering map $\Pi : X \to Y$, and therefore, acts on the set $\Sigma_X$ of singularities of $X$ via permutation. $D_p^p=id$ implies that each orbit of the action of $D_p$ on $\Sigma_X$ consists of either $1$ or $p$ elements. As $D_p$ preserves the abelian differential induced by $dz$ on $X$, the orders of singularities of the elements in a cycle are equal. In case a cycle consists only of one element, the corresponding cone angle is $p$ multiplied by the cone angle of the corresponding singularity in $Y$, i.e, $2\pi(p(m+1))$. Otherwise, the cycle consists of exactly $p$ elements whose corresponding cone angle is equal to $2\pi(m+1)$. It concludes the proof. 
\end{proof}

Let us choose $s(\theta)$ so that the return map of the translation flow in direction $\theta$ (measured as the angle between the given direction and the $y$-axis) on $X$ to the diagonal (connecting the top left corner of $P$ to its lower right corner) is $\spn(\f_p(\hat{\lambda}, s(\theta)), \pi)$. 

\begin{lemma}\label{absolute continuity}
The mapping $\theta \mapsto s(\theta)$ is absolutely continuous. 
    
\end{lemma}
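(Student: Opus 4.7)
The plan is to express $s(\theta)$ explicitly in terms of the return-map data of the direction-$\theta$ translation flow on $X$, and then to exploit the fact that these data vary smoothly with $\theta$ away from a discrete set of ``saddle-connection'' directions.

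First, the parametrization $\f_p(\hat{\lambda},s)=\bigl(\tfrac{s}{p}\hat{\lambda},\,1-\tfrac{s}{p}\bigr)$ shows that the $\alpha_t$-coordinate of the length vector equals $1-s/p$. Hence, letting $\lambda_{\alpha_t}(\theta)$ denote the length of the $\alpha_t$-labelled (largest) subinterval of the IET $\spn(\f_p(\hat\lambda,s(\theta)),\pi)$, which by construction is the Poincar\'e return map of the direction-$\theta$ flow on $X$ to the diagonal $\mathcal{D}$, we obtain the identity
\begin{equation}
 s(\theta)=p\bigl(1-\lambda_{\alpha_t}(\theta)\bigr).
\end{equation}
It therefore suffices to show that $\lambda_{\alpha_t}(\theta)$ is absolutely continuous.

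Second, for $\theta$ in any open interval on which the combinatorial type of the Poincar\'e return map to $\mathcal{D}$ is constant, the endpoints of the subintervals on which the return map acts as a translation are obtained by tracing straight-line trajectories of slope $\cot\theta$ through the polygon $P$, following the prescribed side identifications, until the first re-intersection with $\mathcal{D}$. Each such endpoint is then the solution of a finite sequence of linear equations in $\sin\theta,\cos\theta$ and the (fixed) lengths of the sides of $P$ and their images. Consequently $\lambda_{\alpha_t}(\theta)$, and indeed the whole vector of subinterval lengths, is a real-analytic function of $\theta$ on each such open interval.

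Third, the combinatorial type of the return map can change only at those $\theta$ for which a saddle connection of the direction-$\theta$ flow meets $\mathcal{D}$ at one of its interior breakpoints (equivalently, the forward orbit of a singularity or of an endpoint of $\mathcal{D}$ hits another singularity or endpoint of $\mathcal{D}$). For the compact surface $X$, which has only finitely many singularities and a single finite transversal, this exceptional set of directions is at most countable and accumulates only at directions where the construction degenerates. Between consecutive exceptional angles $\lambda_{\alpha_t}$ is real-analytic with derivative bounded uniformly in terms of the polygon data, while across each exceptional angle it remains continuous, because the length of a subinterval cannot jump under a mere relabelling of the subintervals of the return map. A function that is continuous and piecewise $C^1$ with uniformly bounded derivative on the complement of a countable set is absolutely continuous, which concludes the argument.

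The main obstacle is the bookkeeping in the second step: one must verify that, as $\theta$ varies in the range relevant to the construction, the first-return orbits to $\mathcal{D}$ do realize IETs belonging to the one-parameter family $\spn(\f_p(\hat\lambda,\cdot),\pi)$, so that the extraction of $s(\theta)$ through the formula above is legitimate. This requires a careful analysis of the side identifications of $P$ defining $X$ and confirmation that the combinatorial skeleton of the $(p,\n)$-stopped IET from Lemma~\ref{(p,n)-iterate} is preserved under small perturbations of the flow direction.
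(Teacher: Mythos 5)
Your reduction of the lemma to the regularity of a distinguished subinterval length of the return map is in the right spirit (modulo a labelling slip: the partition defining $\spn$ has no $\alpha_t$-interval; the relevant piece is $\Tilde{I}\subset I_{\alpha_t}$, with $|\Tilde{I}|=1-s$, so the correct extraction is $s(\theta)=1-|\Tilde{I}|(\theta)$ rather than $p\bigl(1-\lambda_{\alpha_t}\bigr)$ read off the return map). But the second half of your argument has two genuine gaps. First, the decomposition ``between consecutive exceptional angles'' presupposes that the set of saddle-connection directions is discrete; on a translation surface it is countable but \emph{dense}, so there are no consecutive exceptional angles and the piecewise picture does not exist as you describe it. (In the present construction this issue is actually moot, but for a reason you do not invoke: by the very definition of $s(\theta)$ the return map lies in the one-parameter family $\spn(\f_p(\hat\lambda,\cdot),\pi)$ for every $\theta$ in the relevant range, so the combinatorics never changes at all.) Second, even granting piecewise real-analyticity, your deduction of absolute continuity rests entirely on the asserted uniform derivative bound, which is never proved; a continuous function that is real-analytic off a countable set but whose derivative blows up near that set need not be absolutely continuous, so this is precisely the point that cannot be waved through.

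Both gaps disappear once one notices that the only quantity to be tracked is $|\Tilde{I}|$ and that it admits a closed form: an elementary law-of-sines computation in the square $P$ gives $|\Tilde{I}|=\sin\theta/(\cos\theta+\sin\theta)$, whence $s(\theta)=1-|\Tilde{I}|=1/(1+\tan\theta)$. This is $C^{1}$ on $(0,\pi/2)$ with $|s'(\theta)|=(\cos\theta+\sin\theta)^{-2}\le 1$, and absolute continuity is immediate. That explicit computation is the paper's entire proof; the piecewise machinery you set up is both more than is needed and, as written, not sound.
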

\begin{proof} It is clear, by the definition of $\f_p$, that
\begin{equation}
    \lambda_{\alpha_t}=1-\frac{s}{p},\quad 
    |\Tilde{I}|=1-p(1-\lambda_{\alpha_t})=1-s.
\end{equation}
By an elementary computation (using the law of sines) we get
\begin{equation}
    |\Tilde{I}|=\frac{\sin(\theta)}{\cos(\theta)+\sin(\theta)},
\end{equation}
and therefore 
\begin{equation}
    s(\theta) = \frac{1}{1+\tan(\theta)},
\end{equation}
which is a $C^{1}$ function over the interval $(0,\frac{\pi}{2})$. Absolute continuity immediately follows. 
\end{proof}

\section{Proof of Main Theorem}\label{sec:proof-main-theorm}

In this section, we apply theorems of \cite{Eskin-Kontsevich-Zorich} and \cite{CE13} to get lower bounds on the growth of Birkhoff sums for IETs corresponding to a leaf of $\f_p$. Then, we deduce exponential growth for the twisted cocycle for almost every parameter on that leaf. Let \[\mathcal{M}=\mathcal{M}(\n, p, \hat{\lambda}):= \overline{SL(2, \mathbb{R}). X(\n, p, 
\hat{\lambda})}.\] 

Henceforth, we fix $\n, p, \lambda$ and drop the subscripts involving 
these parameters. By the seminal work of Eskin, Mirazkhani, and Mohammadi \cite[Theorem 2.1]{Eskin-Mirzakhani-Mohammadi}, $\mathcal{M}$ is an affine invariant submanifold of its ambient moduli space.
 We use an inequality for the sum of Lyapunov exponents of $\mathcal{M}$ derived from \cite[Theorem 1]{Eskin-Kontsevich-Zorich}. The formula given in \cite[Theorem 1]{Eskin-Kontsevich-Zorich} was proved under the assumption of regularity of the $SL(2, \mathbb{R})$-invariant suborbifolds. It was later proved in \cite{Avila_Matheus_Yoccoz} that this assumption holds for every invariant suborbifold (for a more recent stronger version of the result of \cite{Avila_Matheus_Yoccoz}, see \cite{Ben_Dozier}).

\begin{thm} \label{Sum of exponents}(\cite[Theorem 1]{Eskin-Kontsevich-Zorich})
Let $\mathcal{M}$ be any closed connected $SL(2, \mathbb{R})$-invariant suborbifold of some stratum $\mathcal{H}_1( m_1, m_2, \ldots, m_\kappa)$ of Abelian differentials, where $m_1+\cdots+m_\kappa=2g-2$. The top $g$ Lyapunov exponents of the Hodge bundle $H^{1}_{\mathbb{R}}$ over $\mathcal{M}$ along the Teichmüller flow satisfy the following inequality
\begin{equation}
    \theta_1(\mathcal{M})+\cdots+ \theta_g(\mathcal{M}) \geq \frac{1}{12}\sum \limits_{i=1}^{\kappa} \frac{m_i(m_i+2)}{m_i+1}.
\end{equation}
The leading Lyapunov exponent $\theta_1$ is equal to one.
\end{thm}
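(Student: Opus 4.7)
The plan is to follow the Eskin-Kontsevich-Zorich approach, relating the sum of non-negative Lyapunov exponents of the Hodge bundle to explicit geometric quantities on $\mathcal{M}$ by combining the Kontsevich-Forni variational formula with a Siegel-Veech type identity.

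First, I would express the Lyapunov sum as an integral of a pointwise non-negative Hodge-theoretic density. By the Kontsevich-Forni formula, for the $SL(2,\mathbb{R})$-invariant probability measure $\mu$ on $\mathcal{M}$ one has
\[
\theta_1(\mathcal{M}) + \cdots + \theta_g(\mathcal{M}) \;=\; \int_{\mathcal{M}} \Lambda(\omega)\, d\mu(\omega),
\]
where $\Lambda$ measures the infinitesimal growth of the determinant of the Hodge metric along the Teichm\"uller flow, and can be written as the squared Hilbert-Schmidt norm of the second fundamental form of the holomorphic subbundle $H^{1,0}\subset H^1_{\mathbb{C}}$. This identity comes from differentiating $\log\det$ of the Hodge inner product along the geodesic flow and integrating.

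Second, and most delicate, is the explicit evaluation of $\int_{\mathcal{M}}\Lambda\,d\mu$ in terms of combinatorial and geometric data. Using analytic Riemann-Roch on a Deligne-Mumford-type compactification and Stokes' theorem, one localizes the integral at the boundary divisors where surfaces degenerate (shrinking horizontal saddle connections or short cylinders). A local expansion of the Hodge metric in period coordinates near a zero of order $m_i$, modeled by the chart $z\mapsto z^{m_i+1}$, produces a singular contribution of $\tfrac{1}{12}\tfrac{m_i(m_i+2)}{m_i+1}$ for each $i$. The contribution from shrinking cylinders is, by the Siegel-Veech identity, proportional to the area Siegel-Veech constant $c_{\mathrm{area}}(\mathcal{M})$. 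Assembling these contributions yields the exact formula
\[
\theta_1 + \cdots + \theta_g \;=\; \frac{1}{12}\sum_{i=1}^{\kappa}\frac{m_i(m_i+2)}{m_i+1} \;+\; \frac{\pi^2}{3}\, c_{\mathrm{area}}(\mathcal{M}).
\]

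The desired inequality follows immediately from $c_{\mathrm{area}}(\mathcal{M})\geq 0$. The second assertion, $\theta_1=1$, is classical: the tautological plane spanned by the real and imaginary parts of $\omega$ is an $SL(2,\mathbb{R})$-equivariant subbundle of $H^1_{\mathbb{R}}$ on which the Teichm\"uller flow acts diagonally as $\mathrm{diag}(e^t,e^{-t})$, forcing the top Lyapunov exponent to equal $1$. The main obstacle in making the sketch rigorous is the regularity of the invariant measure $\mu$ required to justify both the Siegel-Veech identity and the boundary integration; as noted in the paragraph preceding the theorem, this assumption was proved to hold in full generality in \cite{Avila_Matheus_Yoccoz} (with a sharper version in \cite{Ben_Dozier}), so the inequality applies unconditionally to any closed connected $SL(2,\mathbb{R})$-invariant suborbifold, in particular to the suborbifold $\mathcal{M}(\n,p,\hat\lambda)$ used in the proof of the Main Theorem.
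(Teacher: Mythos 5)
This theorem is not proved in the paper at all: it is quoted verbatim as an external result, namely \cite[Theorem 1]{Eskin-Kontsevich-Zorich}, with the only added remark being that the regularity hypothesis of that paper was later shown to hold unconditionally by \cite{Avila_Matheus_Yoccoz} (and \cite{Ben_Dozier}). Your sketch is a faithful outline of the argument in the cited source --- the Kontsevich--Forni integral formula, the localization yielding the exact identity with the non-negative Siegel--Veech term $\tfrac{\pi^2}{3}c_{\mathrm{area}}(\mathcal{M})$, and the tautological plane forcing $\theta_1=1$ --- so it is consistent with, and fills in, what the paper simply takes as a black box.
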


\begin{prop}{\label{periodic exponent approximation}}
    Let $\pi$ be an irreducible permutation of genus greater than $1$ and $\mu_{Z}$ the corresponding absolutely continuous invariant measure under Zorich renormalization. Then, there exist $c_0, c_1>0$ such that for every large enough prime $p$ and $\mu_{Z}$ almost every $(\lambda, \pi)$ there exists $E_p(\lambda, \pi) \subset Q_p$ with the following properties:
 \begin{itemize}
        \item[i)] (positive exponent) for every $y \in E_p(\lambda, \pi)$, $\tilde{\chi}^{+}(\lambda, \pi, y)$  exists and is larger than or equal to $c_1$,
        \item[ii)] (positive proportion) $\nu_p(E_p(\lambda, \pi))>c_0$,
 \end{itemize}
where $\tilde{\chi}^{+}$ denotes the pointwise top exponent of the twisted cocycle.
\end{prop}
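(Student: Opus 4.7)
The plan is to apply Theorem \ref{Sum of exponents} (Eskin-Kontsevich-Zorich) to $\mathcal{M}(\n, p, \hat\lambda) := \overline{SL(2,\mathbb{R}) \cdot X(\n, p, \hat\lambda)}$ and transfer the resulting lower bound to the twisted cocycle via the untwisting framework of Section \ref{sec: IETs with positive exponent}. Fix a large prime $p$ and $\n \in \mathbb{Z}^d$ with $p \nmid n_{\alpha_t}$. By Lemma \ref{genus bound}, the genus $g_X$ of $X(\n,p,\hat\lambda)$ satisfies $p(g-1)+1 \leq g_X \leq pd/2$, and by Lemma \ref{Order of singularities}, each singularity of $Y$ of order $m$ lifts either to a single singularity of $X$ of order $p(m+1)-1$ or to $p$ singularities each of order $m$. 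A direct computation shows that each non-fake singularity of $Y$ (i.e., one with $m\geq 1$) contributes at least $p$ to $\sum_j \frac{m_j^X(m_j^X+2)}{m_j^X+1}$ in either case; since $g \geq 2$ forces at least one such singularity to exist in $Y$, we deduce $\sum_j \frac{m_j^X(m_j^X+2)}{m_j^X+1} \geq p$, and Theorem \ref{Sum of exponents} gives
\begin{equation*}
\theta_1(\mathcal{M}) + \cdots + \theta_{g_X}(\mathcal{M}) \geq p/12.
\end{equation*}

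By Subsection \ref{translation-surface} and Lemma \ref{absolute continuity}, for a.e.\ $s \in [0,1)$ the IET $\spn(\f_p(\hat\lambda, s), \pi)$ arises as the Poincaré return map of a directional translation flow on a surface in $\mathcal{M}$, so its Zorich cocycle has the same Lyapunov exponents as the KZ cocycle on $\mathcal{M}$. By the Untwisting Lemma \ref{Small kernel surjection} we have the direct-sum decomposition $\mathbb{C}^{p(d-1)+1} = \bigoplus_{k=0}^{p-1} \Bir_k(\mathbb{C}^d)$ of the space of locally constant observables of $\spn$, and each summand is invariant under this Zorich cocycle with restriction conjugate to the twisted Zorich cocycle $\tw^Z(\f_p(\hat\lambda,s), \pi, \zeta_k)$ for $T$ at twist $\zeta_k = k\n/p$. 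Hence the multiset of Zorich exponents of $\spn$ equals the disjoint union over $k$ of the spectra of these twisted cocycles. Since each $\theta_i(\mathcal{M}) \leq \theta_1(\mathcal{M}) = 1$ and $g_X \leq pd/2$, a pigeonhole estimate shows that at least $p/24$ of $\theta_1,\ldots,\theta_{g_X}$ exceed $1/(12d)$; as each twisted cocycle carries at most $d$ exponents, at least $p/(24d)$ values of $k$ yield a top twisted exponent $\tilde\chi^+(\f_p(\hat\lambda,s), \pi, \zeta_k) \geq 1/(12d)=:c_1$. Varying $\n$ and double-counting pairs $(k, \n)$ that parametrize $Q_p$ (each $y \in Q_p$ with $y_{\alpha_t}$ a unit modulo $p$ arises from exactly $p-1$ such pairs) then gives $\nu_p(E_p(\lambda, \pi)) \geq c_0$ for some $c_0>0$ depending only on $\mathfrak{R}$.

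The previous paragraph establishes the conclusion for $\lambda$ in the positive-measure subset $\Delta_p$. To extend to $\mu_Z$-a.e.\ $(\lambda, \pi)$, I invoke the $\toral^Z$-invariance of the pointwise top twisted exponent: the function $(\lambda, \pi) \mapsto \nu_p\!\left(\{y \in Q_p : \tilde\chi^+(\lambda,\pi,y) \geq c_1\}\right)$ is $\mathcal{R}_Z$-invariant, hence $\mu_Z$-a.e.\ constant by ergodicity of $\mathcal{R}_Z$, and the constant is at least $c_0$ by the positive-measure statement just proved. I expect the main obstacle to lie in the identification asserted in the second paragraph: showing that the restriction of the Zorich cocycle for $\spn$ to $\Bir_k(\mathbb{C}^d)$ is genuinely conjugate to $\tw^Z(\cdot, \zeta_k)$ requires carefully matching the Rauzy-Veech renormalization times of $T$ with those of $\spn$ through the stopping time $\tpn$, and verifying that twisted Rauzy-Veech matrices intertwine with $\Bir_k$ on the nose, which will rely on the structure of $\spn$ as an induced map developed in Lemma \ref{(p,n)-iterate}.
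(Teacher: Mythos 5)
Your overall architecture matches the paper's: lower-bound the sum of KZ exponents of $\mathcal{M}(\n,p,\hat\lambda)$ via Theorem \ref{Sum of exponents} together with Lemmas \ref{genus bound} and \ref{Order of singularities}, pigeonhole to find many large exponents, use the untwisting lemma to distribute them among the $p$ twist classes, vary $\n$, and upgrade to $\mu_Z$-a.e.\ $(\lambda,\pi)$ by ergodicity/saturation. The singularity count, the pigeonhole, the double-counting over pairs $(k,\n)$, and the final ergodicity step are all sound.

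The gap is exactly where you suspected, and it is genuine: the claim that $\bigoplus_{k}\Bir_k(\mathbb{C}^d)$ is invariant under ``the Zorich cocycle of $\spn$'' with each block conjugate to $\tw^Z(\cdot,\zeta_k)$ is neither established nor the right vehicle for the transfer. The Rauzy--Veech renormalization of $\spn$ as an abstract $(p(d-1)+1)$-IET runs on its own induction times, which have no reason to respect the tower structure inherited from $T$, so there is no canonical cocycle over $\spn$ whose Lyapunov spectrum is literally the multiset $\{\theta_i(\mathcal{M})\}$ split blockwise among the $\Bir_k(\mathbb{C}^d)$. The paper sidesteps this entirely: it only needs the weaker statement $\Bir_k(\mathbb{C}^d)\not\subset V_c$, where $V_c$ is the Oseledets subspace of observables whose Birkhoff sums under $\spn$ deviate with exponent $<c$; this follows from a pure dimension count ($\dim V_c\leq pd-cg^{*}$ against the direct sum of blocks of dimension $d-1$ supplied by Lemma \ref{Small kernel surjection}), with no invariance of the blocks required. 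Then, for $f$ with $\Bir_k(f)\notin V_c$, the deviation estimate gives $\limsup_N \log|S_N(\Bir_k(f),x)|/\log N\geq c$; since by the proof of Lemma \ref{Small kernel surjection} the sums $S_N(\Bir_k(f),x)$ form a subsequence of the twisted sums $S_m(f,\zeta_k,x)$, Proposition \ref{exponent-comparison} (read in the contrapositive) forces $\limsup_m \frac{1}{m}\log\|\tw^Z_m(\lambda,\pi,\zeta_k)f\|\geq c\chi_1$. This chain --- deviation spectrum, subsequence identity, Proposition \ref{exponent-comparison} --- is the missing mechanism; it also handles the change of time scale between the Teichm\"uller flow on $\mathcal{M}$ and the Zorich renormalization of $T$ (via the factor $\chi_1$), a conversion that your assertion that $\spn$ ``has the same Lyapunov exponents as the KZ cocycle on $\mathcal{M}$'' glosses over.
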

\begin{rmk}\label{periodic exponent approximation for the ergodic measure}
    The same proof shows that the properties above hold true if we replace $\nu_p$ by $\nu_p^{(\pi)}$. 
\end{rmk}

\begin{proof} Note that Oseledet's theorem applied to the measure $\mu_{Z} \times \nu_p$ implies the existence of $\tilde{\chi}^{+}(\lambda, \pi, y)$ for $\mu_Z$ almost every $\lambda$ and every $y \in Q_p$. We assume that $(\lambda, \pi)$ satisfies the full measure conditions of the proof of Proposition \ref{exponent-comparison}, and that for every $y\in Q_p$, $(\lambda, \pi, y)$ is Oseledet regular for the twisted cocycle and the measure $\mu_Z \times \nu_p$. Moreover, we assume $\lambda$ is such that for every $\n \in Q_p$,  $\spn(\lambda, \pi)$ is the return map to the diagonal of the translation flow in an Oseledets regular (see Subsection \ref{basic-ergodic-theory}) direction for the surface $X(\n, p, \hat{\lambda})$ (this is guaranteed by the result of \cite{CE13} regarding Oseledet's regularity of almost every direction in the translation surface $X$ under the Teichmüller geodesic flow on $\mathcal{M}$, and Lemma \ref{absolute continuity}). We now fix $\n \in \mathbb{Z}^d$ and let $X=X(\n, p, \hat{\lambda})$ as before and denote by $g^*$ the genus of $X$. We assume furthermore that $X \in \mathcal{H}_1(m^{*}_1, \ldots, m^{*}_{\kappa^{*}}), Y \in \mathcal{H}_1(m_1, \ldots, m_\kappa)$ (see \ref{translation-surface} for the definition of $Y$). Lemma \ref{Order of singularities} gives the following lower bound
\begin{equation}\label{eq:lower bound 1-12}
    \frac{1}{12} \sum_{j=1}^{\kappa^{*}} \frac{m^{*}_j(m^{*}_j+2)}{m^{*}_j+1} \geq \frac{p}{12} \sum_{i=1}^{\kappa} \frac{m_i(m_i+2)}{m_i+1} \geq \frac{p}{12} \sum_{m_i\geq 1} (m_i+\frac{1}{2}) \geq \frac{p(4g-3)}{24}.
\end{equation}
Theorem \ref{Sum of exponents} and Lemma \ref{genus bound} imply that
\begin{equation}
     \frac{1}{g^{*}}(\theta_1(\mathcal{M})+\cdots+ \theta_{g^{*}}(\mathcal{M})) \geq \frac{g}{12d}=:2c.
\end{equation}
In view of the fact that the largest exponent $\theta_1=1$, we get 
\begin{equation} \label{bound on the portion of exponents}
    \frac{\;\big\vert\{1 \leq j \leq g^* : \theta_j(\mathcal{M}) \geq c\}\big\vert\;}{g*} \geq c.
\end{equation}
Thus, if we let $V_c \,{\mbox{\large$\subset$}} \, \mathbb{C}^{p(d-1)+1}$ be the Oseledet's subspace corresponding to the exponents that are smaller than $c$, then by \eqref{bound on the portion of exponents}, $\dim(V_c) \leq pd-cg^{*}.$ Lemma \ref{Small kernel surjection} implies that the images of $\Bir_k$ for $1\leq k \leq p-1$ form a direct sum of vector subspaces of dimension $d-1$. Therefore, 

\begin{equation}
\frac{1}{p-1}\bigg\vert\Big\{1\leq k \leq p-1: \Bir_k(\mathbb{C}^d){\mbox{\Large$\nsubset$} } \, V_c\Big\}\bigg\vert \geq \frac{1}{p-1}\bigg(1-\frac{\dim (V_c)}{d-1}\bigg)\geq \frac{c(g-1)}{d}=:c^{*},
\end{equation}
where in the last inequality we assume $p$ is large enough and use the lower bound for $g^*$ from Lemma \ref{genus bound}. Let $k$ be such that $\Bir_k(\mathbb{C}^d) \,{\mbox{\large$\nsubset$}} \, V_c$ and $f \in \mathbb{C}^d$ so that $\Bir_k(f) \notin V_c$. Then, the Birkhoff sums of $\Bir_k(f)$ under $\spn(\lambda, \pi)$ satisfy
\begin{equation}
    \limsup_{n \to \infty} \sup_{x\in I} \frac{\log \vert S_n(\Bir_k(f),x)\vert}{\log n} \geq c.
\end{equation}
Note that the sequence $\big\{S_n\big(\Bir_k(f), x\big)\big\}_{n \in \mathbb {N}}$ is by (the proof of) Lemma \ref{Small kernel surjection}, a subsequence of $S_m(f, \zeta_k, x)$. Now, we are in a position to apply Proposition \ref{exponent-comparison} that implies 
\begin{equation}
    \limsup_{m \to \infty} \frac{\log \|\tw^{Z}_m(\lambda, \pi, \zeta_k). f\|}{m} \geq c \chi_1:=c_1.
\end{equation}
It implies that  
\begin{equation}
    \frac{|\{1 \leq k \leq p-1: \tilde{\chi}^{+}(\lambda, \pi, \zeta_k) \geq c_1\}|}{p-1} \geq c^{*}.
\end{equation}
Repeating this argument for $\n \in (\mathbb{Z}/p\mathbb{Z})^{d}$ with $p \nmid n_d$, we get that the set 
\begin{equation}
    E_p(\lambda, \pi):=\big\{y \in Q_p: \tilde{\chi}^{+}(\lambda, \pi, y) \geq c_1 \big\}
\end{equation}
has cardinality at least $c^{*}(p-1)p^{d-1}$ which implies that for some positive $c_0$, $\nu_p(E_p(\lambda, \pi))>c_0$.

As the union of images of $\Delta_p$ under successive iterations of the Zorich transformation 
has full measure in the base $\Delta \times \mathfrak{R}$, we can immediately upgrade the established property about the top exponent to $\mu_Z$-almost every $(\lambda, \pi) \in \Delta \times \mathfrak{R}$. 
\end{proof}
The following is straightforward from Proposition \ref{periodic exponent approximation}.
\begin{cor} 
There exists a constant $c_2>0$ independent of $p$,  such that for every large enough prime $p$,  
\begin{equation}{\label{exponent lower bound for rational points}}
    \tilde{\chi}^{+}_{\mu_Z \times \nu_p} > c_2.
\end{equation}
\end{cor}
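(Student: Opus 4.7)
The plan is a direct Fubini argument that integrates the pointwise lower bound supplied by Proposition \ref{periodic exponent approximation} against the measure $\mu_Z \times \nu_p$. The only subtlety is that the pointwise top exponent on the complement of the sets $E_p(\lambda,\pi)$ must not be allowed to drag the average down, so one must first establish that it is non-negative everywhere.

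First I would recall that, by construction (see Definition \ref{def:twisted-cocycle}), each matrix $\tw^R(\lambda,\pi,\zeta)$ has determinant of unit modulus, and hence so does $\tw^Z_n(\lambda,\pi,\zeta)$ for every $n \geq 1$. Consequently, the sum of the Lyapunov exponents of the twisted cocycle with respect to any invariant probability measure is zero, which forces the pointwise top exponent to satisfy $\tilde{\chi}^+(\lambda,\pi,y) \geq 0$ for $(\mu_Z \times \nu_p)$-almost every $(\lambda,\pi,y)$.

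Next I would express the top Lyapunov exponent as the integral of the pointwise exponent, which is valid because the base dynamics preserves $\mu_Z \times \nu_p$ and $\log^+\|\tw^Z\|$ is integrable (the entries of $\tw^Z$ are, up to unit-modulus phases, the same as those of $B^Z$, whose logarithmic growth is controlled by the standard integrability of the Zorich cocycle). Applying Fubini,
\begin{equation}
\tilde{\chi}^+_{\mu_Z \times \nu_p} \;=\; \int \tilde{\chi}^+(\lambda,\pi,y)\, d(\mu_Z \times \nu_p) \;=\; \int_{\Delta \times \mathfrak{R}} \left( \int_{Q_p} \tilde{\chi}^+(\lambda,\pi,y)\, d\nu_p(y) \right) d\mu_Z(\lambda,\pi).
\end{equation}
Using the non-negativity of $\tilde{\chi}^+$ to discard the contribution from $Q_p \setminus E_p(\lambda,\pi)$, and then invoking Proposition \ref{periodic exponent approximation} parts (i) and (ii), the inner integral is bounded below by $c_1 \nu_p(E_p(\lambda,\pi)) \geq c_1 c_0$ for $\mu_Z$-almost every $(\lambda,\pi)$.

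Integrating against $\mu_Z$ yields $\tilde{\chi}^+_{\mu_Z \times \nu_p} \geq c_0 c_1$, and since both $c_0$ and $c_1$ were produced in Proposition \ref{periodic exponent approximation} without any dependence on $p$, setting $c_2 := c_0 c_1 / 2$ (or any positive constant strictly less than $c_0 c_1$) completes the proof. There is no genuine obstacle here; the content of the corollary is entirely packed into Proposition \ref{periodic exponent approximation}, and this step merely translates the positive-measure pointwise lower bound into an integral lower bound.
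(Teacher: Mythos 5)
Your argument is correct and is exactly the computation the paper leaves implicit when it declares the corollary ``straightforward from Proposition \ref{periodic exponent approximation}'': integrate the pointwise exponent, discard the complement of $E_p(\lambda,\pi)$ using non-negativity (which, as you note, follows from the unit-modulus determinant forcing the exponents to sum to zero), and apply the uniform bounds $c_0,c_1$. No discrepancy with the paper's approach.
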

\begin{proof}[Proof of Main Theorem]    
$\mu_Z \times m_{\mathbb{T}^d}$ is the limit of the measures of the form $\mu_Z \times \nu_p$ (that are invariant under the toral Rauzy-Veech-Zorich transformation) in the weak-$^{*}$ topology on measures. Therefore, by \eqref{exponent lower bound for rational points} and using upper semicontinuity of the top Lyapunov exponent, we get
\begin{equation}
    \tilde{\chi}^{+}_{\mu_Z \times m_{\mathbb{T}^d}} \geq \limsup\limits_{p \to \infty} \tilde{\chi}^{+}_{\mu_Z \times \nu_p} \geq c_2>0.
\end{equation}
The corresponding statement for the ergodic measure $\sum_{\pi \in \mathfrak{R}}\mu_{Z}\big\vert_{\Delta_{\pi}} \times m_{\mathbb{T}^{2g}_{\pi}}$ follows in a similar fashion using Remark \ref{periodic exponent approximation for the ergodic measure}.

\end{proof}
\begin{rmk}
    The same proof works, almost verbatim, if we replace $\mu_Z$ with any other measure $\mu$ induced by an ergodic $SL(2, \mathbb{R})$-invariant measure on the moduli space. In fact, in our proof, we only need that $\mu$ to admit absolutely continuous disintegration with respect to the leaves of the foliation $\f_p$, which correspond to unstable horocycle leaves on the moduli space.  By the celebrated work of Eskin-Mirzakhani \cite[Theorem 1.4]{Eskin-Mirzakhani}, all $SL(2, \mathbb{R})$ ergodic invariant measures are affine, and therefore, admit disintegrations to the foliation by unstable horocycle leaves that are absolutely continuous with respect to the Lebesgue measure, which implies absolute continuity of $\mu$ with respect to the leaves of $\f_p$.  
\end{rmk}

\section{Applications}\label{sec: Applications}
In this section, we provide the applications listed in the introduction. 
\subsection{Kontsevich-Zorich conjecture: an observation}
We first mention a conjecture of Kontsevich and Zorich regarding the limit of the second exponent of the Kontsevich-Zorich cocycle over the moduli space of translation surfaces as the genus goes to infinity. We then construct a family of $SL(2, \mathbb{R})$-invariant submanifolds of genus going to infinity for which the limit of the second exponent is bounded by the ratio of the top exponent of the twisted cocycle to the top exponent of the Rauzy-Veech-Zorich cocycle for a certain permutation on $4$ symbols. In addition, we show that the top exponent of the twisted cocycle is always less than or equal to half of the top exponent of the Rauzy-Veech-Zorich cocycle. 

\begin{conj}\cite[\S 10]{kontsevich-Zorich} For the hyperelliptic components $\mathcal{M}^{\mathcal{H}}_{2g-2}$ and $\mathcal{M}^{\mathcal{H}}_{(g-1,g-1)}$ 
\begin{equation}
    \lim \limits_{g \to \infty} \theta_2 = 1.
\end{equation}
For all other components and other moduli spaces of holomorphic differentials
\begin{equation}
    \lim \limits_{g \to \infty} \theta_2 = \frac{1}{2}
\end{equation} 
\end{conj}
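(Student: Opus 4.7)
The plan is to exploit the covering construction of Section \ref{sec: IETs with positive exponent} together with the twisted cocycle machinery of Section \ref{sec: Twisted Cocycle} to approximate generic strata by explicit $SL(2,\mathbb{R})$-invariant suborbifolds of unbounded genus, and to control $\theta_2$ on them through the top exponent of the twisted cocycle. Concretely, fix a low-genus base surface $Y$ arising from an IET on $4$ symbols, and for each prime $p$ and twist $\n\in\mathbb{Z}^d$ form the $p$-fold cover $X(\n,p,\hat{\lambda})$, whose genus tends to infinity with $p$ by Lemma \ref{genus bound}. The Hodge bundle on the $SL(2,\mathbb{R})$-closure $\mathcal{M}(\n,p,\hat{\lambda})$ decomposes under the deck group $\langle D_p\rangle$ into $p$ isotypic components; on the non-trivial components the Kontsevich--Zorich cocycle is conjugate to the twisted cocycle at the rational twist $\zeta_k=k\n/p$, so that $\theta_2(\mathcal{M})$ is identified with $\tilde{\chi}^{+}(\lambda,\pi,\zeta_k)/\chi_1$ for a suitable $k$.

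For the upper bound, I would first prove the paper's second claimed observation that $\tilde{\chi}^{+}_{\mu_Z\times m_{\mathbb{T}^d}}\le \chi_1/2$ by a symplecticity argument: the twisted matrix at $\zeta$ and at $-\zeta$ are related by an inverse-adjoint symmetry that forces a spectrum symmetric around zero, so the top twisted exponent is bounded by half the sum of the positive exponents of the enveloping Rauzy--Veech--Zorich cocycle. Combined with the isotypic identification above, this yields $\lim_{p\to\infty}\theta_2(\mathcal{M}(\n,p,\hat{\lambda}))\le 1/2$ along the constructed family. Transferring this bound from a distinguished family to arbitrary non-hyperelliptic strata requires a density/semi-continuity argument, for which the Eskin--Mirzakhani--Mohammadi classification of $SL(2,\mathbb{R})$-invariant measures is the natural tool, together with upper semi-continuity of the top Lyapunov exponent as invoked at the end of Section \ref{sec:proof-main-theorm}.

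For the matching lower bound and the hyperelliptic/non-hyperelliptic dichotomy, I would bring in the Eskin--Kontsevich--Zorich formula (Theorem \ref{Sum of exponents}) together with recent large-genus asymptotics of Masur--Veech volumes and Siegel--Veech constants, which pin the normalized sum $(\theta_1+\cdots+\theta_g)/g$ to a computable limit. For hyperelliptic components the hyperelliptic involution splits the Hodge bundle into eigenspaces of fixed small dimension, so $\theta_2$ comes from a bounded-rank subcocycle and the Eskin--Kontsevich--Zorich formula, applied to this invariant subbundle, pins it near $1$; for the other components, the same averaged formula, combined with the uniform bound $\theta_j\le 1$ and the upper bound $\theta_2\le 1/2$ above, would force $\theta_j\to 1/2$ for most $j\le g$. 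The main obstacle is exactly this passage from an averaged statement to a pointwise one: even with sharp control on $\theta_1+\cdots+\theta_g$, pinpointing each individual $\theta_j$ (and in particular the second one) demands an equidistribution-type input on moduli space that is essentially equivalent to the conjecture itself in full generality, and at present is accessible only through heavy arithmetic/combinatorial machinery in specific families such as square-tiled surfaces.
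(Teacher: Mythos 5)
The statement you are trying to prove is a \emph{conjecture}: the paper quotes it verbatim from Kontsevich--Zorich and offers no proof of it. It remains open. What the paper actually establishes (Theorem \ref{Kontsevitch-Zorich-Conjecure}) is a much weaker observation consistent with the conjecture: there exists one explicit family $\mathcal{M}_p$ of invariant submanifolds with genus tending to infinity for which $\limsup_{p\to\infty}\theta_2(\mathcal{M}_p)\le 1/2$. So there is no proof in the paper to compare yours against, and your proposal cannot be judged ``correct'' --- indeed you concede in your last sentence that the passage from the averaged Eskin--Kontsevich--Zorich identity to the individual exponent $\theta_2$ is essentially equivalent to the conjecture itself. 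That concession is accurate, and it is fatal to the proposal as a proof.

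Beyond that global issue, two specific steps would fail even for the weak statement the paper does prove. First, you assert that on the nontrivial isotypic components of the deck group the Kontsevich--Zorich cocycle ``is conjugate to the twisted cocycle at $\zeta_k=k\n/p$, so that $\theta_2(\mathcal{M})$ is identified with $\tilde{\chi}^{+}(\lambda,\pi,\zeta_k)/\chi_1$.'' The paper does not establish any such conjugacy; it only derives the one-sided inequality $\theta_2(\mathcal{M}_p)\le\max\{\chi_2/\chi_1,\max_j\tilde{\chi}^{+}(\lambda_s,\pi,\tfrac{j}{p}\n)/\chi_1\}$ by decomposing an arbitrary mean-zero locally constant function via the untwisting lemma (Lemma \ref{Small kernel surjection}) and comparing growth rates of Birkhoff sums through Proposition \ref{exponent-comparison}. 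Second, bounding $\limsup_p\max_j\tilde{\chi}^{+}(\lambda_s,\pi,\tfrac{j}{p}\n)$ by $\tilde{\chi}^{+}_{\mu_Z\times m_{\mathbb{T}^4}}$ is not automatic: these are pointwise exponents attached to ergodic components of the atomic measures $\mu_Z\times\nu_p$, and one needs Proposition \ref{large-ergodic-components} (a uniform lower bound $m_0$ on the mass of every ergodic component, via Guti\'errez-Romo's finite-index theorem) to force every weak-$*$ limit of these components to equal $\mu_Z\times m_{\mathbb{T}^4}$ before upper semicontinuity applies; your proposal omits this entirely. Finally, your ``density/semi-continuity'' transfer to arbitrary non-hyperelliptic strata, your claimed splitting argument in the hyperelliptic case, and the matching lower bounds are not arguments but restatements of what would need to be proved; upper semicontinuity of Lyapunov exponents goes in the wrong direction for establishing that the limit \emph{equals} $1/2$ or $1$.
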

We need the following proposition that is due to Bufetov and Solomyak.

\begin{prop}(\cite[Corollary 3.8]{Bufetov-Solomyak-spectralcocycle2020}) 
Let $\chi_1$ be the top exponent of the Zorich cocycle (with respect to $\mu_Z$) and $\tilde{\chi}^{+}=\int \tilde{\chi}^+(\lambda,\pi,\zeta)d(\mu_Z\times m_{\mathbb{T}^d})$ be the average of the top Lyapunov exponent of the twisted cocycle with respect to $\mu_Z \times m_{\mathbb{T}^d}$. Then, 
\begin{equation} 
     \tilde{\chi}^{+} \leq \frac{\chi_1}{2}.
\end{equation}
\end{prop}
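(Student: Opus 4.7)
The plan is to bound the second moment in $\zeta$ of $\|\tw^Z_n(\lambda,\pi,\zeta)\|^{2}$ by the entries of the ordinary Zorich cocycle matrix via character orthogonality on $\mathbb{T}^d$, and then convert this $L^2$ comparison into the claimed inequality through Jensen's inequality. The factor $\tfrac12$ in the bound arises precisely as the Parseval square root.

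\textbf{Parseval step.} The essential input is the identification, implicit in the definition of the twisted cocycle and invoked in the proof of Proposition~\ref{exponent-comparison}, of the components of $\tw^Z_n(\lambda,\pi,\zeta)\,f$ with exponential Birkhoff sums over the Rauzy--Veech towers. For every $f \in \mathbb{C}^d$ and every $\alpha \in \mathcal{A}$,
\[
\bigl(\tw^Z_n(\lambda,\pi,\zeta)\,f\bigr)_{\alpha} \;=\; \sum_{k=0}^{h^{(n)}_\alpha-1} e^{2\pi i \langle \zeta,\, v_k \rangle}\, f\bigl(T^k_{\lambda,\pi}\, x^{(n)}_\alpha\bigr),
\]
where $h^{(n)}_\alpha$ is the height of the $\alpha$-tower at the $n$-th Zorich level (so that $h^{(n)} = B^Z_n \cdot \mathbf{1}$), the point $x^{(n)}_\alpha$ lies at the base of this tower, and $v_k := \sum_{j=0}^{k-1} e_{\alpha(T^j x^{(n)}_\alpha)} \in \mathbb{Z}^d_{\ge 0}$ is the visitation vector, which satisfies $\|v_k\|_1 = k$. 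Since the $v_k$'s are therefore pairwise distinct in $\mathbb{Z}^d$, the characters $\zeta \mapsto e^{2\pi i \langle \zeta, v_k\rangle}$ form an orthonormal subset of $L^2(\mathbb{T}^d, m_{\mathbb{T}^d})$, and Parseval gives
\[
\int_{\mathbb{T}^d}\,\bigl|\bigl(\tw^Z_n(\lambda,\pi,\zeta)\, f\bigr)_{\alpha}\bigr|^{2}\,d m_{\mathbb{T}^d}(\zeta) \;=\; \sum_{k=0}^{h^{(n)}_\alpha-1}\bigl|f(T^k x^{(n)}_\alpha)\bigr|^{2} \;\le\; h^{(n)}_\alpha\,\|f\|_\infty^{\,2}.
\]

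\textbf{Conclusion via Jensen and Kingman.} Applying this to $f = \mathbf{e}_j$ for each $j \in \{1,\ldots,d\}$, summing over $j$ and $\alpha$, and using $\|A\|_{\mathrm{op}}^{2} \le \|A\|_F^{\,2} = \sum_j \|A\mathbf{e}_j\|^{2}$ together with $\sum_\alpha h^{(n)}_\alpha \le d^{2}\|B^Z_n(\lambda,\pi)\|$, one obtains
\[
\int_{\mathbb{T}^d} \|\tw^Z_n(\lambda,\pi,\zeta)\|^{2}\, d m_{\mathbb{T}^d}(\zeta) \;\le\; d\, \sum_\alpha h^{(n)}_\alpha \;\le\; d^{3}\,\|B^Z_n(\lambda,\pi)\|.
\]
Jensen's inequality (concavity of $\log$) applied fiberwise in $\zeta$, followed by integration against $\mu_Z$ and division by $n$, then yields
\[
\frac{1}{n}\!\int \log \|\tw^Z_n\|\, d(\mu_Z \times m_{\mathbb{T}^d}) \;\le\; \frac{1}{2n}\!\int \log \|B^Z_n(\lambda,\pi)\|\, d\mu_Z \;+\; \frac{3\log d}{2n}.
\]
Sending $n \to \infty$ and invoking Kingman's subadditive ergodic theorem for both cocycles gives $\tilde\chi^{+} \le \chi_1/2$.

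\textbf{Main obstacle.} The only non-routine ingredient is the tower identification in the first display; this is essentially the raison d'être of the twisted cocycle and can be established by induction on $n$ from the one-step definition of $\tw^R$, carefully tracking how the complex phases accumulate into $e^{2\pi i \langle \zeta, v_k\rangle}$ under the renormalization-theoretic concatenation of tower orbit segments. Once that identification is in place, the whole argument collapses to a few lines of Parseval plus Jensen, and the factor $\tfrac12$ emerges exactly as the Parseval square root.
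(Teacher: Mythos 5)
Your argument is correct. Note that the paper does not prove this proposition at all --- it is quoted with a citation to Bufetov--Solomyak --- and your Parseval-plus-Jensen argument is essentially the proof given in that reference: each entry $(\tw^Z_n)_{\alpha\beta}$ is a sum of $(B^Z_n)_{\alpha\beta}$ unimodular characters $e^{2\pi i\langle\zeta,v_k\rangle}$ with pairwise distinct frequencies $v_k$ (distinct since $\|v_k\|_1=k$), so orthogonality gives $\int_{\mathbb{T}^d}\|\tw^Z_n\|_F^2\,dm=\|B^Z_n\|_1$, and Jensen plus Kingman (integrability coming from $|(\tw^Z_n)_{\alpha\beta}|\le (B^Z_n)_{\alpha\beta}$ and Zorich's $\log$-integrability, nonnegativity of the limit from $|\det\tw^Z|=1$) yields the factor $1/2$. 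The one ingredient you assert rather than derive --- the tower/twisted-Birkhoff-sum formula for the entries of $\tw^Z_n$ --- is exactly the defining property of the twisted cocycle that the paper itself invokes in the proof of Proposition \ref{exponent-comparison}, so this is a legitimate input and not a gap.
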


\begin{thm}\label{Kontsevitch-Zorich-Conjecure} There exists a family of $SL(2, \mathbb{R})$-invariant submanifolds $\mathcal{M}_p$ (for every prime $p$) of surfaces of genus going to infinity such that
\begin{equation}
    \limsup \limits_{p \to \infty} \theta_2(\mathcal{M}_p) \leq 1/2.
\end{equation}
\end{thm}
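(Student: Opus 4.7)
The plan is to build the $\mathcal{M}_p$'s via the cyclic covering construction of Subsection \ref{translation-surface} and to read off $\theta_2(\mathcal{M}_p)$ from the isotypic decomposition of the Hodge bundle that it induces. Fix an irreducible permutation $\pi$ on $4$ symbols satisfying $\pi_b\circ\pi_t^{-1}(d)=1$ whose Rauzy class corresponds to a genus-$2$ stratum; the normalized KZ exponents for such $\pi$ are $\{1,\chi_2(\pi)/\chi_1(\pi)\}$ with $\chi_2(\pi)/\chi_1(\pi)\le 1/2$ (the ratio equals $1/3$ in $\mathcal{H}(2)$ and $1/2$ in $\mathcal{H}(1,1)$). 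For each prime $p$ choose $\n\in\mathbb{Z}^4$ (depending on $p$) with $p\nmid n_{\alpha_t}$ and $\hat{\lambda}\in\Delta^{\mathcal{A}_t}$, and set $\mathcal{M}_p:=\overline{SL(2,\mathbb{R})\cdot X(\n,p,\hat{\lambda})}$. By Eskin--Mirzakhani--Mohammadi each $\mathcal{M}_p$ is an affine invariant submanifold, and by Lemma~\ref{genus bound} its surfaces have genus at least $p+1$, which tends to infinity.

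Next, I would exploit the $\mathbb{Z}/p\mathbb{Z}$-isotypic decomposition of the Hodge bundle induced by the deck group of the covering $X\to Y$: over $\mathcal{M}_p$ one has
\[
H^1(X,\mathbb{C})=H^1_{\mathrm{inv}}\oplus \bigoplus_{k=1}^{p-1}V_k,
\]
where $H^1_{\mathrm{inv}}$ has complex dimension $2g(Y)=4$ and $V_k$ is the isotypic component for the character $j\mapsto e^{2\pi i k/p}$. This decomposition is $SL(2,\mathbb{R})$-invariant, so the KZ Lyapunov spectrum of $\mathcal{M}_p$ splits accordingly. On $H^1_{\mathrm{inv}}$ the cocycle is the pullback of the KZ cocycle over the moduli space of the base $Y$, contributing the normalized pair $\{1,\chi_2(\pi)/\chi_1(\pi)\}$. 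For each $V_k$, the Untwisting Lemma (Lemma~\ref{Small kernel surjection}), together with the fact that $\spn$ is the Poincar\'e return map of the vertical flow on $X$ to the fixed diagonal transversal (Subsection \ref{translation-surface}), identifies the restriction of the Zorich cocycle on $V_k$ at the point $X(\n,p,\hat{\lambda})$ with the twisted Zorich cocycle at the twist $\zeta_k:=(k/p)\n$. Hence the top exponent on $V_k$ equals $\tilde{\chi}^+(\zeta_k)$, and we obtain
\[
\theta_2(\mathcal{M}_p)\;\le\;\max\!\left\{\frac{\chi_2(\pi)}{\chi_1(\pi)},\;\max_{1\le k\le p-1}\frac{\tilde{\chi}^+(\zeta_k)}{\chi_1(\pi)}\right\}.
\]

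The conclusion combines the choice of $\pi$ (which handles the base term) with the Bufetov--Solomyak estimate cited just above the theorem, giving the average bound $\tilde{\chi}^+/\chi_1(\pi)\le 1/2$. The principal obstacle is bridging the gap between the \emph{integrated} Bufetov--Solomyak bound and the \emph{pointwise maximum} $\max_k\tilde{\chi}^+(\zeta_k)$: a priori $\tilde{\chi}^+(\zeta)$ can exceed $\chi_1/2$ on a set of positive Lebesgue measure in $\mathbb{T}^d$ (upper semi-continuity forces $\tilde{\chi}^+(\zeta)\to\chi_1$ as $\zeta\to 0$), so one must either tune $\n=\n(p)$ along a subsequence of primes so that the rational orbit $\{(k/p)\n\bmod\mathbb{Z}^d:1\le k\le p-1\}$ avoids the bad locus $\{\tilde{\chi}^+>\chi_1/2+\varepsilon\}$, or refine the comparison so that the relevant KZ exponent on $V_k$ over the affine manifold $\mathcal{M}_p$ is in fact an averaged quantity, rather than the pointwise twisted exponent at the single $\zeta_k$. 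I would attempt this via a diophantine/equidistribution argument on $\n(p)$, together with Markov's inequality applied to the Bufetov--Solomyak estimate, to deduce $\max_k\tilde{\chi}^+(\zeta_k)\le\chi_1(\pi)/2+o(1)$ as $p\to\infty$ along the subsequence. Combined with $\chi_2(\pi)/\chi_1(\pi)\le 1/2$ this yields $\limsup_p\theta_2(\mathcal{M}_p)\le 1/2$.
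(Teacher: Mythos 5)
Your construction of $\mathcal{M}_p$, the reduction via the Untwisting Lemma to
\[
\theta_2(\mathcal{M}_p)\;\le\;\max\Bigl\{\tfrac{\chi_2}{\chi_1},\;\max_{1\le k\le p-1}\tfrac{\tilde{\chi}^{+}(\zeta_k)}{\chi_1}\Bigr\},
\]
and the use of $\chi_2/\chi_1=1/3$ for the genus-$2$ permutation all match the paper. You also correctly isolate the crux: passing from the \emph{integrated} Bufetov--Solomyak bound $\tilde{\chi}^{+}_{\mu_Z\times m_{\mathbb{T}^4}}\le\chi_1/2$ to the \emph{pointwise maximum} over the rational orbit $\{\zeta_k\}$. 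But your proposed bridge does not work. Markov's inequality only bounds the measure of the bad locus $\{\tilde{\chi}^{+}>\chi_1/2+\varepsilon\}$ away from $1$; it cannot exclude that this locus has positive measure. And if it does, an equidistribution argument on $\n(p)$ is an obstruction rather than a tool: equidistribution of the $p-1$ points $(k/p)\n$ would force roughly a $\delta$-fraction of them \emph{into} a bad set of measure $\delta>0$, whereas your max over $k$ requires that \emph{none} of them land there. No averaging or pigeonhole argument over the choices of $\n$ gives a direction whose entire orbit misses a positive-measure set.

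The paper closes this gap by a different mechanism, which is the real content of the theorem and which your proposal is missing. First, Proposition \ref{large-ergodic-components} (resting on Guti\'errez's theorem that the Rauzy--Veech group has finite index in $Sp(2g,\mathbb{Z})$) shows that $\mu_Z\times\nu_p$ has at most $1/m_0$ ergodic components, each of mass at least $m_0$, \emph{uniformly in $p$}. By Oseledets regularity, each pointwise exponent $\tilde{\chi}^{+}(\lambda_s,\pi,\zeta_k)$ equals the top exponent of one of these finitely many ergodic components, so the problematic max over $k$ becomes a max over a bounded number of components. Arguing by contradiction and passing to subsequences, each component $\mu_{il}$ converges weak-$*$ to an $\mathcal{R}^Z_{\mathbb{T}^4}$-invariant measure $\mu_l$ with $\sum_l c_l\mu_l=\mu_Z\times m_{\mathbb{T}^4}$; ergodicity of the latter forces every $\mu_l$ to equal it. Upper semicontinuity of the top Lyapunov exponent in the invariant measure then gives $\limsup_i\max_l\tilde{\chi}^{+}_{\mu_{il}}\le\tilde{\chi}^{+}_{\mu_Z\times m_{\mathbb{T}^4}}\le\chi_1/2$, which is exactly the bound you need. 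Without the uniform lower bound $m_0$ on the mass of the ergodic components (so that none of them can escape in the limit) this argument collapses, so that proposition is an essential ingredient you would have to supply.
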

\begin{proof} Let $d=4$ and $\pi = \begin{pmatrix}
    1 2 3 4\\
    4 3 2 1
\end{pmatrix}.$ 
For a prime $p$ let $\hat{\lambda} \in \Delta^{\mathcal{A}_t}$ be such that for Lebesgue almost every $s$, $(\lambda_s, \pi):= \mathcal{F}(\hat{\lambda})$ has the following properties
\begin{itemize}
    \item $(\lambda_s,\pi)$ is Oseledets regular (see Subsection \ref{basic-ergodic-theory}) for Rauzy-Veech-Zorich cocycle with respect to $\mu_Z$, 
    \item for all $\zeta \in Q_p$, $(\lambda_s, \pi, \zeta)$ is Oseledets regular for the twisted cocycle with respect to $\mu_Z \times \nu_p$.
\end{itemize}
Let $\mathcal{M}_p:= \overline{SL(2, \mathbb{R}). X(\n, p, 
\hat{\lambda})}$ for some fixed $\n \in Q_p$. Let $s \in [0,1]$ be such that the above two conditions hold and $\spn(\lambda_s, \pi)$ is the return map of the translation flow 
on $X(\n, p, \hat{\lambda})$ in an Oseledets regular direction. Let $f \in \mathbb{C}^{(p-1)d+1}$ be a locally constant function with 
respect to $\spn( \lambda_s, \pi)$ of mean zero. By Lemma \ref{Small kernel surjection}, we can express $f$ as
\begin{equation}
    f= \sum_{j=0}^{p-1} \Bir_j(f_j),
\end{equation}
where $f_j$'s are locally constant functions with respect to $(\lambda_s, \pi)$ and $f_0$ has zero average. Then for every $x\in I$
\begin{equation}
     \limsup\limits_{N\to \infty} \frac{\log \big\vert S_N(f,x)\big\vert}{\log N}  \leq \max_{0\leq j\leq p-1}
     \bigg\{\limsup_{N \to \infty} \frac{\log\big\vert S_N(f_0,x)\big\vert}{\log N}, \ldots, \limsup_{N \to \infty} \frac{\log \big\vert S_N(f_{p-1}, \frac{p-1}{p} \n, x)\big\vert}{\log N}\bigg\},
\end{equation}
which implies that
\begin{equation}
    \theta_2(\mathcal{M}_p)  \leq \max \bigg\{ \frac{\chi_2}{\chi_1}, \max_{1\leq j \leq p-1} \frac{\tilde{\chi}^{+}\big(\lambda_s, \pi, \frac{j}{p}\n\big)}{\chi_1}\bigg\}.
\end{equation}
Thus, we need only show that the limsup of the second quantity in the curly brackets is less than or equal to $1/2$ as by \cite[Corollary 2]{Eskin-Kontsevich-Zorich} we know that $\chi_2/\chi_1=1/3$. 

By Oseledets regularity $\tilde{\chi}^{+}(\lambda_s, \pi, \frac{j}{p}\n)$ is the top exponent of the twisted cocycle with respect to one of the 
ergodic components of $\mu_Z \times \nu_p$ of which there are only $1/m_0$ many by Proposition  \ref{large-ergodic-components} (in this case $d=2g=4$). 

For the sake of contradiction, assume that 
\begin{equation}
    \limsup\limits_{p\to \infty} \theta_2(\mathcal{M}_{p}) >\frac12.
\end{equation}
Then, we may take a subsequence $\{p_i\}_{i=1}^\infty$ for which the limit exists and is bigger than 1/2 with the property that the number of ergodic components of the measures $\mu_Z \times \nu_{p_i}$ is constant equal to $k< \frac{1}{m_0}$. For the sake of simplicity of the notation, we denote this subsequence by $\{p_i\}$ as well. Denote by $\mu_{i,l}$, $1 \leq l \leq k$ the ergodic components 
of $\mu_Z \times \nu_{p_i}$. Then by Proposition  \ref{large-ergodic-components}, 
we have
\begin{equation}
    \mu_Z \times \nu_{p_{i}}= \sum_{l=1}^{k} c_{il} \mu_{il}
\end{equation}
where $1 \geq c_{il} \geq 1/m_0$ for every $l \in \{ 1, \ldots, k\}$. 
Therefore, by taking further subsequences, we can ensure that there are positive numbers $c_1,\ldots,c_k$ summing to $1$, and probability measures $\mu_1,\ldots,\mu_k$ such that $c_{il}\to c_l$ and $\mu_{il}\to \mu_l$  for every $1 \leq l \leq k$ as $i$ tends to infinity. Thus, we have
\begin{equation}
     \mu_Z \times m_{\mathbb{T}^4}= \lim_{i \to \infty} \mu_Z \times \nu_{p_{i}}= \sum_{l=1}^{k}c_l \mu_l.
\end{equation}
Since the limit of the measures $\mu_Z \times \nu_{p_{i}}$ is the ergodic measure $\mu_Z \times m_{T^{4}}$ and each $\mu_l$ is invariant under $\mathcal{R}^{Z}_{\mathbb{T}^4}$, we get  
\begin{equation}
    \mu_l= \mu_Z \times m_{\mathbb{T}^4}
\end{equation}
for every $l \in \{1, \ldots, k\}$. Thus, the upper semicontinuity of the top Lyapunov exponent yields that 
\begin{equation}
    \limsup\limits_{p\to \infty} \max_{1 \leq j \leq p-1}\tilde{\chi}^{+}\big(\lambda_s, \pi, \frac{j}{p}\n\big)= \limsup_{i \to \infty} \max_{1\leq l \leq k} \tilde{\chi}^{+}_{\mu_{il}} \leq \tilde{\chi}^{+}_{\mu_Z \times m_{\mathbb{T}^4}} \leq \frac{1}{2} \chi_1,
\end{equation}
which is a contradiction.
\end{proof}

\subsection{Discrepancy modulo primes}
\begin{definition} For an interval exchange transformation $T:I \to I$ and a subinterval $J \subset I$, we define the $n$-th step discrepancy of $J$ under $T$ by
\begin{equation}
    D_{J}(T, n):= \sup_{x \in I} |S_n(\one_J, x)-\int_{I} S_n(\one_J, y) dy|,
\end{equation}
where we recall that $S_n(f, x)$ stands for the Birkhoff sum under $T$ of the function $f$ evaluated at $x$. 
\end{definition}
\begin{thm}
    Let $\pi$ be an irreducible permutation of genus greater than $1$. Then, there exists a constant $\chi_{disc}>0$ such that for a $\mu_Z$-full measure set $\Delta_{disc} \subset \Delta$ and for every large enough prime $p$ and every $\lambda \in \Delta_{disc}$, we have
    \begin{equation}
        \limsup \limits_{n\to \infty} \sup\limits_{0\leq a<b\leq 1}\frac{\log |D_{[a,b]}(T_{\lambda, \pi}^{p}, n)|}{\log n}\geq \chi_{disc}.
    \end{equation}
\end{thm}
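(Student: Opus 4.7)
\smallskip

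The plan is to reduce the discrepancy of $T^p_{\lambda,\pi}$ to the classical discrepancy of $T_{\lambda,\pi}$ via an elementary orbit decomposition, and then to invoke the Zorich--Forni deviation theorem for IETs. The target constant will be $\chi_{disc}:=\chi_2(\mathfrak{R})/\chi_1(\mathfrak{R})>0$, positive since $g>1$ by \cite{ForniDeviation,AvilaVianaSimplicity}.

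First, I would establish the elementary inequality
\begin{equation}\label{eq:elem-plan-final}
D_{[a,b]}(T^p_{\lambda,\pi}, N)\;\geq\;\frac{1}{p}\, D_{[a,b]}(T_{\lambda,\pi}, pN),
\end{equation}
obtained by decomposing the orbit segment $\{T^m x\}_{m=0}^{pN-1}$ according to residues modulo $p$ into $p$ orbit segments for $T^p$. Concretely, for any function $f$,
$$S_{pN}(f,x;T)=\sum_{r=0}^{p-1} S_N(f,T^r x;T^p),$$
and applied to $f=\mathbb{1}_{[a,b]}$, subtracting $pN(b-a)$ from both sides, the triangle inequality and the supremum over $x\in I$ yield \eqref{eq:elem-plan-final}.

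Second, I would invoke Forni's deviation theorem \cite{ForniDeviation} combined with the Avila--Viana simplicity result \cite{AvilaVianaSimplicity}: under $g>1$, $\chi_2(\mathfrak{R})>0$, and for $\mu_Z$-a.e. $(\lambda,\pi)$ and Lebesgue-a.e. subinterval $[a_0,b_0]\subset[0,1]$,
$$\limsup_{n\to\infty}\frac{\log D_{[a_0,b_0]}(T_{\lambda,\pi}, n)}{\log n}=\chi_{disc}.$$
Let $\Delta_{disc}$ be the corresponding $\mu_Z$-full-measure set. To reconcile Forni's $\limsup$ (attained along some subsequence $\{M_k\}\subset\mathbb{N}$) with the multiples of $p$ required by \eqref{eq:elem-plan-final}, I would use subadditivity $D(T,N+M)\leq D(T,N)+D(T,M)$ (which follows from $S_{N+M}=S_N+S_M\circ T^N$) together with the trivial bound $D(T,r)\leq r$. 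Writing $M_k=pN_k+r_k$ with $0\leq r_k<p$,
$$D_{[a_0,b_0]}(T, pN_k)\;\geq\; D_{[a_0,b_0]}(T, M_k)-D_{[a_0,b_0]}(T, r_k)\;\geq\; M_k^{\chi_{disc}-\epsilon}-p,$$
which exceeds $(pN_k)^{\chi_{disc}-\epsilon}/2$ for $k$ large since $M_k\geq pN_k$. Combining with \eqref{eq:elem-plan-final}, $D_{[a_0,b_0]}(T^p, N_k)\geq (pN_k)^{\chi_{disc}-\epsilon}/(2p)$, and letting $\epsilon\to 0$ concludes the argument with $\chi_{disc}$ uniform in $p$.

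A sharper alternative, more in the spirit of the present paper and paralleling Proposition \ref{periodic exponent approximation}, would use the translation surface $X(\mathbf{1},p,\hat{\lambda})$---noting that for $\n=\mathbf{1}$ the stopping time satisfies $\tpn\equiv p$, so $\spn=T^p$---together with the Eskin--Kontsevich--Zorich sum-of-exponents bound on $\overline{SL(2,\mathbb{R})\cdot X(\mathbf{1},p,\hat{\lambda})}$, yielding a potentially larger $\chi_{disc}\sim\frac{4g-3}{12d}$ uniform in $p$. The main obstacle in this sharper approach is that the suspension construction requires $\lambda\in\Delta_p$ (i.e.\ $\lambda_{\alpha_t}>(p-1)/p$), so one would additionally need to extend the bound from $\Delta_p$ to $\mu_Z$-a.e.\ $\lambda\in\Delta$ (perhaps via a Rauzy--Veech renormalization or $SL(2,\mathbb{R})$-invariance argument); the elementary approach circumvents this difficulty.
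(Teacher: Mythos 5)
Your elementary argument is correct, and it takes a genuinely different route from the paper. The paper stays inside its twisted-cocycle framework: it takes the twist $\zeta_p=(1/p,\dots,1/p)$ (i.e.\ $\n=\mathbf{1}$, precisely the case you identify in your ``sharper alternative'', where $\spn=T^p$), extracts from the proof of Proposition \ref{periodic exponent approximation} a polynomial lower bound $n^{\chi_{disc}}$ on the twisted Birkhoff sums $S_n(\one_{I_\alpha},\zeta_p,x)$ for some letter $\alpha$, and then converts this into a discrepancy bound via the identity
\begin{equation*}
|S_n(\one_{I_\alpha},\zeta_p,x)|\;\le\;\sum_{j=0}^{p-1}\Big|\sum_{s=0}^{k-1}\one_{I_\alpha}(T^{j+sp}x)-k|I_\alpha|\Big|+r\;\le\;p\,D_{I_\alpha}(T^p,k)+r,
\end{equation*}
where the mean can be inserted for free because $\sum_{j=0}^{p-1}e^{2\pi i j/p}=0$; the constant $\chi_{disc}$ is the uniform-in-$p$ exponent coming from the Eskin--Kontsevich--Zorich bound on $X(\mathbf{1},p,\hat\lambda)$. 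Your proof bypasses all of this: the inequality $D_J(T^p,N)\ge \tfrac1p D_J(T,pN)$ plus Zorich's deviation theorem already gives the result, with the arguably more natural constant $\chi_2/\chi_1$, for \emph{every} integer $p$ (primality and ``large enough'' are not needed), and with a visibly $p$-independent full-measure set; the subadditivity trick for passing from the subsequence $M_k$ to multiples of $p$ is fine. Two small points: the deviation input should be attributed to Zorich \cite{Zorich97deviation} (completed by \cite{ForniDeviation,AvilaVianaSimplicity}) rather than to Forni's flow theorem, and you should not claim equality for Lebesgue-a.e.\ subinterval --- what the theorem supplies, and all you need given the supremum over $[a,b]$, is one exchanged interval $I_\alpha$ whose occupation numbers deviate with exponent $\ge\chi_2/\chi_1$. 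The trade-off is that your route leans on the full Zorich--Forni--Avila--Viana deviation package, whereas the paper's route only uses its own suspension machinery and the EKZ lower bound; for $d=4$ your constant $1/3$ beats the paper's, but in general the two constants are not comparable.
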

\begin{proof} 
Note that by (the proof of) Proposition \ref{periodic exponent approximation} and \ref{eq:lower bound 1-12} we get that there exists a positive exponent $\chi_{disc}>0$ such that for every large enough prime $p$ $\tilde{\chi}^{+}_{\mu_Z \times \nu_p}> \chi_{disc}$ (to be precise, this inequality holds for the average of the exponent along any fixed line corresponding to $\n$). Let $\zeta_p:= (1/p, \ldots, 1/p)$. Then there exists a subset $\Delta_{disc} \subset \Delta$ of full $\mu_Z$-measure such that for every $\lambda \in \Delta_{disc}$ and every $p\geq 5$, there exists a symbol $\alpha \in \mathcal{A}$ such that
\begin{equation}
    \limsup_{n \to \infty}\sup_{x\in I}\frac{|S_n(\one_\alpha, \zeta_p, x)|}{\log n} > \chi_{disc}
\end{equation}
Let $n$ be large enough so that 
\begin{equation}
    |S_n(\one_\alpha, \zeta_p, x)| \geq n^{\chi_{disc}}
\end{equation}
for some $x\in I$. Let $n=pk+r$ for some $k\in \mathbb{N}$ and $0\leq r \leq p-1$. Then, one can rewrite
\begin{align}
    |S_n(\one_\alpha, \zeta_p, x)| &\leq \sum_{j=0}^{p-1}\Big|\exp\big(\frac{2\pi is }{p}\big)\bigg(\sum_{s=0}^{k-1}\one_{\alpha} (T^{j+sp}(x))-k |I_{\alpha}|\bigg)\Big|+r\leq \\
    & \leq p D_{I_\alpha}(T^{p}, k) +r.
\end{align}
It implies the desired result.
\end{proof}

\subsection{Spectral dimension}
\begin{definition}(\it Spectral measure) Let $T_t: (X, \mu) \to (X, \mu)$ be a measure preserving flow and $f\in L^2(X, \mu)$ an observable. The spectral measure corresponding to $f$ is the unique measure $\sigma_f$ on $\mathbb{R}$ with the property that
\begin{equation}
    \hat{\sigma}_f(-\xi) =  \int f\circ T_t(x)f(x) d\mu(x),
\end{equation}
where $\hat{\sigma}_f(-\xi)= \int \exp(2\pi i t\xi) d\sigma_f(t)$ is the Fourier coefficient of $\sigma_f$ at phase $\xi$. 
\end{definition}
\begin{definition}(\it Local dimension of a measure) Let $\sigma$  be a finite measure on a metric space $X$. Then, the lower local dimension of $\sigma$ at $\omega$, denoted by $\underline{d}(\sigma, \omega)$, is defined as
\begin{equation}
    \underline{d}(\sigma, \omega):= \liminf_{r \to 0} \frac{\log \big\vert\sigma(B_r(\omega))\big\vert}{\log r},
\end{equation}
where $B_r(\omega)$ denotes the ball of radius $r$ centered on $\omega$. 
\end{definition}

Our main result, along with \cite[Theorem 3.6]{Bufetov-Solomyak-spectralcocycle2020}, implies the following.
\begin{cor} Let $\pi$ be an irreducible permutation of genus greater than $1$. Then, for almost every $(\lambda, \pi, h) \in \Delta \times \mathfrak{R} \times \mathbb{R}^{d}_{+}$ there exists a function $f$ defined on the suspension space obtained by $T_{\lambda, \pi}$ and the roof function $h$ such that for almost every $s\in \mathbb{R}$

\begin{equation}
    \underline{d}(\sigma_f, s)= 2- \frac{2\tilde{\chi}^{+}}{\chi_1} \geq 1.
\end{equation}
The same result holds true for almost every $(\lambda, \pi, h) \in \Delta \times \mathfrak{R} \times H^{+}(\pi)$. 
\end{cor}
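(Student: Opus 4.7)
The plan is to deduce this corollary directly from the combination of \cite[Theorem 3.6]{Bufetov-Solomyak-spectralcocycle2020}, the Main Theorem, and the upper bound $\tilde{\chi}^{+} \leq \chi_1/2$ recalled in the previous subsection. The Bufetov--Solomyak theorem, under the standing hypothesis that the top Lyapunov exponent of the twisted cocycle is positive, produces, for a generic IET and a generic roof function $h$, an observable $f$ on the associated suspension flow whose spectral measure satisfies the exact formula $\underline{d}(\sigma_f, s) = 2 - 2\tilde{\chi}^{+}/\chi_1$ for Lebesgue-almost every $s \in \mathbb{R}$. The first step is to verify that its positivity hypothesis holds on a full $\mu_Z$-measure set of parameters. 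This is exactly the content of the Main Theorem, applied to the invariant measure $\mu_Z \times m_{\mathbb{T}^d}$ on $\Delta \times \mathfrak{R} \times \mathbb{T}^d$.

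Once the formula is in hand, the inequality part $2 - 2\tilde{\chi}^{+}/\chi_1 \geq 1$ reduces to the bound $2\tilde{\chi}^{+} \leq \chi_1$, which is \cite[Corollary 3.8]{Bufetov-Solomyak-spectralcocycle2020}, stated as a separate proposition in the previous subsection. Combining these three ingredients---the Bufetov--Solomyak spectral formula, the positivity of $\tilde{\chi}^{+}$ from the Main Theorem, and the upper bound $\tilde{\chi}^{+} \leq \chi_1/2$---yields the first assertion.

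For the second claim, concerning $(\lambda, \pi, h) \in \Delta \times \mathfrak{R} \times H^{+}(\pi)$, the argument is essentially unchanged: one replaces the invariant measure $\mu_Z \times m_{\mathbb{T}^d}$ throughout by the second natural invariant measure $\sum_{\pi \in \mathfrak{R}} \mu_Z\big\vert_{\Delta_\pi} \times m_{\mathbb{T}^{2g}_\pi}$, whose fiberwise disintegration is supported on the symplectic subtori $\mathbb{T}^{2g}_\pi$ corresponding to the subspaces $H(\pi)$, and therefore naturally parametrizes roof functions in $H^{+}(\pi)$. Since the Main Theorem supplies the positivity of $\tilde{\chi}^{+}$ for this second measure as well, the Bufetov--Solomyak formula applies in that setting too and yields the analogous conclusion, now for almost every $h \in H^{+}(\pi)$.

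I do not anticipate any substantive obstacle in this corollary: the genuinely hard input, namely the positivity of the top Lyapunov exponent of the twisted cocycle with respect to both invariant measures, has already been established in the Main Theorem, and the remainder amounts to assembling the three named ingredients and keeping track of which of the two invariant measures governs each of the two assertions.
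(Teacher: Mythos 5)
Your proposal is correct and matches the paper's own (very brief) justification: the corollary is stated as an immediate consequence of the Main Theorem combined with \cite[Theorem 3.6]{Bufetov-Solomyak-spectralcocycle2020}, with the inequality coming from the bound $\tilde{\chi}^{+}\leq \chi_1/2$ and the second assertion handled by the second invariant measure, exactly as you describe.
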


\addcontentsline{toc}{section}{References}
     \bibliographystyle{alphaurl}

\begin{thebibliography}{BGMV03}

\bibitem[AD16]{AvilaDelecroixweak}
Artur Avila and Vincent Delecroix.
\newblock Weak mixing directions in non-arithmetic {V}eech surfaces.
\newblock {\em J. Amer. Math. Soc.}, 29(4):1167--1208, 2016.
\newblock \href {https://doi.org/10.1090/jams/856}
  {\path{doi:10.1090/jams/856}}.

\bibitem[ADZ23]{Avila-Damanik-Zhang}
Artur Avila, David Damanik, and Zhenghe Zhang.
\newblock Schr{\"o}dinger operators with potentials generated by hyperbolic
  transformations. {I}: {Positivity} of the {Lyapunov} exponent.
\newblock {\em Invent. Math.}, 231(2):851--927, 2023.
\newblock \href {https://doi.org/10.1007/s00222-022-01157-2}
  {\path{doi:10.1007/s00222-022-01157-2}}.

\bibitem[AF07]{AvilaForniweak}
Artur Avila and Giovanni Forni.
\newblock Weak mixing for interval exchange transformations and translation
  flows.
\newblock {\em Ann. of Math. (2)}, 165(2):637--664, 2007.
\newblock \href {https://doi.org/10.4007/annals.2007.165.637}
  {\path{doi:10.4007/annals.2007.165.637}}.

\bibitem[AF08]{AthreyaForni}
Jayadev~S. Athreya and Giovanni Forni.
\newblock Deviation of ergodic averages for rational polygonal billiards.
\newblock {\em Duke Math. J.}, 144(2):285--319, 2008.
\newblock \href {https://doi.org/10.1215/00127094-2008-037}
  {\path{doi:10.1215/00127094-2008-037}}.

\bibitem[AFS23]{AvilaForniSafaee}
Artur Avila, Giovanni Forni, and Pedram Safaee.
\newblock Quantitative weak mixing for interval exchange transformations.
\newblock {\em Geom. Funct. Anal.}, 33(1):1--56, 2023.
\newblock \href {https://doi.org/10.1007/s00039-023-00625-y}
  {\path{doi:10.1007/s00039-023-00625-y}}.

\bibitem[AMY13]{Avila_Matheus_Yoccoz}
Artur Avila, Carlos Matheus, and {Jean-Christophe} {Yoccoz}.
\newblock {SL(2, {R})}-invariant probability measures on the moduli spaces of
  translation surfaces are regular.
\newblock {\em Geometric and Functional Analysis}, 23:1705--1729, 2013.
\newblock \href {https://doi.org/10.1007/s00039-013-0244-5}
  {\path{doi:10.1007/s00039-013-0244-5}}.

\bibitem[ASV13]{AvilaVianaSantamaria}
Artur Avila, Jimmy Santamaria, and Marcelo Viana.
\newblock Holonomy invariance: rough regularity and applications to {Lyapunov}
  exponents.
\newblock In {\em Cocycles over partially hyperbolic maps}, number 358 in
  Ast\'erisque. Soci\'et\'e math\'ematique de France, 2013.
\newblock URL: \url{http://www.numdam.org/item/AST_2013__358__13_0/}.

\bibitem[AV07]{AvilaVianaSimplicity}
Artur Avila and Marcelo Viana.
\newblock {Simplicity of Lyapunov spectra: proof of the Zorich-Kontsevich
  conjecture}.
\newblock {\em Acta Mathematica}, 198(1):1 -- 56, 2007.
\newblock \href {https://doi.org/10.1007/s11511-007-0012-1}
  {\path{doi:10.1007/s11511-007-0012-1}}.

\bibitem[AV10]{invariance-principle}
Artur Avila and Marcelo Viana.
\newblock Extremal {Lyapunov} exponents: an invariance principle and
  applications.
\newblock {\em Invent. Math.}, 181(1):115--178, 2010.
\newblock \href {https://doi.org/10.1007/s00222-010-0243-1}
  {\path{doi:10.1007/s00222-010-0243-1}}.

\bibitem[Avi23]{Avila2023kam}
Artur Avila.
\newblock Kam, Lyapunov exponents, and the Spectral Dichotomy for typical
  one-frequency Schrodinger operators, 2023.
\newblock \href {http://arxiv.org/abs/2307.11071} {\path{arXiv:2307.11071}}.

\bibitem[BFH22]{Brown-Fisher-Hurtado}
Aaron Brown, David Fisher, and Sebastian Hurtado.
\newblock Zimmer's conjecture: Subexponential growth, measure rigidity, and
  strong property ({T}).
\newblock {\em Ann. Math. (2)}, 196(3):891--940, 2022.
\newblock \href {https://doi.org/10.4007/annals.2022.196.3.1}
  {\path{doi:10.4007/annals.2022.196.3.1}}.

\bibitem[BGMV03]{BonattiVianaGomezMont}
Christian Bonatti, Xavier Gómez-Mont, and Marcelo Viana.
\newblock Généricité d'exposants de Lyapunov non-nuls pour des produits
  déterministes de matrices.
\newblock {\em Annales de l'I.H.P. Analyse non linéaire}, 20(4):579--624,
  2003.
\newblock \href {https://doi.org/10.1016/S0294-1449(02)00019-7}
  {\path{doi:10.1016/S0294-1449(02)00019-7}}.

\bibitem[BS18]{BufetovSolomyakHoldergenustwo}
Alexander~I. Bufetov and Boris Solomyak.
\newblock The {H}\"{o}lder property for the spectrum of translation flows in
  genus two.
\newblock {\em Israel J. Math.}, 223(1):205--259, 2018.
\newblock \href {https://doi.org/10.1007/s11856-017-1614-8}
  {\path{doi:10.1007/s11856-017-1614-8}}.

\bibitem[BS20]{Bufetov-Solomyak-spectralcocycle2020}
Alexander~I. Bufetov and Boris Solomyak.
\newblock A spectral cocycle for substitution systems and translation flows.
\newblock {\em J. Anal. Math.}, 141(1):165--205, 2020.
\newblock \href {https://doi.org/10.1007/s11854-020-0127-2}
  {\path{doi:10.1007/s11854-020-0127-2}}.

\bibitem[BS21]{BufetovSolomyak-HolderRegularity}
Alexander~I. Bufetov and Boris Solomyak.
\newblock H{\"o}lder regularity for the spectrum of translation flows.
\newblock {\em J. {\'E}c. Polytech., Math.}, 8:279--310, 2021.
\newblock \href {https://doi.org/10.5802/jep.146} {\path{doi:10.5802/jep.146}}.

\bibitem[BS23]{BufetovSolomyak-selfsimilarity}
Alexander~I. Bufetov and Boris Solomyak.
\newblock Self-similarity and spectral theory: on the spectrum of
  substitutions.
\newblock {\em St. Petersbg. Math. J.}, 34(3):313--346, 2023.
\newblock \href {https://doi.org/10.1090/spmj/1756}
  {\path{doi:10.1090/spmj/1756}}.

\bibitem[CE13]{CE13}
Jon Chaika and Alex Eskin.
\newblock Every flat surface is Birkhoff and Oseledets generic in almost every
  direction.
\newblock {\em Journal of Modern Dynamics}, 9, 05 2013.
\newblock \href {https://doi.org/10.3934/jmd.2015.9.1}
  {\path{doi:10.3934/jmd.2015.9.1}}.

\bibitem[Cha12]{Chaika-Annals}
Jon Chaika.
\newblock Every ergodic transformation is disjoint from almost every interval
  exchange transformation.
\newblock {\em Ann. Math. (2)}, 175(1):237--253, 2012.
\newblock \href {https://doi.org/10.4007/annals.2012.175.1.6}
  {\path{doi:10.4007/annals.2012.175.1.6}}.

\bibitem[Dol04]{Dolgopyat_2004}
Dmitry Dolgopyat.
\newblock Limit theorems for partially hyperbolic systems.
\newblock {\em Trans. Am. Math. Soc.}, 356(4):1637--1689, 2004.
\newblock \href {https://doi.org/10.1090/S0002-9947-03-03335-X}
  {\path{doi:10.1090/S0002-9947-03-03335-X}}.

\bibitem[Doz23]{Ben_Dozier}
Benjamin Dozier.
\newblock Measure bound for translation surfaces with short saddle connections.
\newblock {\em Geometric and Functional Analysis}, 33:421--467, 2023.
\newblock \href {https://doi.org/10.1007/s00039-023-00636-9}
  {\path{doi:10.1007/s00039-023-00636-9}}.

\bibitem[EKZ14]{Eskin-Kontsevich-Zorich}
Alex Eskin, Maxim Kontsevich, and Anton Zorich.
\newblock Sum of {Lyapunov} exponents of the {Hodge} bundle with respect to the
  {Teichm\"uller} geodesic flow.
\newblock {\em Publications Math\'ematiques de l'IH\'ES}, 120:207--333, 2014.
\newblock \href {https://doi.org/10.1007/s10240-013-0060-3}
  {\path{doi:10.1007/s10240-013-0060-3}}.

\bibitem[EM18]{Eskin-Mirzakhani}
Alex Eskin and Maryam Mirzakhani.
\newblock Invariant and stationary measures for the
  {{\(\mathrm{SL}(2,\mathbb{R})\)}} action on moduli space.
\newblock {\em Publ. Math., Inst. Hautes {\'E}tud. Sci.}, 127:95--324, 2018.
\newblock \href {https://doi.org/10.1007/s10240-018-0099-2}
  {\path{doi:10.1007/s10240-018-0099-2}}.

\bibitem[EMM15]{Eskin-Mirzakhani-Mohammadi}
Alex Eskin, Maryam Mirzakhani, and Amir Mohammadi.
\newblock Isolation, equidistribution, and orbit closures for the
  {{\(\mathrm{SL}(2,\mathbb{R})\)}} action on moduli space.
\newblock {\em Ann. Math. (2)}, 182(2):673--721, 2015.
\newblock \href {https://doi.org/10.4007/annals.2015.182.2.7}
  {\path{doi:10.4007/annals.2015.182.2.7}}.

\bibitem[FH19]{Hubert-Ferenczi-2019}
Sébastien Ferenczi and Pascal Hubert.
\newblock Rigidity of square-tiled interval exchange transformations.
\newblock {\em Journal of Modern Dynamics}, 14(0):153--177, 2019.
\newblock \href {https://doi.org/10.3934/jmd.2019006}
  {\path{doi:10.3934/jmd.2019006}}.

\bibitem[FM14]{Forni-Matheus}
Giovanni Forni and Carlos Matheus.
\newblock Introduction to {Teichm{\"u}ller} theory and its applications to
  dynamics of interval exchange transformations, flows on surfaces and
  billiards.
\newblock {\em J. Mod. Dyn.}, 8(3-4):271--436, 2014.
\newblock \href {https://doi.org/10.3934/jmd.2014.8.271}
  {\path{doi:10.3934/jmd.2014.8.271}}.

\bibitem[For02]{ForniDeviation}
Giovanni Forni.
\newblock Deviation of ergodic averages for area-preserving flows on surfaces
  of higher genus.
\newblock {\em Ann. of Math. (2)}, 155(1):1--103, 2002.
\newblock \href {https://doi.org/10.2307/3062150} {\path{doi:10.2307/3062150}}.

\bibitem[{For}22]{ForniTwisted}
Giovanni {Forni}.
\newblock {Twisted Translation Flows and Effective Weak Mixing}.
\newblock {\em J. Eur. Math. Soc.}, 24(12):4225--4276, 2022.
\newblock \href {https://doi.org/10.4171/JEMS/1186}
  {\path{doi:10.4171/JEMS/1186}}.

\bibitem[Fur63]{Furstenberg}
Hillel Furstenberg.
\newblock Noncommuting random products.
\newblock {\em Trans. Am. Math. Soc.}, 108:377--428, 1963.
\newblock \href {https://doi.org/10.2307/1993589} {\path{doi:10.2307/1993589}}.

\bibitem[GR19]{Gutierrez}
Rodolfo Guti{\'e}rrez-Romo.
\newblock Classification of {Rauzy}-{Veech} groups: proof of the {Zorich}
  conjecture.
\newblock {\em Invent. Math.}, 215(3):741--778, 2019.
\newblock \href {https://doi.org/10.1007/s00222-018-0836-7}
  {\path{doi:10.1007/s00222-018-0836-7}}.

\bibitem[Her83]{Herman}
Michael~R. Herman.
\newblock Une m{\'e}thode pour minorer les exposants de {Lyapounov} et quelques
  exemples montrant le caract{\`e}re local d'un th{\'e}or{\`e}me d'{Arnold} et
  de {Moser} sur le tore de dimension 2.
\newblock {\em Comment. Math. Helv.}, 58:453--502, 1983.
\newblock \href {https://doi.org/10.1007/BF02564647}
  {\path{doi:10.1007/BF02564647}}.

\bibitem[Kat80]{Katoknomixing}
Anatole Katok.
\newblock Interval exchange transformations and some special flows are not
  mixing.
\newblock {\em Israel J. Math.}, 35(4):301--310, 1980.
\newblock \href {https://doi.org/10.1007/BF02760655}
  {\path{doi:10.1007/BF02760655}}.

\bibitem[Kea75]{KeaneMinimal}
Michael Keane.
\newblock Interval exchange transformations.
\newblock {\em Math. Z.}, 141:25--31, 1975.
\newblock \href {https://doi.org/10.1007/BF01236981}
  {\path{doi:10.1007/BF01236981}}.

\bibitem[KS67]{KatokStepinWeak}
Anatole~B. Katok and Anatoly~M. Stepin.
\newblock Approximations in ergodic theory.
\newblock {\em Uspehi Mat. Nauk}, 22(5 (137)):81--106, 1967.

\bibitem[KZ97]{kontsevich-Zorich}
Maxim Kontsevich and Anton Zorich.
\newblock Lyapunov exponents and Hodge theory, 1997.
\newblock \href {http://arxiv.org/abs/hep-th/9701164}
  {\path{arXiv:hep-th/9701164}}.

\bibitem[Led86]{Ledrappier}
François Ledrappier.
\newblock Positivity of the exponent for stationary sequences of matrices.
\newblock In Ludwig Arnold and Volker Wihstutz, editors, {\em Lyapunov
  Exponents}, pages 56--73, Berlin, Heidelberg, 1986. Springer Berlin
  Heidelberg.

\bibitem[Mas82]{MasurIETMeasuredFoliations}
Howard Masur.
\newblock Interval exchange transformations and measured foliations.
\newblock {\em Ann. Math. (2)}, 115:169--200, 1982.
\newblock \href {https://doi.org/10.2307/1971341} {\path{doi:10.2307/1971341}}.

\bibitem[MM20]{marshallmaldonado2020modulus}
Juan Marshall-Maldonado.
\newblock Modulus of continuity for spectral measures of suspension flows over
  salem type substitutions, 2020.
\newblock \href {http://arxiv.org/abs/2009.13607} {\path{arXiv:2009.13607}}.

\bibitem[MM22]{marshallmaldonado2022lyapunov}
Juan Marshall-Maldonado.
\newblock Lyapunov exponents of the spectral cocycle for topological factors of
  bijective substitutions on two letters, 2022.
\newblock \href {http://arxiv.org/abs/2210.11982} {\path{arXiv:2210.11982}}.

\bibitem[MMY05]{MMYRothtype}
Stefano Marmi, Pierre Moussa, and Jean-Christophe Yoccoz.
\newblock The cohomological equation for roth-type interval exchange maps.
\newblock {\em Journal of the American Mathematical Society}, 18(4):823--872,
  2005.
\newblock \href {https://doi.org/10.1090/S0894-0347-05-00490-X}
  {\path{doi:10.1090/S0894-0347-05-00490-X}}.

\bibitem[Rau79]{rauzy1979echanges}
G{\'e}rard Rauzy.
\newblock Echanges d'intervalles et transformations induites.
\newblock {\em Acta Arithmetica}, 34(4):315--328, 1979.
\newblock \href {https://doi.org/10.4064/aa-34-4-315-328}
  {\path{doi:10.4064/aa-34-4-315-328}}.

\bibitem[Vee82]{VeechGaussmeasure}
William~A. Veech.
\newblock Gauss measures for transformations on the space of interval exchange
  maps.
\newblock {\em Ann. Math. (2)}, 115:201--242, 1982.
\newblock \href {https://doi.org/10.2307/1971391} {\path{doi:10.2307/1971391}}.

\bibitem[Via08a]{VianaNonvanishing_of_exponents}
Marcelo Viana.
\newblock Almost all cocycles over any hyperbolic system have nonvanishing
  Lyapunov exponents.
\newblock {\em Annals of Mathematics}, 167(2):643--680, 2008.
\newblock \href {https://doi.org/10.4007/annals.2008.167.643}
  {\path{doi:10.4007/annals.2008.167.643}}.

\bibitem[Via08b]{viana2008dynamics}
Marcelo Viana.
\newblock Dynamics of interval exchange transformations and teichm{\"u}ller
  flows.
\newblock {\em available at https://www.mat.univie.ac.at/~bruin/ietf.pdf},
  2008.

\bibitem[Via14]{VianaLyapunovExponentsBook}
Marcelo Viana.
\newblock {\em Lectures on Lyapunov Exponents}.
\newblock Cambridge Studies in Advanced Mathematics. Cambridge University
  Press, 2014.
\newblock \href {https://doi.org/10.1017/CBO9781139976602}
  {\path{doi:10.1017/CBO9781139976602}}.

\bibitem[VO16]{Viana-Oliveira}
Marcelo Viana and Krerley Oliveira.
\newblock {\em Foundations of ergodic theory}, volume 151 of {\em Camb. Stud.
  Adv. Math.}
\newblock Cambridge: Cambridge University Press, 2016.
\newblock \href {https://doi.org/10.1017/CBO9781316422601}
  {\path{doi:10.1017/CBO9781316422601}}.

\bibitem[Wil17]{Wilkinson-What-are-Lyapunov}
Amie Wilkinson.
\newblock What are {Lyapunov} exponents, and why are they interesting?
\newblock {\em Bull. Am. Math. Soc., New Ser.}, 54(1):79--105, 2017.
\newblock \href {https://doi.org/10.1090/bull/1552}
  {\path{doi:10.1090/bull/1552}}.

\bibitem[Wri15]{Wright-translation-surface}
Alex Wright.
\newblock Translation surfaces and their orbit closures: an introduction for a
  broad audience.
\newblock {\em EMS Surv. Math. Sci.}, 2(1):63--108, 2015.
\newblock \href {https://doi.org/10.4171/EMSS/9} {\path{doi:10.4171/EMSS/9}}.

\bibitem[Yoc10]{yoccoz2007interval}
Jean-Christophe Yoccoz.
\newblock Interval exchange maps and translation surfaces.
\newblock {\em Homogeneous flows, moduli spaces and arithmetic (Clay
  Mathematics Proceedings)}, 10:1--69, 2010.

\bibitem[Zor96]{ZorichGaussmeasure}
Anton Zorich.
\newblock Finite Gauss measure on the space of interval exchange
  transformations. Lyapunov exponents.
\newblock {\em Annales de l'Institut Fourier}, 46(2):325--370, 1996.
\newblock \href {https://doi.org/10.5802/aif.1517}
  {\path{doi:10.5802/aif.1517}}.

\bibitem[Zor97]{Zorich97deviation}
Anton Zorich.
\newblock Deviation for interval exchange transformations.
\newblock {\em Ergodic Theory and Dynamical Systems}, 17(6):1477--1499, 1997.
\newblock \href {https://doi.org/10.1017/S0143385797086215}
  {\path{doi:10.1017/S0143385797086215}}.

\bibitem[Zor06]{zorich2006flat}
Anton Zorich.
\newblock Flat Surfaces, 2006.
\newblock \href {http://arxiv.org/abs/math/0609392}
  {\path{arXiv:math/0609392}}.

\end{thebibliography}

\end{document}